\numberwithin{equation}{section}  % Formato (sección.número)
\definecolor{verdeoscuro}{rgb}{0,0.3,0.2}
\definecolor{verdeclaro}{rgb}{0.93,1,0.9}
\definecolor{azulclaro}{rgb}{0.01,0.5,0.8}
\definecolor{gris}{rgb}{0.5,0.5,0.7}
\definecolor{celeste}{rgb}{0.02,0.7,0.9}
\definecolor{marron}{rgb}{0.3,0.1,0.0}
\def\tcv{\textcolor{violet}}
\def\tcb{\textcolor{blue}}
\theoremstyle{definition}
\newtheorem{theorem}{Theorem}[section]
\newtheorem{prop}[theorem]{Proposition}
\newtheorem{Lem}[theorem]{Lemma}
\newtheorem{lem}[theorem]{Lemma}
\newtheorem{Cor}[theorem]{Corollary}
\newtheorem{coro}[theorem]{Corollary}
\newtheorem{Teor}[theorem]{Theorem}
\newtheorem{teo}[theorem]{Theorem}
\newtheorem{Def}{Definition}[section]
\newtheorem{defi}{Definition}[section]
\newtheorem{Rem}{Remark}
\newtheorem{rem}{Remark}
\newtheorem{nota}{Notation}
\newtheorem{ex}[theorem]{Example}
\newenvironment{Proof}{\noindent{\it Proof.}}{\hfill$\Diamond$}
\DeclareMathAlphabet{\pazocal}{OMS}{zplm}{m}{n}
\newcommand{\Z}{\mathbb{Z}}
\newcommand{\C}{\mathbb{C}}
\def\b{\mathtt{b}}
\def\d{\mathtt{d}}
\def\Innder{\mathrm{Innder}}
\def\k{\kappa}
\def\To{\Rightarrow}
\def\ad{\mathrm{ad}}
\def\ot{\otimes}
\def\wt{\widetilde}
\newcommand{\Traza}{\operatorname{tr}}
\newcommand{\killing}{\kappa}
\newcommand{\End}{\operatorname{End}}
\newcommand{\sJac}{\operatorname{sJac}}
\newcommand{\Tag}{\operatorname{TAG}}
\newcommand{\Tkk}{\operatorname{TKK}}
\newcommand{\T}{\operatorname{T}}
\def\InnDer{\mathrm{InnDer}}
\DeclareMathOperator{\Img}{Im}
\DeclareMathOperator{\Ker}{Ker}
\DeclareMathOperator{\Hom}{Hom}
\DeclareMathOperator{\Der}{Der}
\newcommand{\act}[1]{(#1)}
\newcommand{\hdot}{{\raise1.2pt\hbox to0.3em{\Large $\dot$}}} 
\newcommand{\id}{\mathrm{id}}
\def\Mod{\ \mathrm{Mod}\ }
\def\id{\mathrm{Id}}
\def\Id{\mathrm{Id}}
\def\sl{\mathfrak{sl}}
\def\gl{\mathfrak{gl}}
\newcommand{\g}{\mathfrak{g}}
\newcommand{\h}{\mathfrak{h}}
\newcommand{\A}{\mathfrak{A}}
\newcommand{\B}{\mathfrak{B}}
\newcommand{\osp}{\mathfrak{osp}}
\newcommand{\Sl}{\mathfrak{sl}}
\newcommand{\SL}{\mathrm{SL}}
\newcommand{\D}{\mathfrak{D}}
\newcommand{\J}{\mathfrak{J}}
\newcommand{\cJ}{\mathcal{J}}
\newcommand{\M}{\mathcal{M}}
\newcommand{\ShL}{\sl_2{\text{-\bf SS}}}
\newcommand{\VShL}{\sl_2{\text{-\bf VSS}}}
\newcommand{\ShsL}{\sl_2{\text{-\bf supSS}}}
\newcommand{\VShsL}{\sl_2{\text{-\bf supVSS}}}
\newcommand{\Jsa}{{\text{\bf SJor}}}
\newcommand{\JterS}{{\text{\bf J-STer}}}
\def\rj{\rangle_{\!{}_\J}}
\def\rg{\rangle_{\!\!{}_\g}}
\def\rjm{\rangle _{\!\!{}_{\J,\M}}}
\def\pjm{\partial^{{}^{\J,\M}}}
\def\pg{\partial^{{}^{\g}}}
\author{Gonzalo Gutierrez and Marco Farinati}
\author{Marco A. Farinati
\thanks{Dpto de Matem\'atica FCEyN UBA - IMAS (Conicet). 
e-mail: mfarinat@dm.uba.ar.
} \ and
Gonzalo Gutierrez
\thanks{Dpto de Matem\'atica FCEyN UBA. 
e-mail: gonzalounsafcex@gmail.com 
Both authors are
partially supported by 
UBACyT and
 PICT 2022-05-00026:
Grupos cu\'anticos, categor\'ias trenzadas e 
invariantes de nudos.}
 }
\title{The Tits construction for \textit{short} $\sl_2$-super-structures}
\begin{document}

\maketitle

\begin{abstract}

    In this paper, we generalize the Tits construction for Lie superalgebras such that $\sl_2$ acts by even derivations and decompose, as $\sl_2$-module, into a direct sum of copies of the adjoint, the natural and  the trivial representations. 
    This construction generalizes the one provided by Elduque et al in \cite{EBCC23}, and it is possible to described the $\sl_2$-Lie superstructure in terms of $\J$-ternary superalgebras as a super version of the defined by Allison. We extend the Tits-Kantor-Koecher construction and the Tits-Allison-Gao functor that define a short $\sl_2$-Lie superalgebra from a $\J$-ternary superalgebra $(\J,\M)$. 
    Our setting includes and generalizes both results of \cite{EBCC23} and \cite{S22}.
    
\end{abstract}

{\small
\tableofcontents
}

\section*{Introduction}\label{Sec: Introd}

Consider the Lie algebra $\sl_2=\sl_2(\C)$ with the standard basis $e=
\big(\begin{smallmatrix}
0&1\\
0&0
\end{smallmatrix}\big)$, $f=
\big(\begin{smallmatrix}
0&0\\
1&0
\end{smallmatrix}\big)$, $h=
\big(\begin{smallmatrix}
1&0\\
0&-1
\end{smallmatrix}\big)$ and brackets $[e,f] =h$, $[h,e] = 2e$, $[h,f] = -2f$ and let
$
V=\C e_1 \oplus \C e_2 \cong \C^2
$
be the irreducible 2-dimensional representation of $\sl_2$.
If $\g$ is a Lie algebra that admits an action of $\sl_2$ by derivations, then one can decompose
$\g$ as an $\sl_2$-module
\[
\g\cong \bigoplus_{n\geq 0} V_{(n)}\ot \C^{m_n}
\]
and the bracket of $\g$
\[
[-,-]:\g\ot\g\to\g
\]
will be a map of $\sl_2$-modules.
This simple observation impose serious restrictions on the possible $\sl_2$-linear brackets
that makes one think that a possible characterization of $\sl_2$-Lie algebras is reasonable.
A very convenient and classical situation is when 
the $\sl_2$-module structure is of the form
\begin{align}
    \g \cong \sl_2\ot J \ \oplus\  D
\end{align}
where $J$ is just a multiplicity vector space and $D=\g^{\sl_2}$. That is, the only $\sl_2$-representations
that appears in $\g$ as $\sl_2$ module is the adjoint $\sl_2=\sl_2^{\ad}=V_{(2)}$ and the trivial one $V_{(0)}$. If this is the case,
we call $\g$ a {\bf very short} $\sl_2$-algebra.
%If $\sl_2$ occur in $\g$ as Lie subalgebra then we say that $\g$ is \textbf{inner} and, in this case, $J$ contain a distinguished element $1$ such that $\sl_2\ot 1 \cong \sl_2$.

A classical result of \cite{Tits} known as the \textit{``Tits construction''} shows
that if a Lie algebra $\g$ is very short, then it determines a structure of a Jordan algebra on $J$ 
such that $D$ acts by Jordan-derivations on $J$.
This Jordanian properties also determines the Lie structure on $\g = \sl_2\ot J\oplus \Innder(J)$, where $\Innder(J)$
is the Lie algebra of "inner derivations" of $J$, and 
Tits uses this construction to produce exceptional Lie algebras from some exotic Jordan ones.

A generalization to the super case is clear. A super Lie algebra $\g $ is called
$\sl_2$-superalgebra if $\sl_2$ acts by even derivations of $\wt{\g}$, and
we say that an $\sl_2$-Lie {\bf superalgebra} $\g $ is  \textbf{very short} if, as $\sl_2$-module,
\begin{align}\label{eq: very short sl2-Lie-super}
    \g  \cong \sl_2\ot \J  \ \oplus\ \D
\end{align}
where $\J = J_0 \oplus J_1$ is a super vector space with trivial $\sl_2$ action
 and $\D = D_0 \oplus D_1=\g ^{\sl_2}$.

In \cite{BC18}, \cite{Ka19} and \cite{S22}, this (super) situation is considered;  they
mention two types of constructions, knowing in the  
literature as the $\Tkk$-construction and the $\Tag$-construction, which define two {\em very short} Lie superalgebras $\Tkk(\J)$ and $\Tag(\J)$ from a Jordan superalgebra $\J$

\[\Tkk(\J):=\sl_2\ot \J\oplus\Innder(\J),\ \ 
\Tag(\J):=\sl_2\ot \J\oplus \B^s(\J)
\]
In the $\Tkk$ case,
the brackets are defined in terms of the Jordan multiplication, the Killing form, 
and the natural action of $\Innder(\J)$ by derivations on $\J$, generalizing to the super case the
classical Tits construction. In case of $\Tag(\J)$, a (super) Lie algebra $\B^s(\J)$ is introduce
in order to give a functorial replacement of $\Innder(\J)$, the name $\Tag$ comes from the
"Tits-Allison-Gao" functor. This $\Tag$ functor is adjoint to the Tits functor $\T$, that assign  a (super) Jordan algebra from a very short (super) $\sl_2$-algebra (see \cite{Ka19,S22}).

Notice that none of the above constructions involve the natural $2$-dimensional $\sl_2$-representation $V$.
In \cite{EBCC23,Sta22}  two types of (usual, non super) 
$\sl_2$-Lie algebras are mentioned,  named \textbf{short} and 
\textbf{very short} structures. 
An $\sl_2$-Lie algebra $\g$ is called {\bf short} if $\g$ decomposes, as $\sl_2$-module, into a direct sum of copies of the adjoint, the natural and  the trivial representations. That is, there is a $\sl_2$-module isomorphism:
\begin{align}\label{eq: short sl2-Lie}
    \g \cong  \sl_2\ot J \ \oplus \  V\ot M \ \oplus\ D
\end{align}
where $J$, $M$ and $D$ are vector spaces with trivial structure as $\sl_2$-module and  $D=\g^{\sl_2}$. 
The {\em very short} case coincides with the short case exactly when  with $M=0$.

\begin{ex}
A minimal but interesting example is the 3-dimensional Heisenberg Lie algebra 
$\h_3=\C e_1\oplus\C e_2\oplus \C z$ with bracket $[e_1,e_2]=z$, that  is a  short algebra, since
\[
\h_3\cong V\oplus \C z
\]
but it is not very short.
\end{ex}

In \cite{EBCC23} (see also \cite{EO11}), the authors provide
a Tits construction version for short Lie algebras and, in this case, the conditions on $(J,M)$ 
can be given in terms of $J$-ternary algebras defined by Allison \cite{All76, Hein83}: $J$ is a 
Jordan algebra, $M$ is a Jordan module and there exist a ternary operation on $M$ that satisfies
certain axioms (see Definition \ref{Def: super J-ternary}). This defines a $\Tkk$-construction 
for the non-super case, but they not define a $\Tag$-construction for short algebras,
the notion of $\B^s(J,M)$ 
is not mentioned in those papers.

In this work we extend the definition of short $\sl_2$-algebras to the super case, providing a generalization of the Tits construction and of the adjoint pair of functors $(\Tag,\T)$ to both the short and super cases.
\begin{defi}
    We say that a $\sl_2$-Lie {\bf superalgebra} $\g $ is  \textbf{short} if there exist an $\sl_2$-module
   decomposition:
\begin{align}\label{eq: short sl2-Lie-super}
    \g  = \sl_2\ot \J \ \oplus\  V\ot \M \ \oplus\ \D
\end{align}
where $\J = J_0 \oplus J_1$, $\M = M_0 \oplus M_1$ and $\D = D_0 \oplus D_1$ are  super vector spaces with trivial
structure as $\sl_2$-module; necessarily $\D = \g ^{\sl_2}$.
If $\g $ is short with $\M=0$, then we say $\g $ is {\bf very short}.
%\begin{align}\label{eq: very-short sl2-Lie-super}
 %  \g  = \sl_2\ot \J\oplus \D
%\end{align}
\end{defi}
First, we prove the super version of the algebraic characterization of $(\J,\M)$. That is, the bracket of $\g $ determines a super Jordan structure on $\J$, and $\M$ is a special Jordan supermodule endowed with a (super)ternary operation that satisfying some axioms (see Definition \ref{Def: super J-ternary}).
Secondly, we give a functorial construction replacement of the "inner
derivations generated by $\J$ and $\M$", that we call $\B^s(\J,\M)$, generalizing $\Tag$
for short superalgebras (including short non superalgebras). On the way of looking for a
functorial Tits construction, adapting ideas in \cite{S22} from the very short case
to the short case,
we give
explicitly all Jordan operations in terms of the Lie bracket of $\g$ 
and the $\sl_2$-module structure.

The following are names used in the literature and through this work:
\begin{itemize}
    \item $\ShL$ = the category of Short $\sl_2$-Lie algebras.
    \item $\VShL$ = the category of Very Short $\sl_2$-Lie algebras.
    \item $\ShsL$ = the category of Short Super $\sl_2$-Lie algebras.
    \item $\VShsL$ = the category of Very Short Super $\sl_2$-Lie algebras.
\end{itemize}

This work is organized as follows:

In Section \ref{Sec: Preliminaries} we introduce the basic definitions and notations for 
Lie and Jordan superalgebras that are needed here. 
In Section \ref{Sec: short sl2-super-Lie} we consider  short $\sl_2$-Lie 
{\bf superalgebras} and we give the conditions on the pair $(\J,\M)$ from the Lie superalgebra structure of $\g $,
that are described by the Theorem \ref{Teo: STits-1}, generalizing \cite[Thm 2.4]{EBCC23}.
For $\J = J_0$ and $\M =0$ as vector spaces we obtain the classical Tits construction. For $\J = J_0$, 
$\M = M_0$ we obtain the construction of \cite{EBCC23}. The case of \cite{S22} corresponds to $\M =0$. Hence, 
the decomposition \eqref{eq: short sl2-Lie-super} covers all the cases previously mentioned.

In \ref{Subsec: J-ternary super} we extend the concept of $\J$-ternary superalgebra $(\J,\M)$ to the super case
and, from Theorem \ref{Teor: (sJ,sM) as sJ-tern}, we show that every  short $\sl_2$-super Lie algebra 
defines a $\J$-ternary superalgebra structure on $(\J,\M)$. 
Conversely, from  Theorem \ref{Teor: sJ-tern to til(g)} we show that every $\J$-ternary superalgebra $(\J,\M)$ defines 
a short $\sl_2$-super Lie algebra, given a $\Tkk$ ``version'' for ternary cases. Thus, from both constructions we have the following diagram:
\begin{equation*}
    \xymatrix{
	\g  \ar@{~>}[rr]^{\text{Theorem \ref{Teor: (sJ,sM) as sJ-tern}}} & & (\J,\M) 
 \ar@{~>}[rr]^{\text{Theorem \ref{Teor: sJ-tern to til(g)}}} & &	\g(\J,\M)\\	
    }
\end{equation*}
where $\g(\J,\M) = \Tkk(\J,\M)$.

In Section \ref{Sec: Tag(J,M)-T(g)}, we define the Lie superalgebra $\B ^s(\J,\M)$ that plays the role of a functorial
replacement of $\InnDer(\J,\M)$, and we define
$\Tag(\J,\M)$ as a generalization of 
the $\Tag(\J)$ construction, but for $\J$-ternary superalgebras $(\J,\M)$. First, in \ref{Subsec: Tag(J)} we recall 
the $\Tag$-construction for a Jordan superalgebra $\J$ that defines the functor $\Tag$ from the category of Jordan
superalgebras $\Jsa$ to the category $\VShsL$. 
%In \ref{Subsec: STkk-STag}, it is mentioned how result the $\Tkk$ and $\Tag$ constructions if we consider $\ssl_2$-Lie superalgebras. In \ref{Subsec: Example-Heisenberg} is given a minimal example, where the Heisenberg Lie algebra $\h_3$ can be seen as the zero part of a Lie superalgebra $\g \in\ShsL$.
Following \cite{S22}, we show that $\Tag(\J,\M)$ is functorial and $\Tag:\JterS \to \ShsL$ defines a functor between the category of $\J$-ternary superalgebras $\JterS$ and the category $\ShsL$. 

In \ref{Subsec: funtor hat(T)} we consider the natural extension of the functor $\T(\g)=(J(\g),\cdot)$ 
from $\sl_2$-{\bf very} short algebras into
Jordan algebras
to a functor $\hat{\T}(\g)=(\J(\g),\M(\g),\cdot,\bullet,\star,(-,-,-))$
 from short $\sl_2$-super short algebras into super Jordan ternary algebras and we prove 
in Theorem \ref{Teo: adj T-Tag} that $\Tag$ is left adjoint to
$\hat{\T}$.

\section{Preliminaries}\label{Sec: Preliminaries}

\begin{Def}
A $\sl_2$-Lie algebra is a Lie algebra $\g$ together with an action of $\sl_2$ by derivations. 
That is, $\rho:\sl(2,\C)\to\Der(\g)$ is a morphism of Lie algebras.\end{Def}

We denote by $\killing(x,y) = 4\Traza(xy)$
the Killing form of $\sl_2$ and $\det:V\ot V\to \C$ the determinant form on $V$.
A superalgebra $\A$ is a $\Z_2$-graded space $\A = A_0 \oplus A_1$ with a product $*:\A \ot \A \to \A$ verifying
$A_i* A_j \subset A_{i+j}$ for $i,j\in \Z_2$. The component $A_0$ is called the even part of $\A$ and the 
component $A_1$ is called the odd part of $\A$ respectively.

\begin{Def}\label{Def: Super-Lie}
A {\bf Lie superalgebra} is a super-spaces $\g=\g_0\oplus\g_1$ with a product $[-,-]:\g\ot\g \to \g$ satisfying
\begin{enumerate}[label=\roman*]
    \item $[x,y] =- (-1)^{|x||y|}[y,x]$,

    \item $[[x,y],z] +(-1)^{|x|(|y|+|z|)}[[y,z],x] +(-1)^{|z|(|x|+|y|)}[[z,x],y] = 0$.
\end{enumerate}
An {\bf $\sl_2$-Lie superalgebra} is a Lie superalgebra $\g=\g_0\oplus\g_1$ such that $\sl_2$ acts on $\g$ by even derivations.
\end{Def}

Recall $V=V_1=\C^2$, the representation defining  $\sl_2$, that is the one
with basis
$e_1=(1,0)$, $e_2=(0,1)$, where the action of $\sl_2$ is given by
$e=
\big(\begin{smallmatrix}
0&1\\
0&0
\end{smallmatrix}\big)$, $f=
\big(\begin{smallmatrix}
0&0\\
1&0
\end{smallmatrix}\big)$, $h=
\big(\begin{smallmatrix}
1&0\\
0&-1
\end{smallmatrix}\big)$.

\begin{Def}\label{Def: Symetriz}
We define the symmetrizer map $\odot:V_1\ot V_1 \to \Sl_2$ as the following bilinear map:
\begin{equation}\label{eq: Symmetrizer}
\begin{aligned}
    e_1\ot e_1 \to e, &\qquad e_1\ot e_2 \to -\tfrac{1}{2}h,\\
    e_2\ot e_2 \to -f,&\qquad e_2\ot e_1 \to -\tfrac{1}{2}h,\\
\end{aligned}
\end{equation}
\end{Def}

%\begin{rem}
 %   Since $V\ot V \cong \sl_2 \oplus \C$, the choice of a $\sl_2$-lineal morphism $V\ot V\to \sl_2$ is unique up to a scalar multiple. For convenience, we will consider $\odot = \tfrac{1}{2}\gamma_{u,v}$ where $\gamma_{u,v}$ is the symmetrizer defined in \cite[Lemma 2.2]{EBCC23} by:
 %   \[
   % \gamma_{u,v}(w) = \det(u,w)v + \det(v,w)u
   % \]
%\end{rem}

For the definition of a Jordan superalgebra, we will adopt the one proposed by \cite{PS19}:

\begin{Def}\label{Def: superJordan}
A Jordan superalgebra is a $\Z_2$-graded vector space $\J=J_0\oplus J_1$ with an operation $\cdot: \J \ot \J \to \J$ satisfying the following conditions on homogeneous elements:
\begin{enumerate}
    \item (Super Commutativity) $a\cdot b = (-1)^{|a||b|}b\cdot a$,

    \item (4-Multilinear Jordan Identity) 
    
    $(a\cdot b)\cdot (c\cdot d) + (-1)^{|b||c|}(a\cdot c)\cdot (b\cdot d) + (-1)^{|b||d|+|c||d|}(a\cdot d)\cdot (b\cdot c) =$\hskip 1cm

    \hskip 1cm $=((a\cdot b)\cdot c)\cdot d + (-1)^{|b||c|+|b||d|+|c||d|}((a\cdot d)\cdot c)\cdot b + (-1)^{|a||b|+|a||c|+|a||d|+|c||d|}((b\cdot d)\cdot c)\cdot a$.
\end{enumerate}
for all $a,b,c,d \in \J$ homogeneous.
\end{Def}

About the Jordan multilinear identity, we mention the following
results:
\rem{lem}\label{lema1}
Let $\cdot:\J\ot\J\to\J$ 
be a graded supercommutative product. For $a,b\in \J$
define the linear map
 $\langle-,-\rj :\J \otimes \J \to \End(\J)$ by:
\begin{align}\label{eq: <a,b>}
\langle a,b \rj  (c):=a\cdot(b\cdot c)-(-1)^{|a||b|}b\cdot(a\cdot c)
\end{align}
\begin{enumerate}
\item If $\J$ is a Jordan algebra then
$\langle a,b \rj $ derives the product, that is
\[
\langle a,b\rj (c\cdot d)
=
 (\langle a,b\rj  (c))\cdot d
   + (-1)^{|c|(|a|+|b|)}c\cdot (\langle a,b\rj (d) ),
\]
\item Using supercommutativity, 
the identity 
\begin{align}
\label{apuntobc}
\langle a\cdot b,c\rj  
+ (-1)^{|c|(|b|+|a|)}\langle c\cdot a,b\rj  + (-1)^{|a|(|c|+|b|)}\langle b\cdot c,a\rj  = 0
\end{align}
evaluated at $d$, agree with the Jordan identity
(not for (a,b,c,d) but for (a,b,d,c)). Hence,
$(\J,\cdot)$ is a super Jordan algebra if and only
 if the product
super commutative and  \eqref{apuntobc} holds.

\end{enumerate}

\begin{proof} Assertion 2. is straightforward. Assertion 1. y
probably well-known to specialists, we include a proof for convenience of the reader:

Consider the Jordan identity for $a,b,c,d$:

\[
\tcb{(a\cdot b)\cdot (c\cdot d)} 
+ (-1)^{|b||c|}(a\cdot c)\cdot (b\cdot d) 
+ (-1)^{|b||d|+|c||d|}\tcv{(a\cdot d)\cdot (b\cdot c)} 
=\hskip 1cm
\]
\[
\hskip 1cm
=((a\cdot b)\cdot c)\cdot d + (-1)^{|b||c|+|b||d|+|c||d|}((a\cdot d)\cdot c)\cdot b 
+ (-1)^{|a||b|+|a||c|+|a||d|+|c||d|}((b\cdot d)\cdot c)\cdot a
\]
and the Jordan identity for $d,a,b,c$:
\[
\tcv{(d\cdot a)\cdot (b\cdot c)}
 + (-1)^{|a||b|}(d\cdot b)\cdot (a\cdot c) 
 + (-1)^{|a||c|+|b||c|}\tcb{(d\cdot c)\cdot (a\cdot b)} =
\hskip 1cm
\]
\[
\hskip 1cm 
=((d\cdot a)\cdot b)\cdot c + (-1)^{|a||b|+|a||c|+|b||c|}((d\cdot c)\cdot b)\cdot a 
+ (-1)^{|d||a|+|d||b|+|d||c|+|b||c|}((a\cdot c)\cdot b)\cdot d
\]
Rearranging this second identity using super commutativity
we get
\[
 (-1)^{|a||d|+|b||d| +|c||d|}\tcb{(a\cdot b)\cdot (c\cdot d)}
+ (-1)^{|b||d| +|a||d|+|b||c|+|c||d| }(a\cdot c)\cdot (b\cdot d)
+(-1) ^{|a||d|}  \tcv{(a\cdot d)\cdot (b\cdot c) }
=
\hskip 1cm
\]
\[
\hskip 1cm 
=   (-1)^{|a||d|+|b||a|+|b||d||+|c||b|+|c||a|+|c||d|}
c\cdot (b\cdot (a\cdot d)) 
   + (-1)^{|a||d|+|b||d|+|d||c|}a\cdot (b\cdot (c\cdot d)) 
\]
\[+ (-1)^{|a||d|+|d||b|+|c||d|+|a||b|+|a||c|}(b\cdot (c\cdot a))\cdot d
\]
Multiplying by  $(-1)^{|a||d|+|b||d| +|c||d|}$ we get 
an obvious equality between the respective LHS, and we deduce an equality
 of the corresponding RHS. This new equality, after
  multiplying by 
$(-1)^{|a||b|+|a||c|}$ is
\[
(-1)^{|b||c|}(c\cdot (b\cdot a))\cdot d 
+ b\cdot (c\cdot (a\cdot d)) 
+ (-1)^{|a||b|+|a||c|+|b||c|}a\cdot (c\cdot (b\cdot d))
   =\]
   \[=
   (-1)^{|c||b|}c  \cdot(b\cdot (a\cdot d)) 
   + (-1)^{|a||b|+|a||c|}a\cdot (b\cdot (c\cdot d)) 
+ (b\cdot (c\cdot a))\cdot d\]
which is equivalent to
\[
 b\cdot (c\cdot (a\cdot d)) 
 -   (-1)^{|c||b|}c\cdot (b\cdot (a\cdot d)) 
=\]
\[
=
 (b\cdot (c\cdot a))\cdot d
-(-1)^{|b||c|}(c\cdot (b\cdot a))\cdot d 
   + (-1)^{|a||b|+|a||c|}a\cdot (b\cdot (c\cdot d)) 
- (-1)^{|a||b|+|a||c|+|b||c|}a\cdot (c\cdot (b\cdot d))
\]
hence
\[
\langle b,c\rj (a\cdot d)
=
 (\langle b,c\rj  (a))\cdot d
   + (-1)^{|a|(|b|+|c|)}a\cdot (\langle b,c\rj (d) ).
\]
\end{proof}

\begin{rem}
Let  $(\J,\cdot)$ be a super Jordan algebra, $a,b\in\J$ 
homogeneous and $\langle a,b\rj :\J\to\J $ as before.
If $D:\J\to\J$ is another super derivation,  a straight
computation shows that
\[
[D,\langle a,b\rj ]:=D\circ \langle a,b\rj -
(-1)^{|a||D|}\langle a,b\rj \circ D
\]
\[
=
\langle D(a),b\rj +
(-1) ^{|a||D|}\langle D(a),b\rj 
\]
In particular, 
the subspace of $\End(\J)$ spanned by $\langle a,b \rj $, with $a,b\in\J$ homogeneous
is a Lie (sub)algebra,
 that we call, by definition, the Lie algebra of Inner derivations of $\J$.
We denote it by
 $\Innder(\J)$.
 \end{rem}

\begin{Def}\label{Def: J-super-mod}
    A special $\J$-supermodule is a vector super-space $\M$ with an action $\bullet:\J\ot \M \to \M$ such that $a\in \J \mapsto a\bullet-\in\End(\M)^{+}$ is a morphism of Jordan superalgebras, where $\End(\M)^{+}$ is the Jordan superalgebra with product
    \[
    S\cdot T = \tfrac{1}{2}(S\circ T + (-1)^{|S||T|}T\circ S).
    \]
\end{Def}

\section{Short $\sl_2$-Lie superalgebras}\label{Sec: short sl2-super-Lie}

Suppose that $\g  $ is a short $\sl_2$-Lie superalgebra with decomposition:
\begin{align}\label{eq: sl2-corta tipo 1}
    \g  = \sl_2\ot \J \; \oplus \; V\ot \M \; \oplus \; \D
\end{align}
where here $\D=\g ^{\sl_2}$ and we consider $\sl_2$ 
 and $V$ of even degree. 
 
With the following conventions on letters:
$\d\in D$, $x,y\in \sl_2$, $v,w\in V$, $a,b\in\J$, $m,n\in\M$, where they are 
also assumed to be homogeneous of degree $|\d|$, $|a|$, $|b|$, etc. The $\sl_2$-invariance implies that the bracket in $\g $ is given by the following description,

\begin{enumerate}[label=\roman*]
    \item $[\d,x\ot a] = x\ot\d\act{a}$, \label{eq: Brk-g-1}
    \item $[\d,v\ot m] = v\ot\d\act{m}$, \label{eq: Brk-g-2}
    \item $[x\ot a,y\ot b] = [x,y]\ot a\cdot b + \tfrac{1}{2}\killing(x,y)\langle a,b\rg $, \label{eq: Brk-g-3}
    \item $[x\ot a,v\ot m] =[x,v]\ot a\bullet m$, \label{eq: Brk-g-4}
    \item $[v\ot m,w\ot n] = v\odot w\ot m\star n 
    + \det(v,w)\pg_{m,n}$, \label{eq: Brk-g-5}
\end{enumerate}
for some operations $\cdot:\J\ot\J \to \J$, $\bullet:\J\ot \M \to \M$ and $\star:\M\ot \M \to \J$ 
homogeneous of degree zero and bilinear maps $\langle -,-\rg :\J\ot \J\to \D$, $\pg:\M\ot\M \to \D$ with degrees $|\langle a, b \rg | = |a| + |b|$ and $|\pg_{m, n}| = |m| + |n|$.

\subsection{Main properties of the operations $\cdot,\bullet,\star,
\langle-,-\rg $ and $\pg$.}

The super-anticommutativity of the bracket of $\g $ implies that:
\begin{enumerate}[label=\roman*]
    \item $a \cdot b = (-1)^{|a||b|} b \cdot a$, \label{eq: sconm-cdot JM}
    \item $\langle a, b \rg  = -(-1)^{|a||b|} \langle b, a \rg $, \label{eq: santisim-<,> JM}
    \item $a \bullet m = (-1)^{|a||m|} m \bullet a$, \label{eq: sconm-bull JM}
    \item $m \star n = -(-1)^{|m||n|} n \star m$, \label{eq: sconm-star JM}
    \item $\pg_{m,n} = -(-1)^{|m||n|} \pg_{n, m}$, \label{eq: santisim-partial JM}
\end{enumerate}
Therefore, the maps $\cdot,\bullet,\pg$ are super-commutative and the maps $\star,\langle-,-\rangle$ are super-anticommutative. Let
\begin{align*}
    \sJac(\alpha,\beta,\gamma) = [\alpha,[\beta,\gamma]] - [[\alpha,\beta],\gamma] - (-1)^{|\alpha||\beta|}[\beta,[\alpha,\gamma]]
\end{align*}
be the Jacobi super-identity in $\alpha,\beta,\gamma \in \g $. This identity shows the conditions to be satisfied by these maps. In fact:
\begin{itemize}
    \item From $\sJac(\d_1,\d_2,-)=0$ we obtain that:
    \begin{align*}
        [\d_1,\d_2](\eta) = \d_1(\d_2(\eta)) - (-1)^{|\d_1||\d_2|}\d_2(\d_1(\eta)) 
    \end{align*}
    for all $\eta\in \J\oplus \M$.

    \item From $\sJac(\d,-,-)=0$ we obtain:
    \begin{align}
        \d\act{a\cdot b} &= \d\act{a}\cdot b + (-1)^{|\d||a|}a\cdot \d\act{b}\label{derivation1}\\
        \d\act{a\bullet m} &= \d\act{a}\bullet m + (-1)^{|\d||a|}a\bullet \d\act{m}\label{derivation2}\\
        \d\act{m\star n} &= \d\act{m}\star n + (-1)^{|\d||m|}m\star\d\act{n}\label{derivation3}\\
        [\d,\langle a,b\rg ] &= \langle \d\act a,b\rg  + (-1)^{|\d||a|}\langle a,\d\act b\rg \label{derivation4}\\
        [\d,\pg_{m,n}] &= \pg_{\d\act m,n} + (-1)^{|\d||m|}\pg_{m,\d\act n}\label{derivation5}
    \end{align}
    
    \item From $\sJac(x\ot a,y\ot b,z\ot c) = 0$ we obtain that:
    \begin{align*}
        [x,[y,z]]\ot &a\cdot(b\cdot c) + \tfrac{1}{2}\killing(x,[y,z])\langle a,b\cdot c\rg -[[x,y],z]\ot (a\cdot b)\cdot c-\tfrac{1}{2}\killing([x,y],z)\langle a\cdot b,c\rg \\
        & -(-1)^{|a||b|}[y,[x,z]]\ot b\cdot(a\cdot c)-(-1)^{|a||b|}\tfrac{1}{2}\killing(y,[x,z])\langle b,a\cdot c\rg \\
        & + (-1)^{|a|(|b|+|c|)+1}\tfrac{1}{2}\killing(y,z)x\ot \langle b,c\rg \act{a} -\tfrac{1}{2}\killing(x,y)z\ot \langle a,b\rg \act{c}\\
        & + (-1)^{|a||c|}\tfrac{1}{2}\killing(x,z)y\ot \langle a,c\rg \act{c} = 0
    \end{align*}
    Since the Killing  satisfies
    \[
    [x,[y,z]] = \tfrac{1}{2}\killing(x,y)z - \tfrac{1}{2}\killing(x,z)y,
    \]
    \[
    \killing([x,y],z) = \killing(x,[y,z]),
    \]
    then the equation above gives:
    \begin{align}
        \langle a,b\rg  \act c = a\cdot(b\cdot c) - (-1)^{|a||b|}b\cdot(a\cdot c)
    \end{align}
    \begin{align}
        \langle a\cdot b,c\rg  + (-1)^{|c|(|b|+|a|)}\langle c\cdot a,b\rg  + (-1)^{|a|(|c|+|b|)}\langle b\cdot c,a\rg  = 0
    \end{align}
    
    \item From $\sJac(x\ot a,y\ot b,v\ot m)=0$ we obtain:
    \begin{align*}
        x(y(v))\ot a\bullet(b\bullet m) - x(y(v))\ot (a\cdot b)\bullet m &-\tfrac{1}{2}\killing(x,y)v\ot \langle a,b\rg \act{m}\\
        & - (-1)^{|a||b|}y(x(v))\ot b\bullet(a\bullet m) = 0
    \end{align*}
    On one side, if $x=y=h$ and $v=e_1$ we have
    \begin{align}
        4\langle a,b\rg  \act m = a\bullet(b\bullet m) - (-1)^{|a||b|}b\bullet(a\bullet m)
    \end{align}
    and on other side, if $x=e$, $y=f$ and $v=e_1$ we have
    \begin{align}
        2\langle a,b\rg \act{m} &= a\bullet(b\bullet m)-(a\cdot b)\bullet m
    \end{align}
    that implies:
    \begin{align}
        (a\cdot b)\bullet m &= \tfrac{1}{2}\big(a\bullet(b\bullet m) + (-1)^{|a||b|}b\bullet(a\bullet m)\big) 
    \end{align}

    \item From $\sJac(x\ot a,v\ot m,w\ot n) = 0$ we obtain:
    \begin{align*}
        [x,v\odot w]\ot a\cdot (m\star n) &+ (-1)^{|a|(|m|+|n|)+1}\det(v,w)x\ot \pg_{m,n}\act{a} \\
        & = [x,v]\odot w\ot(a\bullet m)\star n + (-1)^{|a||m|}v\odot [x,w]\ot m\star(a\bullet n)\qquad (*)
    \end{align*}
    \begin{align*}
       \lambda \k(x,v\odot w)\langle a,m\star n\rg  
= \mu \det(x(v),w)\pg_{a\bullet m,n}
+(-1)^{|a||m|}\det(v,x(w))\pg_{m,a\bullet n} \qquad (**)
    \end{align*}
    If we evaluate $(*)$ with $x=h$, $v=e_1$ and $w=e_2$ we obtain that:
    \begin{align}
        2\pg_{m,n}\act a = (-1)^{|a|(|m|+|n|)}(a\bullet m)\star n - (-1)^{|a||n|}m\star(a\bullet n)
    \end{align}
    but if we evaluate $(*)$ with $x=h$, $v=e_1$ and $w=e_1$ we obtain that:
    \begin{align}
        a\cdot(m\star n) = \tfrac{1}{2}\big((a\bullet m)\star n + (-1)^{|a||m|}m\star(a\bullet n)\big)
    \end{align}
    On other hand, $(**)$ with $x=f$, $v=e_1$ and $w=e_1$ implies:
    \begin{align}
        2\langle a,m\star n\rg  = -\pg_{a\bullet m,n}+(-1)^{|a||m|}\pg_{m,a\bullet n}
    \end{align}

    \item Finally, from $\sJac(u\ot m,v\ot n,w\ot r) = 0$ we obtain:
    \begin{align*}
        &(-1)^{|n|+|m|(|n|+|r|)}[v\odot w,u]\ot (n\star r)\bullet m + \det(v,w)(-1)^{|n|+|m|(|n|+|r|)}u\ot \pg_{n,r}(m)\\
        &+(-1)^{|n|}(u\odot v)(w)\ot (m\star n)\bullet r 
        + \det(u,v)(-1)^{|n|}w\ot \pg_{m,n}(r)\\
        &+(-1)^{|n|(|r|+1)}(u\odot w)(v)\ot (m\star r)\bullet n + \det(u,w)(-1)^{|n|(|r|+1)}v\ot \pg_{m,r}(n) = 0
    \end{align*}
    Therefore, if we consider $u= e_1$, $v= e_2$ and $w = e_1$ we have:
    \begin{align*}
        -\pg_{m,n}(r) - &(-1)^{|m|(|n|+|r|)}\pg_{n,r}(m) = \tfrac{1}{2}(m\star n)\bullet r\\
        &\qquad  - (-1)^{|n|(|r|+1)}(m\star r)\bullet n + \tfrac{1}{2}(-1)^{|m|(|n|+|r|)}(n\star r)\bullet m
    \end{align*}
    By the super-anticommutativity of $\pg_{n,r}$, we have the identity
    \begin{align}
        \pg_{m,n}\act r + (-1)^{|m|(|n|+|r|)+|n||r|}&\pg_{r,n}\act m = \\
        \tfrac{1}{2}(m \star n)\bullet r + \tfrac{1}{2}&(-1)^{|m|(|n|+|r|)+|n||r|}(r\star n)\bullet m - (-1)^{|n||r|}(m\star r)\bullet n \nonumber
    \end{align}
\end{itemize}

An immediate consequence is the following:

\begin{Teor}\label{Teo: STits-1}
    A vector superspace $\g $ with isotypic decomposition \eqref{eq: sl2-corta tipo 1} is a short $\sl_2$-Lie superalgebra if and only if there exist super-symmetric maps $\cdot:\J\ot\J \to \J$, $\bullet:\J\ot \M \to \M$, $\pg:\M\ot \M \to \D$ and super-antisymmetric maps $\langle -,-\rg :\J\ot \J \to \D$, $\star:\M\ot \M \to \M$ than describe the brackets of $\g $ by \eqref{eq: Brk-g-1}-\eqref{eq: Brk-g-5} and satisfy the following conditions: (the convention for the  letters is 
    $a,b,c\in\J$, $m,n,r\in\M$, $d\in \D$)
    \begin{enumerate}%[label=\roman*]
        \item $(\J,\cdot)$ is a Jordan superalgebra.\label{itm: super-Jordan JM}

        \item $\M$ is a special Jordan $\J$-supermodule with action $a\otimes m \mapsto a\bullet m$. That is:
        \begin{align}\label{eq: (a.b)bul m}
        (a\cdot b)\bullet m = \frac{1}{2}\big(a\bullet(b\bullet m) + (-1)^{|a||b|}b\bullet(a\bullet m)\big)
        \end{align}

        \item $\D$ super-derives the operations $(\J\oplus \M,\cdot,\bullet,\star)$, that is:
        \begin{align}
            \d\act{a\cdot b} &= \d\act{a}\cdot b + (-1)^{|\d||a|}a\cdot \d\act{b} \label{eq: d(a dot b)}\\
            \d\act{a\bullet m} &= \d\act{a}\bullet m + (-1)^{|\d||a|}a\bullet \d\act{m} \label{eq: d(a bul m)}\\
            \d\act{m\star n} &= \d\act{m}\star n + (-1)^{|\d||m|}m\star\d\act{n} \label{eq: d(m star n)}
        \end{align}
        and for all $\d_1,\d_2\in \D$, $\eta\in \J\oplus \M$ holds:
        \begin{align}
            [\d_1,\d_2](\eta) = \d_1(\d_2(\eta)) - (-1)^{|\d_1||\d_2|}\d_2(\d_1(\eta))\label{eq: [d1,d2]}  
        \end{align}

        \item For all $a,b\in J$, $m,n\in \M$ and $\d\in \D$ the bilinear maps $\langle a,b\rg $ y $\pg_{m,n}$ satisfy:
        \begin{align}
            [\d,\langle a,b\rg ] &= \langle \d\act a,b\rg  + (-1)^{|\d||a|}\langle a,\d\act b\rg  \label{eq: [d,<a,b>]}\\
            [\d,\pg_{m,n}] &= \pg_{\d\act m,n} + (-1)^{|\d||m|}\pg_{m,\d\act n} \label{eq: [d,partial_{m,n}]}
        \end{align}
        \begin{align}
        \langle a,b\rg  \act c &= a\cdot(b\cdot c) - (-1)^{|a||b|}b\cdot(a\cdot c) \label{eq: <a,b>-1 JM}\\
        4\langle a,b\rg  \act m &= a\bullet(b\bullet m) - (-1)^{|a||b|}b\bullet(a\bullet m) \label{eq: <a,b>-2 JM}
        \end{align}
        \begin{align}\label{eq: <ab,c>}
            \langle a\cdot b,c\rg  + (-1)^{|c|(|b|+|a|)}\langle c\cdot a,b\rg  + (-1)^{|a|(|c|+|b|)}\langle b\cdot c,a\rg  = 0
        \end{align}
        \begin{align}
            2\langle a,m\star n\rg  &= -\pg_{a\bullet m,n}+(-1)^{|a||m|}\pg_{m,a\bullet n} \label{eq: angle-partial}\\
            2\pg_{m,n}\act a &= (-1)^{|a|(|m|+|n|)}(a\bullet m)\star n - (-1)^{|a||n|}m\star(a\bullet n) \label{eq: partial_{m,n}-1}
        \end{align}
        \begin{equation}\label{eq: partial_{m,n}-2}
        \begin{aligned}
            \pg_{m,n}\act r + (-1)^{|m|(|n|+|r|)+|n||r|}&\pg_{r,n}\act m = \\
            \tfrac{1}{2}(m \star n)\bullet r + \tfrac{1}{2}&(-1)^{|m|(|n|+|r|)+|n||r|}(r\star n)\bullet m - (-1)^{|n||r|}(m\star r)\bullet n
        \end{aligned}
        \end{equation}
        In particular, $\D$ must act by inner derivations on the Jordan superalgebra $(\J,\cdot)$.

        \item For all $a,b\in \J$ and $m,n\in \M$ hold the following compatibility rule:
        \begin{align}\label{eq: a.(m*n)}
        a\cdot(m\star n) = \frac{1}{2}\big((a\bullet m)\star n + (-1)^{|a||m|}m\star(a\bullet n)\big)
        \end{align}
    \end{enumerate}
\end{Teor}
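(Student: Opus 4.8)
The statement is an equivalence, and the cleanest route is to prove both directions simultaneously by exploiting one structural fact: since $\sl_2$ acts on $\g$ by even derivations, the super-Jacobian $\sJac(\alpha,\beta,\gamma)$ is an $\sl_2$-equivariant trilinear map $\g\ot\g\ot\g\to\g$. Because $\g=\D\oplus\sl_2\ot\J\oplus V\ot\M$, the super-Jacobi identity $\sJac\equiv 0$ is equivalent to its vanishing on homogeneous triples whose factors each lie in one of the three summands. I would first dispose of super-anticommutativity separately: the condition $[\alpha,\beta]=-(-1)^{|\alpha||\beta|}[\beta,\alpha]$ for the bracket \eqref{eq: Brk-g-1}--\eqref{eq: Brk-g-5} is, using that $\odot$ is symmetric, $\det$ antisymmetric, $\killing$ symmetric and $[-,-]_{\sl_2}$ antisymmetric, exactly equivalent to the (anti)symmetry requirements \eqref{eq: sconm-cdot JM}--\eqref{eq: santisim-partial JM} on $\cdot,\bullet,\pg,\langle-,-\rg,\star$.

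For the Jacobi part I would organize the verification into the seven families already displayed before the theorem, indexed by how many factors are taken from $\sl_2\ot\J$ versus $V\ot\M$ (with $\D$-factors absorbed first): $\sJac(\d_1,\d_2,-)$ yields \eqref{eq: [d1,d2]}; $\sJac(\d,-,-)$ on the two module slots yields the derivation laws \eqref{eq: d(a dot b)}--\eqref{eq: d(m star n)} and \eqref{eq: [d,<a,b>]}--\eqref{eq: [d,partial_{m,n}]}; and the four remaining ``pure tensor'' cases on $(\sl_2\ot\J)^{3}$, $(\sl_2\ot\J)^{2}(V\ot\M)$, $(\sl_2\ot\J)(V\ot\M)^{2}$ and $(V\ot\M)^{3}$ yield, respectively, \eqref{eq: <a,b>-1 JM} and \eqref{eq: <ab,c>}; \eqref{eq: (a.b)bul m} together with \eqref{eq: <a,b>-2 JM}; \eqref{eq: partial_{m,n}-1}, \eqref{eq: a.(m*n)} and \eqref{eq: angle-partial}; and finally \eqref{eq: partial_{m,n}-2}. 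I would stress that condition (1) is not an independent hypothesis: by Lemma \ref{lema1}(2), super-commutativity of $\cdot$ together with \eqref{eq: <ab,c>} is equivalent to the full Jordan identity, so ``$(\J,\cdot)$ Jordan'' is recovered for free.

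The crux that fuses the two directions is the equivariance reduction that turns each ``for all $x,y,z\in\sl_2$, $v,w\in V$'' identity into a finite check. Fixing the module elements (say $a,b,m$), the map $(x,y,v)\mapsto\sJac(x\ot a,y\ot b,v\ot m)$ is trilinear and $\sl_2$-equivariant from $\sl_2\ot\sl_2\ot V$ into $\g$, and its image lands inside a single isotypic block of $\g$ (here $V\ot\M$, the $V_{(1)}$-block). By complete reducibility the number of independent scalar identities equals the multiplicity of that block in the source; the coefficient identities $[x,[y,z]]=\tfrac12\killing(x,y)z-\tfrac12\killing(x,z)y$, $\killing([x,y],z)=\killing(x,[y,z])$ and the analogous contractions of $\odot$ and $\det$ make this explicit, and one checks that the specializations used in the excerpt (e.g. $x=y=h,v=e_1$ and $x=e,y=f,v=e_1$ in the $(\sl_2\ot\J)^{2}(V\ot\M)$ case, which is precisely why the two equations \eqref{eq: <a,b>-2 JM} and \eqref{eq: (a.b)bul m} appear) form a basis of the relevant multiplicity space. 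This single mechanism licenses reading the conditions off special weight vectors in the forward direction and, run backwards, reconstituting the full $\sl_2$- and $V$-dependent Jacobi identity from those conditions in the reverse direction.

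With this in hand the reverse implication is formal: assuming conditions $(1)$--$(5)$ one defines the bracket by \eqref{eq: Brk-g-1}--\eqref{eq: Brk-g-5}, obtains super-anticommutativity from the (anti)symmetry of the operations, and verifies super-Jacobi by running the seven case computations in reverse, each reduced to its generating specializations; that $\sl_2$ acts by even derivations is immediate from the form of \eqref{eq: Brk-g-1} and \eqref{eq: Brk-g-2}. The main obstacle I anticipate is twofold: keeping the Koszul signs consistent across the seven cases, and, more substantively, making the multiplicity count airtight in the two mixed cases $\sJac((\sl_2\ot\J)^{2},V\ot\M)$ and $\sJac(\sl_2\ot\J,(V\ot\M)^{2})$, where a single Jacobi case splits into several independent identities landing in different isotypic blocks (e.g. the $(**)$-type identity lands in $\D$ while the $(*)$-type lands in $\sl_2\ot\J$), so that one must confirm that the chosen specializations of $x,y$ or $v,w$ genuinely span each block rather than only detecting a linear combination of the identities.
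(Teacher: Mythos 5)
Your proposal is correct and follows essentially the same route as the paper: the paper's proof of Theorem \ref{Teo: STits-1} is precisely the section's case-by-case evaluation of $\sJac$ on the isotypic summands (super-anticommutativity handled separately, the Killing-form contraction identities, the weight-vector specializations such as $x=y=h$, $v=e_1$ and $x=e$, $y=f$, $v=e_1$) together with Lemma \ref{lema1}, which converts super-commutativity plus \eqref{eq: <ab,c>} into the Jordan identity, exactly as you describe. The only place you go beyond the paper is the explicit $\sl_2$-equivariance and multiplicity-count justification that the chosen specializations genuinely exhaust each isotypic block (needed for the converse direction); the paper takes this for granted, so your version is, if anything, the more complete one.
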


\begin{Proof} From the discussion of this section and
 Lemma \ref{lema1} we see that the list of
 conditions are equivalent to superantisymmetry of the bracket and super Jacobi identity.
\end{Proof}

\begin{Rem} From \eqref{eq: [d,<a,b>]} and \eqref{eq: [d,partial_{m,n}]} we conclude:
    \begin{align}
 [\langle a,b\rg ,\langle c,d\rg ] &= 
 \langle \langle a,b\rg (c),d\rg  
 + (-1)^{(|a|+|b|)|c|}\langle c,\langle a,b\rg (d)\rg 
 \\
 [\langle a,b\rg ,\pg_{m,n}] 
 &   = \pg_{\langle a,b\rg (m),n}
    + (-1)^{(|a|+|b|)|m|}\pg_{m,\langle a,b\rg (n)}
    \\
 [\pg_{m,n},\langle a,b\rg ] 
 &= \langle \pg_{m,n}(a),b\rg  
 + (-1)^{(|m|+|n|)|a|}\langle a,\pg_{m,n}(b)\rg
  \\
&= -(-1)^{(|a|+|b|)(|m|+|n|)}
 [\langle a,b\rg, \pg_{m,n}]
 \\
&= -(-1)^{(|a|+|b|)(|m|+|n|)}
\pg_{\langle a,b\rg(m),n}+  
(-1)^{(|a|+|b|)|n|}
\pg_{m,\langle a,b\rg(n)}
\\
[\pg_{m,n},\pg_{r,s}]
&= \pg_{\pg_{m,n}(r),s} 
+ (-1)^{(|m|+|n|)|r|}\pg_{r,\pg_{m,n}(s)}
\label{ganchogancho}
    \end{align}
   
\end{Rem}

\begin{rem}\label{Prop: deriv-rel}
    Let $\g\in \ShsL$ be a short $\sl_2$-Lie superalgebra with isotypic decomposition \eqref{eq: sl2-corta tipo 1} and $\J$, $\M$ as Theorem \ref{Teo: STits-1}. Then, for all $\d\in\D$, $a,b\in\J$, $m,n\in\M$ and $\eta\in \J\oplus\M$, holds:
    \begin{align}
        \d(\langle a,b\rg(\eta)) &= \langle \d(a),b\rg(\eta) + (-1)^{|\d||a|}\langle a,\d(b)\rg(\eta) + (-1)^{|\d|(|a|+|b|}\langle a,b\rg(\d(\eta))\\
        \d(\partial_{m,n}(\eta)) &= \partial_{\d(m),n}(\eta) + (-1)^{|\d||m|}\partial_{ m,\d(n)}(\eta) + (-1)^{|\d|(|m|+|n|}\partial_{a,b}(\d(\eta)) \label{ganchogancho2}
    \end{align}
\end{rem}
\begin{Proof}
    In fact, from eq. \eqref{eq: [d1,d2]} and eq. \eqref{eq: [d,<a,b>]} we have:
    \begin{align*}
        \d(\langle a,b\rg(\eta)) - (-1)^{|\d|(|a|+|b|)}\langle a,b\rg\act{\d(\eta)} &= \langle \d(a),b\rg(\eta) + (-1)^{|\d||a|}\langle a,\d(b)\rg(\eta)
    \end{align*}
    and similarly, from eq. \eqref{eq: [d1,d2]} and eq. \eqref{eq: [d,partial_{m,n}]} we have:
    \begin{align*}
        \d(\partial_{m,n}(\eta)) - (-1)^{|\d|(|m|+|n|)}\partial_{m,n}(\d(\eta)) &= \partial_{\d(m),n}(\eta) + (-1)^{|\d||m|}\partial_{m,\d(n)}(\eta)
    \end{align*}
    Hence, both equalities are proven.
\end{Proof}

\subsection{$\sl_2$-Lie theoretical formulas for
 the operations $\cdot,\bullet,\star,\langle-,-\rg $ and $\pg$.}

\begin{rem}
Let $\g $ be an $\sl_2$-super Lie algebra with
 decomposition
\[
    \g  = \sl_2\ot \J \ \oplus\  V\ot \M \ \oplus\ \D
\]
Since the eigenvalues of $h$ in $V\ot\M$ are $\pm 1$, and the eigenvalues on $\sl_2\ot \J$ are $\pm2$ and $0$,
and {\em both} representations are generated under $\sl_2$
by vectors anihilated by $e$, one has
\[\M(\g)=
\{\alpha\in \g: h\alpha=\alpha\}= e_1\ot \M
\]
\[
\J(\g)
=\{\alpha\in\g: h\alpha=2\alpha\}=e\ot \J
\]
and 
$\D\cong \g ^{\sl_2}=\{\alpha\in\g: h\alpha=0=e\alpha\}$, 
\end{rem}

\begin{nota} If $a\in \J$ and $m\in\M$ we denote
\[
\wt a:=e\ot a,\hskip 1cm \wt m:=e_1\ot m\]
\end{nota}

%\begin{rem}
%Once the Cartan subalgebra $\C h\subset\sl_2$ is choosen,
%the element $h$ can be choosen with some normalization
% property with respect to the Killing form, but is 
%determined up to a sign. The element $e\in\sl_2$ is
% determined up to a scalar multiple, and
% the element $e_1\in V$, having the property of being
%  anihilated by $e$, is also determined up to scalar multiple.
% However, the subspaces
%\[
%
%\J(\g ):=\{\alpha\in\g : h\alpha=2\alpha\}
%\]
%\[
%\M(\g ):=\{\alpha\in\g : h\alpha=\alpha\}
%\]
% are cannonically defined from the choice of the Cartan
%  subalgebra
% $\C h\subset\sl_2$.
%\end{rem}

From the form of the bracket on $\g$, and evaluating 
$x,y,v,w$ on special cases we get

\begin{enumerate}
    \item 
$
[x\ot a,y\ot b] = [x,y]\ot a\cdot b + \tfrac{1}{2}\killing(x,y)\langle a,b\rg \ \To$
\[
[h\ot a,h\ot b] = [h,h]\ot a\cdot b + \tfrac{1}{2}\killing(h,h)\langle a,b\rg
= \tfrac{1}{2}\killing(h,h)\langle a,b\rg
 \]
If we consider $\J(\g)=\{\alpha\in\g :h\alpha=2\alpha\}=e\ot\J$, and using the notation, for $a\in\J$,
\[
\wt a:=e\ot a\]
then, just using that $h\ot a=-[f,e]\ot a=-f( e\ot a)=-f(\wt a)$ we get
\[
\langle a,b\rg
=\frac{1}{\tfrac{1}{2}\killing(h,h)}[f\wt a,f\wt b]
=\frac{1}{4}[f\wt a,f\wt b]
    \]

Similarly
$[x\ot a,y\ot b] = [x,y]\ot a\cdot b + \tfrac{1}{2}\killing(x,y)\langle a,b\rg \ \To$
\[
[h\ot a,e\ot b] 
= [h,e]\ot a\cdot b + \tfrac{1}{2}\killing(h,e)\langle a,b\rg
= 2e\ot a\cdot b 
\]
\[
\To \wt {a\cdot b}=\frac12[f\wt a,\wt b] \]

    \item $[x\ot a,v\ot m] = x(v)\ot a\bullet m$,\; $[v\ot m,x\ot a] = -x(v)\ot m\bullet a$, 
implies

$[h\ot a,e_1\ot m] = h(e_1)\ot a\bullet m=e_1\ot a\bullet m$.
Now we use the characterization
\[
\M(\g)=\{\alpha\in\g: h\alpha=\alpha\}=e_1\ot\M\]
and the notation, for $m\in \M$,
\[
\wt m:=e_1 \ot m
\]
From the previous formula, together with $h=[e,f]=-[f,e]$,
we get
\[
\wt {a\bullet m}=-[f(e\ot a),(e_1\ot m)]=-[f\wt a,\wt m].
\]
Notice $h\wt a=2\wt a$ and $h\wt m=\wt m$ implies $h[\wt a,\wt m]=3[\wt a,\wt m]=0$,
because there is no egenvalue 3 in $\g $, so
\[
0=f0=f[\wt a,\wt m]=[f\wt a,\wt m]+[\wt a,f\wt m]
\ \To \ 
a\bullet m=-[f\wt a,\wt m]=[\wt a,f\wt m]
\]

    \item From $[v\ot m,w\ot n] =v\odot w\ot m\star n + \det(v,w)\pg_{m,n}$
    we get in particular
        
    $[e_1\ot m,e_1\ot n] =(e_1\odot e_1)\ot m\star n + \det(e_1,e_1)\pg_{m,n}=e\ot m\star n 
    \ \To \ 
   \wt{m\star n}=[\wt m,\wt n]$.
   
And finally, from
   
    $[e_1\ot m,e_2\ot n] =e_1\odot e_2\ot m\star n + \det(e_1,e_2)\pg_{m,n}$

    $[e_2\ot m,e_1\ot n] =e_2\odot e_1\ot m\star n + \det(e_2,e_1)\pg_{m,n}$

we get
$[e_1\ot m,e_2\ot n] 
    -[e_2\ot m,e_1\ot n]
    =$
    \[
    =\Big(e_1\odot e_2\ot m\star n + \det(e_1,e_2)\pg_{m,n}\Big)
-\Big(
   e_2\odot e_1\ot m\star n + \det(e_2,e_1)\pg_{m,n}\Big)
    \]

Using symmetry of the symmetric product and anti-symmetry of 
the determinant we conclude
\[
[e_1\ot m,e_2\ot n] 
    -[e_2\ot m,e_1\ot n]
        =2 \pg_{m,n}\]
    so
\[
\pg_{m,n}=\frac1{2}\Big([\wt m,f\wt n]-[f\wt m,\wt n]    \Big)
\]
    \end{enumerate}

We sumarize the following ``dictionary'':
\begin{prop}
\label{propdic1}
Let $\g=\sl_2\ot\J\oplus V\ot\M\oplus\D$ be an $\sl_2$-short super Lie algebra and identify
\[
\J(\g)=\{\alpha\in\g:h\alpha=2\alpha\}=e\ot\J\cong \J,
\hskip 1cm
\M(\g)=\{\alpha\in\g:h\alpha=\alpha\}=e_1\ot\M\cong \M
\]
\[
\hskip 2cm 
\wt a=e\ot a \leftrightarrow a
\hskip 5cm
\wt m=e_1\ot m \leftrightarrow m
\]
%Abusing notation, writing $a,b, m,n$ instead of $\wt a,
%\wt b,\wt m,\wt n$, 
The operations $\cdot,\bullet,\star,
\pg,\langle -,-\rg$, in terms of the Lie bracket and $\sl_2$ action are given by:

\begin{align}
\wt a\cdot \wt b&=\frac12[f\wt a,\wt b] \label{cdotlie}\\
\wt {a\bullet m}&=-[f\wt a,\wt m]=[\wt a,f\wt m]\label{bulletlie}\\
   \wt { m\star n}&=[\wt m,\wt n]\label{starlie}\\
\wt {\langle a,b\rg}&=\frac{1}{4}[f\wt a,f\wt b]\label{langlelie}\\
\pg_{m,n}
=&\frac1{2}\Big([\wt m,f\wt n]-[f\wt m,\wt n] \Big)\label{partiallie}
    \end{align}

\end{prop}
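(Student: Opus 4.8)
The plan is to \emph{invert} the five $\sl_2$-equivariant bracket formulas \eqref{eq: Brk-g-3}--\eqref{eq: Brk-g-5}. Because the bracket of $\g$ is a morphism of $\sl_2$-modules, each operation $\cdot,\bullet,\star,\pg,\langle-,-\rg$ is recovered by evaluating the relevant bracket at a \emph{single} well-chosen pair of weight vectors, selected so that the unwanted summand in the formula drops out. Concretely, I would work from the weight-space description $\J(\g)=\{h\alpha=2\alpha\}=e\ot\J$ and $\M(\g)=\{h\alpha=\alpha\}=e_1\ot\M$ and use only the highest-weight representatives $\wt a=e\ot a$ and $\wt m=e_1\ot m$.

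The key device is to rewrite the ``lowered'' $\sl_2$-components appearing inside the bracket formulas via the action of $f$ on these representatives. Since $\sl_2$ acts on the first tensor factor, $f\cdot(e\ot a)=[f,e]\ot a=-h\ot a$ and $f\cdot(e_1\ot m)=(f e_1)\ot m=e_2\ot m$, so that $h\ot a=-f\wt a$ and $e_2\ot m=f\wt m$. These two substitutions convert every non-highest-weight vector occurring in \eqref{eq: Brk-g-3}--\eqref{eq: Brk-g-5} into a bracket or an $\sl_2$-action applied to $\wt a$ or $\wt m$, which is exactly the form demanded by the statement.

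With this in hand each identity becomes a one-line specialization. For $\langle a,b\rg$ I evaluate \eqref{eq: Brk-g-3} at $x=y=h$: the $[h,h]=0$ term vanishes and $\tfrac12\killing(h,h)=4$, so substituting $h\ot a=-f\wt a$ gives $\wt{\langle a,b\rg}=\tfrac14[f\wt a,f\wt b]$. For $\cdot$ I take $x=h$, $y=e$, where $\killing(h,e)=0$ and $[h,e]=2e$ isolate $\wt{a\cdot b}$. For $\bullet$ I evaluate \eqref{eq: Brk-g-4} at $x=h$, $v=e_1$ using $h\cdot e_1=e_1$; the second expression $\wt{a\bullet m}=[\wt a,f\wt m]$ then follows from the vanishing of the weight-$3$ space of $\g$, which forces $0=f[\wt a,\wt m]=[f\wt a,\wt m]+[\wt a,f\wt m]$. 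For $\star$ and $\pg$ I use \eqref{eq: Brk-g-5}: at $v=w=e_1$ the determinant term dies and $e_1\odot e_1=e$ yields $\wt{m\star n}=[\wt m,\wt n]$, while subtracting the $(e_1,e_2)$- and $(e_2,e_1)$-evaluations annihilates the symmetric $\odot$-term (by symmetry of $\odot$ and antisymmetry of $\det$) and leaves $2\pg_{m,n}$.

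Since the bracket is $\sl_2$-linear and both $\sl_2\ot\J$ and $V\ot\M$ are generated under $\sl_2$ by their $e$-annihilated highest-weight vectors, these extremal evaluations determine the operations everywhere, so no further equivariance verification is needed. I expect the only real difficulty to be purely administrative: carrying the exact constants $\killing(h,h)=8$, $\killing(h,e)=0$, $e_1\odot e_1=e$, $\det(e_1,e_2)=1$, and the signs introduced by $h\ot a=-f\wt a$ and $e_2\ot m=f\wt m$. These are precisely the computations already carried out in the paragraphs preceding the statement, so the proposition follows simply by collecting them into the stated dictionary.
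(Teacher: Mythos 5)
Your proposal is correct and follows essentially the same route as the paper: the paper's own derivation also identifies $\J(\g)$ and $\M(\g)$ as the $h$-eigenspaces with highest-weight representatives $\wt a=e\ot a$, $\wt m=e_1\ot m$, uses the substitutions $h\ot a=-f\wt a$ and $e_2\ot m=f\wt m$, and extracts each operation by the very same specializations (namely $(h,h)$ for $\langle-,-\rg$, $(h,e)$ for $\cdot$, $(h,e_1)$ for $\bullet$ together with the weight-$3$ vanishing argument, $(e_1,e_1)$ for $\star$, and the antisymmetrized $(e_1,e_2)$ minus $(e_2,e_1)$ evaluation for $\pg$). Your closing remark that $\sl_2$-equivariance makes these extremal evaluations sufficient matches the paper's implicit use of the same fact, so nothing is missing.
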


\begin{rem}\label{rem5}
If $\g$ is a short $\sl_2$ algebra, $a,b\in\J(\g)$, then 
$\langle a,b\rg\in\D=\g^{\sl_2}\subset\sl_2$ is an element of $\g$,
while
$\langle a,b\rj\in\InnDer(\J)$ is a derivation of $\J$.  The relation between them 
is that $\D\subset\g$ acts on $\g$ by Lie
derivations that are $\sl_2$-linear. So, their elements
induce derivations on $\J$.
The relation between $\langle-,-\rg$ and
$\langle-,-\rj$ is simply
\[
[\langle a,b \rg,\wt  c]
=[\langle a,b \rg,e\ot c]
=e\ot\langle a,b \rj(c)
=\wt{\langle a,b \rj(c)}
\]
By abuse of notation we will sometimes write
$
[\langle a,b \rg,c]=\langle a,b \rj(c)$.
\end{rem}

\section{$\J$-ternary Superstructures}\label{Subsec: J-ternary super}

The following definition generalizes the concept of $J$-ternary algebra 
(see \cite{EO11,All76,Hein83}) for the super case:

\begin{Def}\label{Def: super J-ternary}
    Let $\J$ be a Jordan superalgebra with product $\cdot:\J\ot \J\to \J$, $\M$ a special $\J$-supermodule 
    with Jordan action $\bullet:\J\ot \M\to \M$ and super-anticommutative operation $\star: \M\ot \M \to \J$. Suppose that $(-,-,-):\M^{\ot 3} \to \M$ is a trilinear product in $\M$, homogeneous with even degree. 
    The pair $(\J,\M)$ is called a $\J$-ternary superalgebra if, for all $a\in \J$ and $m,n,r,s,t\in \M$ (homogeneous), hold the following axioms:
    \begin{enumerate}[label=\textbf{{SJT\arabic*}}]
        \item $a\cdot(m\star n) = \frac{1}{2}\big((a\bullet m)\star n + (-1)^{|a||m|}m\star(a\bullet n)\big)$,\label{SJT1}

        \item $a\bullet(m,n,s) = (a\bullet m,n,s) - (-1)^{|a||m|}(m,a\bullet n,s) + (-1)^{|a|(|m|+|n|)}(m,n,a\bullet s)$,\label{SJT2}
        
        \item $(m,n,s) = (-1)^{|m|(|n|+|s|)+|n||s|}(s,n,m) - (-1)^{|n||s|}(m\star s)\bullet n$,\label{SJT3}

        \item $(m,n,s) = (-1)^{|m||n|}(n,m,s) + (m\star n)\bullet s$,\label{SJT4}

        \item $(m,n,r)\star s + (-1)^{|r|(|m|+|n|)}r\star(m,n,s) = (-1)^{|n|(|r|+|s|)}m\star((r\star s)\bullet n)$,\label{SJT5}

 \item $(m,n,(r,s,t)) = 
 ((m,n,r),s,t) +(-1)^{|r|(|m|+|n|)+|m||n|}(r,(n,m,s),t) + 
                  (-1)^{(|m|+|n|)(|r|+|s|)}(r,s,(m,n,t))$.\label{SJT6}
    \end{enumerate}
\end{Def}

\begin{rem}
Suppose that all operations $\cdot=\bullet=\star$ are identically zero, then a minimal
algebraic manipulation shows that  axioms \eqref{SJT1}-\eqref{SJT6}, in that case, are equivalent to:
\begin{enumerate}\label{c3 triv-op}
    \item $(m,n,s) =(-1)^{|m||n|}(n,m,s)
=    (-1)^{|n||s|}(m,s,n)$,
     
    \item $(m,n,(r,s,t)) = ((m,n,r),s,t) + (-1)^{|r|(|m|+|n|)+|m||n|}(r,(n,m,s),t)+(-1)^{(|m|+|n|)(|r|+|s|)}(r,s,(m,n,t))$.
\end{enumerate}
Therefore, the ternary product is completely (super) symmetric and one can see it as a map
$(-,-,-):S^3\M\to \M$. If $\M=M_0\oplus M_1$ then:
\[
(-,-,-):S^3\M\cong
S^3M_0\ \oplus \ S^2M_0\ot M_1 \ \oplus\ M_0\ot \Lambda^{2}(M_1)\ \oplus\  \Lambda^3M_1
\ \to \M
\]
\end{rem}

\begin{teo}\label{Teor: (sJ,sM) as sJ-tern}
    If $\g  $ is a super short $\sl_2$ Lie algebra
     with isotypic decomposition \eqref{eq: sl2-corta tipo 1} then $(\J,\M)$ is a $\J$-ternary superalgebra with the triple product $(-,-,-):\M \ot \M \ot \M \to \M$ given by:
    \begin{align}\label{eq: STrProd-Jordan}
        (m,n,s):= -\pg_{m,n}(s) + \tfrac{1}{2}(m\star n)\bullet s
    \end{align}

\end{teo}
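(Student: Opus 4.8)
The plan is to verify the six axioms \ref{SJT1}--\ref{SJT6} one at a time, in each case substituting the definition \eqref{eq: STrProd-Jordan} --- i.e. using that the operator $(m,n,-)$ equals $-\pg_{m,n}+\tfrac12(m\star n)\bullet(-)$ on $\M$ --- and then reducing the resulting identity to relations already established in Theorem \ref{Teo: STits-1} and its Remarks. Since those relations are precisely the super-antisymmetry and super-Jacobi conditions forced by $\g$ being a Lie superalgebra, no new information about $\g$ is needed; the whole proof is sign bookkeeping. The base case is free: \ref{SJT1} is literally the compatibility rule \eqref{eq: a.(m*n)}.

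For the two symmetry axioms I would argue as follows. Axiom \ref{SJT4}, the swap of the first two slots, depends only on the symmetry \emph{types} of the two pieces of $(m,n,s)$: since $\pg$ is super-symmetric ($\pg_{m,n}=(-1)^{|m||n|}\pg_{n,m}$) and $\star$ is super-antisymmetric, forming $(m,n,s)-(-1)^{|m||n|}(n,m,s)$ cancels the $\pg$-terms and doubles the $\star$-term, leaving exactly $(m\star n)\bullet s$. Axiom \ref{SJT3}, the swap of the outer slots, is obtained by substituting $\pg_{m,n}(s)=\tfrac12(m\star n)\bullet s-(m,n,s)$ into \eqref{eq: partial_{m,n}-2}; the $(m\star n)\bullet r$ and $(r\star n)\bullet m$ contributions cancel against the corresponding halves, and one is left with precisely the relation between $(m,n,s)$, $(s,n,m)$ and $(m\star s)\bullet n$ demanded by \ref{SJT3}.

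The two Leibniz-type axioms are verified by moving a derivation across the bilinear operations. For \ref{SJT5} I would apply the derivation property \eqref{eq: d(m star n)} with $\d=\pg_{m,n}$ to rewrite $\pg_{m,n}(r)\star s+(-1)^{|r|(|m|+|n|)}r\star\pg_{m,n}(s)$ as $\pg_{m,n}(r\star s)$; since $r\star s\in\J$ this last term is computed by \eqref{eq: partial_{m,n}-1}, while the remaining $\star$-of-$\bullet$ terms collapse via \eqref{eq: a.(m*n)} applied with $a=m\star n$, and a short cancellation yields the right-hand side $(-1)^{|n|(|r|+|s|)}m\star((r\star s)\bullet n)$. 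For \ref{SJT2}, the $\tfrac12(m\star n)\bullet(-)$ part of $a\bullet(m,n,s)$ is handled by the special-module identity \eqref{eq: (a.b)bul m} together with \eqref{eq: a.(m*n)}, and the $\pg$-part $a\bullet\pg_{m,n}(s)$ is rewritten using \eqref{eq: d(a bul m)} with $\d=\pg_{m,n}$, so that it is expressed through $\pg_{m,n}(a\bullet s)$ and $\pg_{m,n}(a)$, the latter given by \eqref{eq: partial_{m,n}-1}; reassembling produces the three-term derivation rule, with the middle slot correctly carrying the sign $-(-1)^{|a||m|}$.

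The main obstacle is the ternary ``associativity'' axiom \ref{SJT6}, which says that $(m,n,-)$ acts as a signed derivation of the triple product. The crucial input is the bracket relation \eqref{ganchogancho} for $[\pg_{m,n},\pg_{r,s}]$ together with \eqref{ganchogancho2}: applying $\pg_{m,n}$ to $(r,s,t)=-\pg_{r,s}(t)+\tfrac12(r\star s)\bullet t$ and invoking \eqref{ganchogancho} turns $\pg_{m,n}\pg_{r,s}$ into $\pg_{\pg_{m,n}(r),s}+(-1)^{(|m|+|n|)|r|}\pg_{r,\pg_{m,n}(s)}$ plus the reversed composite, which is what generates the first and third terms on the right of \ref{SJT6}; the appearance of $(n,m,s)$ rather than $(m,n,s)$ in the middle slot is forced by this reversal together with \ref{SJT4}. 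The $\tfrac12(m\star n)\bullet(-)$ corrections then have to be matched using \eqref{eq: d(a bul m)}, \eqref{eq: (a.b)bul m} and the already-proved \ref{SJT2}--\ref{SJT5}, and this is where the sign tracking is heaviest. To avoid it I would prefer the Lie-theoretic shortcut: by Proposition \ref{propdic1} the triple product is the double bracket $\wt{(m,n,s)}=-\tfrac12[[\wt m,f\wt n],\wt s]+\tfrac12[[f\wt m,\wt n],\wt s]+\tfrac12[[\wt m,\wt n],f\wt s]$ in $\g$, so that \ref{SJT6} becomes a direct consequence of the super-Jacobi identity of $\g$ applied to iterated brackets of the $\wt m,\wt n,\dots$ and their $f$-translates, bypassing the separate coefficient computations.
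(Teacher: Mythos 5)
Your strategy is correct, but it inverts the paper's division of labor, so the two proofs are genuinely different. The paper first collapses the definition to the single double bracket $(m,n,s)=-[[\wt m,f\wt n],\wt s]$ (your three-term expression simplifies to this, since $f[\wt m,\wt n]=[f\wt m,\wt n]+[\wt m,f\wt n]$), then verifies \ref{SJT2}--\ref{SJT5} by super-Jacobi manipulations plus weight arguments (e.g.\ $[\wt m,[\wt r,\wt s]]=0$ because $\g$ has no $h$-eigenvalue $3$), and reserves the purely algebraic reduction---via \ref{SJT2} and the derivation identity \eqref{ganchogancho2}---for \ref{SJT6} alone. You do the mirror image: \ref{SJT2}--\ref{SJT5} algebraically from the relations of Theorem \ref{Teo: STits-1}, and \ref{SJT6} by the Lie-theoretic shortcut. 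Both routes work. Your symmetry-type argument for \ref{SJT4} (super-symmetry of $\pg$ kills the $\pg$-terms, super-antisymmetry of $\star$ doubles the $\star$-term) is cleaner than the paper's bracket computation, and your shortcut for \ref{SJT6} is viable and arguably lighter than the paper's sign-heavy reduction: writing everything in terms of $T_{m,n}=[\wt m,f\wt n]$, super-Jacobi reduces \ref{SJT6} to the identity $[T_{m,n},f\wt s]=(-1)^{|m||n|}f[T_{n,m},\wt s]$, which follows from $f^2\wt s=0$ together with the vanishing of the weight-$3$ element $[[\wt m,\wt n],\wt s]$---exactly the kind of argument the paper itself uses for \ref{SJT5}.

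Two details need repair before your sketch is a complete proof. For \ref{SJT2}, after substituting the definition, the right-hand side contains $\pg_{a\bullet m,n}(s)$ and $\pg_{m,a\bullet n}(s)$; the only relation in Theorem \ref{Teo: STits-1} that controls these is \eqref{eq: angle-partial}, used together with \eqref{eq: <a,b>-2 JM}, and neither appears in your list of inputs---without them those terms cannot be reassembled into the three-slot derivation rule. For \ref{SJT3}, your substitution argument is precisely the equivalence the paper itself invokes in the converse Theorem \ref{Teor: sJ-tern to til(g)}, so the route is right in spirit; however, equation \eqref{eq: partial_{m,n}-2} as printed carries sign typos: substituting $\pg_{m,n}(s)=\tfrac12(m\star n)\bullet s-(m,n,s)$ into it literally yields $(m,n,r)+(-1)^{|m|(|n|+|r|)+|n||r|}(r,n,m)=(-1)^{|n||r|}(m\star r)\bullet n$, which has the opposite relative signs to \ref{SJT3} (the Lie-theoretic computation confirms \ref{SJT3} as stated is the correct identity). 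So the cancellation you describe goes through only after correcting those signs; taken at face value it would ``prove'' a statement inconsistent with the theorem.
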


\begin{rem}In  Lie theoretical terms, we have
$(m,n,s)=-[[m,fn],s]$
\end{rem}

\begin{proof}
\[
(m,n,s)
= -\pg_{m,n}(s) + \tfrac{1}{2}(m\star n)\bullet s
\]
\[
= -\frac12[[m,fn]-[fm,n],s] + 
\tfrac{1}{2}[m, n]\bullet s
= -\frac12[[m,fn]-[fm,n],s]  
-\tfrac{1}{2}[(f[m, n]), s]
\]
\[
= -\frac12
\Big(
[[m,fn],s]+[[fm,n],s]  
-[[fm, n], s]
-[[m, fn], s]\Big)
= -
[[m,fn],s]
\]
\end{proof}

\begin{proof}[Proof of Theorem \ref{Teor: (sJ,sM) as sJ-tern}]

Property \eqref{SJT1} is the same as equation 
\eqref{eq: a.(m*n)}.
For  \eqref{SJT2}:
 \[
 a\bullet(m,n,s)
  - (-1)^{|a|(|m|+|n|)}(m,n,a\bullet s)
  =\]
  \[
=  -  a\bullet [[m,fn],s]
   +(-1)^{|a|(|m|+|n|)}[[m,fn],a\bullet s]
  \]
\[
  =
  [fa,[[m,fn],s]]
  -(-1)^{|a|(|m|+|n|)}[[m,fn],[fa,s]]
  \]
 Using Jacobi in the first summand
\[
  =
  [  [fa,[m,fn]] ,  s]
+(-1)^{|a|(|m|+|n|)}  [[m,fn],  [fa, s]]
  -(-1)^{|a|(|m|+|n|)}[[m,fn],[fa,s]]
  \]
  \[
  =
  [  [fa,[m,fn]] ,  s]
  \]
  and now using Jacobi for $[fa,[m,fn]]$
  \[
  =
  [  [[fa,m],fn] ,  s]
+(-1)^{|a||m|}  [  [m,[fa,fn]] ,  s]
  \]
  
\noindent On the other side,
\[
  (a\bullet m,n,s) - (-1)^{|a||m|}(m,a\bullet n,s)
= \]
\[
=- [[a\bullet m,fn],s] +(-1)^{|a||m|}[[m,f(a\bullet n)],s]
\]
\[
=  [[[fa,m],fn],s] + (-1)^{|a||m|}[[m,f[a,fn]],s]
\]  
as desired.    
    For \eqref{SJT3}
      \[
  (-1)^{|m|(|n|+|s|)+|n||s|}(s,n,m) -(m,n,s)
 =\]
 \[
  =-(-1)^{|m|(|n|+|s|)+|n||s|}[[s,fn],m]
+[[m,fn],s]
  \]
(using Jacobi identity)
  \[
=(-1)^{|n||s|}      [[m,s],fn]
\]
\[
=  (-1)^{|n||s|}[m\star s, fn]
=   (-1)^{|n||s|}(m\star s)\bullet n
\]

\noindent For \eqref{SJT4}:
    
   \[
       (-1)^{|m||n|}(n,m,s) + (m\star n)\bullet s 
   =\]
   \[
   =-(-1)^{ |m||n|}[[n,fm],s]+
   [m,n]\bullet s
  \]
  \[
   =-(-1)^{ |m||n|}[[n,fm],s]-
   [f[m,n], s]
   \]
  \[
   =-(-1)^{ |m||n|}[[n,fm],s]
   -
   [[fm,n], s]
  -[[m,fn], s]
   \]
  \[
  =
  -[[m,fn], s]
   \]
  \[
  = (m,n,s)
   \]
   
\noindent
    For \eqref{SJT5}, $(m,n,r)\star s 
+ (-1)^{|r|(|m|+|n|)}r\star(m,n,s) \overset{?}{=}
(-1)^{|n|(|r|+|s|)}m\star((r\star s)\bullet n)$,
looking at the LHS:
\[
   (m,n,r)\star s + (-1)^{|r|(|m|+|n|)}r\star(m,n,s) =
\]
\[
=
-[[[m,fn],r],s] - (-1)^{|r|(|m|+|n|)}[r,[[m,fn],s] ] 
 \]
On the other side,
\[
(-1)^{|n|(|r|+|s|)}m\star((r\star s)\bullet n)
=(-1)^{|n|(|r|+|s|)}[m, ([r,s]\bullet  n)]
\]
\[
=(-1)^{|n|(|r|+|s|)}[m, [[r,s],fn]]
\]
  and using Jacobi identity, this is
\[
=
(-1)^{|n|(|r|+|s|)}[[m, [r,s]] , fn]
+
[[m,fn],[r,s]]
\]
But notice that $m,r,s$ have eigenalue 1 with respect to $h$,
so $[m,[r,s]]$ has eigenvalue 3 for $h$, hence
$[m,[r,s]]=0$ and we get

\[
(-1)^{|n|(|r|+|s|)}m\star((r\star s)\bullet n)=
[[m,fn],[r,s]]
\]
and Jacobi again
\[
= [ [ [m,fn],r] ,s]
+(-1)^{mr+nr}[r, [[m,fn],s]]
\]

\noindent For \eqref{SJT6}, we need to see
\[(m,n,(r,s,t)) \overset{?}{=} 
 ((m,n,r),s,t) +(-1)^{|r|(|m|+|n|)+|m||n|}(r,(n,m,s),t) + 
                  (-1)^{(|m|+|n|)(|r|+|s|)}(r,s,(m,n,t))
                  \]
We follow the argument in \cite{EBCC23}.
Using the definition
\[
(m,n,s)= -\pg_{m,n}(s) + \tfrac{1}{2}(m\star n)\bullet s\]
 we have
 \[
 (m,n,(r,s,t))
 =
 -\pg_{m,n}(r,s,t) + \tfrac{1}{2}(m\star n)\bullet (r,s,t)
 \]
and for the RHS

\[
 ((m,n,r),s,t) +(-1)^{|r|(|m|+|n|)+|m||n|}(r,(n,m,s),t) + 
                  (-1)^{(|m|+|n|)(|r|+|s|)}(r,s,(m,n,t))
 \]
\[
= \big(( -\pg_{m,n}(r) + \tfrac{1}{2}(m\star n)\bullet r),s,t\big) 
+(-1)^{|r|(|m|+|n|)+|m||n|}
\big(r,( -\pg_{n,m}(s) + \tfrac{1}{2}(n\star m)\bullet s),t\big) 
\]
\[+ 
(-1)^{(|m|+|n|)(|r|+|s|)}
\big(r,s,( -\pg_{m,n}(t) + \tfrac{1}{2}(m\star n)\bullet t)\big)
\]
                  
\[
=
 -\big(\pg_{m,n}(r),s,t\big) 
+ \tfrac{1}{2}\big( (m\star n)\bullet r,s,t\big) 
\]
\[
-(-1)^{|r|(|m|+|n|)+|m||n|}
\big(r,\pg_{n,m}(s) ,t\big) 
+(-1)^{|r|(|m|+|n|)+|m||n|}
+\tfrac12\big(r,(n\star m)\bullet s,t\big) 
\]
\[
- 
(-1)^{(|m|+|n|)(|r|+|s|)}
\big(r,s,\pg_{m,n}(t) \big)
+ 
(-1)^{(|m|+|n|)(|r|+|s|)}
\tfrac12\big(r,s,(m\star n)\bullet t\big)
\]
Using \eqref{SJT2} for $a=\frac12m\star n=(-1)^{|m||n|}\frac12 n\star m$ and the fact that  $\pg_{m,n}=(-1)^{|m||n|}\pg_{n,m}$,
we need  to prove

\[
\pg_{m,n}(r,s,t)\overset{?}{=}
\big(\pg_{m,n}(r),s,t\big) 
+(-1)^{|r|(|m|+|n|)}
\big(r,\pg_{m,n}(s) ,t\big) 
+(-1)^{(|m|+|n|)(|r|+|s|)}
\big(r,s,\pg_{m,n}(t) \big)
\]
For notational convenience, let us denote $d:=\pg_{m,n}$, that is, we want to prove
\[
d(r,s,t)\overset{?}{=}
\big(d(r),s,t\big) 
+(-1)^{|r|(|m|+|n|)}
\big(r,d(s) ,t\big) 
+(-1)^{(|m|+|n|)(|r|+|s|)}
\big(r,s,d(t) \big)
\]
Using again the definition
\[
(r,s,t)=-\pg_{r,s}(t)+\frac12(r\star s)\bullet t
\]
the above equation translates into
\[
-d\big(\pg_{r,s}(t)\big)+
\frac12d\big((r\star s)\bullet t\big)\overset{?}{=}
-\pg_{d(r),s}(t)+\frac12(d(r)\star s)\bullet t
\]
\[
-(-1)^{|r|(|m|+|n|)}
\pg_{r,d(s)}(t)
+(-1)^{|r|(|m|+|n|)}
\frac12(r\star d(s))\bullet t
\]
\[
-(-1)^{(|m|+|n|)(|r|+|s|)}
\pg_{r,s}(d(t))+(-1)^{(|m|+|n|)(|r|+|s|)}\frac12(r\star s)\bullet d(t)
\]
But we know 
(see \eqref{eq: d(a dot b)},\eqref{eq: d(a bul m)}, \eqref{eq: d(m star n)})
that $d$ derives $\star$ and $\bullet$, that is
\[
d\big((r\star s)\bullet t\big)=
(d(r)\star s)\bullet t
+(-1)^{|r|(|m|+|n|)}
(r\star d(s))\bullet t
+(-1)^{(|m|+|n|)(|r|+|s|)}
(r\star s)\bullet d(t)
\]
So, we need to prove

\[
d\big(\pg_{r,s}(t)\big)\overset{?}{=}
\pg_{d(r),s}(t)
+(-1)^{|r|(|m|+|n|)}
\pg_{r,d(s)}(t)
+(-1)^{(|m|+|n|)(|r|+|s|)}
\pg_{r,s}(d(t))
\]
and this is precisely \ref{ganchogancho2}.
\end{proof}

\subsection{The Lie algebra $\Innder(\J,\M)$
and TKK construction for $\J$-ternary algebras}

In this section, $\big(\J,\M,\cdot,\bullet,\star,(-,-,-)\big)$ will be an arbitrary 
$\J$-ternary superalgebra, not necessarily a one coming 
from a super $\sl_2$-Lie algebra.
We begin by extending $\langle a,b\rj :\J\to\J$
to a map $\langle a,b\rjm :\J\oplus\M\to\J\oplus \M$
(in terms of $\cdot$ and $\bullet$) and
define a map $\partial_{m,n}
=\pjm_{m,n}:\J\oplus\M\to\J\oplus\M$ 
(in terms of $\star$, $\bullet$ and the triple product);
we also prove their main properties.

\begin{defi}\label{defpartial}
    Let $(\J,\M)$ be a $\J$-ternary superalgebra with
     operations $\cdot: \J\ot \J \to \J$, 
    $\bullet: \J\ot \M \to \M$, $\star: \M\ot \M \to \J$ and triple
     product $(-,-,-):\M^{\ot 3} \to \M$. 
     Consider elements $a,b,c,d\in \J$ and 
     $m,n,r,s\in\M$ (homogeneous of some degree).
{\bf Define}
$
\langle a,b\rjm , \pjm_{m,n}
\in\End(\J\oplus \M)$
 by
    \begin{enumerate}[label={$\mathsection$\roman*}]
        \item $\langle a,b\rjm  \act c :=\langle a,b\rj  \act c := a\cdot(b\cdot c) - (-1)^{|a||b|}b\cdot(a\cdot c)\in \J$,\label{itm: C1-Der(J+M)}

        \item $\langle a,b\rjm  \act m :=
        \frac14\big( a\bullet(b\bullet m) - (-1)^{|a||b|}b\bullet(a\bullet m)\big)\in \M$,\label{itm: C2-Der(J+M)}

        \item $2\pjm_{m,n}\act a := (-1)^{|a|(|m|+|n|)}(a\bullet m)\star n - (-1)^{|a||n|}m\star(a\bullet n)\in \J$,\label{itm: C3-Der(J+M)} or, under the convention
        $m\bullet a:=(-1)^{|a||m|}a\bullet m$,
\[
\pjm_{m,n}\act a =-\frac12\big(
m\star(n\bullet a)- (-1)^{|m|n|}n\star(m\bullet a)\big)
\]

        \item $\pjm_{m,n}\act{r} := \frac{1}{2}(m\star n)\bullet r - (m,n,r)\in \M$.\label{itm: C4-Der(J+M)}
    \end{enumerate}
\end{defi}

\begin{rem}
One can clearly see that, for $a,b\in\J$ and $m,n\in \M$
\[
\langle a,b\rjm=-(-1)^{|a||b|}
\langle b,a\rjm\]
\[
\pjm_{m,n}(a)=(-1)^{|m||n|}
\pjm_{n,m}(a)
\]
But also, if $r\in\M$ we have
\[
\pjm_{m,n}(r)=(-1)^{|m||n|}
\pjm_{n,m}(r)
\]
\end{rem}
\begin{proof}
By \ref{SJT4}
\[
(m,n,r)=(-1)^{|m||n|}(n,m,r)+(m\star n)\bullet r
\]
and using the definition of $\pjm$
\[
(m,n,r)=\tfrac{1}{2}(m\star n)\bullet r 
- \pjm_{m,n}\act{r}
\]
we conclude
\[
\overbrace{
\tfrac{1}{2}(m\star n)\bullet r 
- \pjm_{m,n}\act{r}}^{(m,n,r)}
=(-1)^{|m||n|}
\overbrace{\Big(
\tfrac{1}{2}(n\star m)\bullet r 
- \pjm_{n,m}\act{r}\Big)}^{(n,m,r)}
+(m\star n)\bullet r
\]
\[
=(-1)^{|m||n|}
\tfrac{1}{2}(n\star m)\bullet r 
- (-1)^{|m||n|}\pjm_{n,m}\act{r}
+(m\star n)\bullet r\]
Since $m\star n=-(-1)^{|m||n|}n\star m$, we have
\[
\tfrac{1}{2}(m\star n)\bullet r 
- \pjm_{m,n}\act{r}
=-\tfrac{1}{2}(m\star n)\bullet r 
- (-1)^{|m||n|}\pjm_{n,m}\act{r}
+(m\star n)\bullet r\]
hence,
$ \pjm_{m,n}\act{r}
=(-1)^{|m||n|}\pjm_{n,m}\act{r}$.
\end{proof}

\begin{lem}\label{lema3.3}
The maps $\langle a,b\rjm $ and $\pjm_{m,n}$
    superderive 
    the operations $\cdot,\bullet$ and $\star$. More
    precisely,
 \[
D (c\cdot d)=
D (c)\cdot d
+(-1)^{|D||c|}
 c\cdot D ( d)
 \]
\[
D (c\bullet r)=
D (c)\bullet r
+(-1)^{|D||c|} c\bullet D ( r)
 \]
\[
D (r\star s)=
D (r)\star s+(-1)^{|D||r|}
 r\star D ( s)
 \]
where either $D=\langle a,b\rjm $ and $|D|=|a|+|b|$
or $D=\pjm_{m,n}$ and $|D|=|m|+|n|$.
\end{lem}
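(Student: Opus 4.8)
The plan is to verify six Leibniz identities at once: each of the two families $D=\langle a,b\rjm$ (of degree $|a|+|b|$) and $D=\pjm_{m,n}$ (of degree $|m|+|n|$) must be shown to derive each of $\cdot$, $\bullet$ and $\star$. Since both maps send $\J\to\J$ and $\M\to\M$, I would first fix, in each identity, which summand every argument lives in: $c\cdot d$ has $c,d\in\J$; $c\bullet r$ has $c\in\J$, $r\in\M$; and $r\star s$ has $r,s\in\M$ with $r\star s\in\J$. The whole proof then reduces to rewriting both sides through the defining formulas \ref{itm: C1-Der(J+M)}--\ref{itm: C4-Der(J+M)} and collapsing them using the axioms SJT1--SJT5; the guiding observation is that each axiom is precisely the polarized form of one Leibniz rule, once the triple product is replaced by $(m,n,s)=\tfrac12(m\star n)\bullet s-\pjm_{m,n}(s)$ as in \ref{itm: C4-Der(J+M)}.

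For $D=\langle a,b\rjm$ the three cases are comparatively routine. On $\J$ the map is $\langle a,b\rj$ and the derivation rule for $\cdot$ is exactly Lemma \ref{lema1}(1). For $\bullet$, I would pass to the operators $L_x:=x\bullet(-)\in\End(\M)$ and use that $x\mapsto L_x$ is a morphism into $\End(\M)^+$ (Definition \ref{Def: J-super-mod}, i.e. \eqref{eq: (a.b)bul m}), which reads $L_{x\cdot y}=\tfrac12\bigl(L_xL_y+(-1)^{|x||y|}L_yL_x\bigr)$; since $\langle a,b\rjm|_\M=\tfrac14[L_a,L_b]$, the required rule becomes the purely associative identity $\tfrac14\,[[L_a,L_b],L_c]=L_{\langle a,b\rj c}$, which follows by substituting the morphism property and cancelling. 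For $\star$, the left side is $\langle a,b\rj(r\star s)$ with $r\star s\in\J$; applying \ref{SJT1} twice to strip the two $\cdot$-multiplications turns it into iterated $\bullet$-actions on $r$ and $s$, and the cross terms $(b\bullet r)\star(a\bullet s)$, $(a\bullet r)\star(b\bullet s)$ cancel after antisymmetrization, leaving exactly $(\langle a,b\rjm r)\star s+(-1)^{(|a|+|b|)|r|}r\star(\langle a,b\rjm s)$.

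For $D=\pjm_{m,n}$ the computations are heavier and form the main obstacle. The cleanest is the $\star$-rule: substituting \ref{itm: C4-Der(J+M)} into \ref{SJT5} produces $-\pjm_{m,n}(r)\star s-(-1)^{|r|(|m|+|n|)}r\star\pjm_{m,n}(s)$ together with pure $\star$-terms, and two applications of \ref{SJT1} (to $(m\star n)\cdot(r\star s)$ and to $(r\star s)\cdot(m\star n)$) combined with supercommutativity of $\cdot$ identify the remainder with $\pjm_{m,n}(r\star s)$ computed from \ref{itm: C3-Der(J+M)}. The $\bullet$-rule is the hard part: I would expand $(m,n,c\bullet r)$ by \ref{SJT2} (with $a=c$) and \ref{itm: C4-Der(J+M)}; the triples $(c\bullet m,n,r)$ and $(m,c\bullet n,r)$ then split into a ``$(m\star n)$-acting'' piece that reassembles into $\pjm_{m,n}(c)\bullet r$ via \ref{itm: C3-Der(J+M)}, plus a residual involving the index-shifted maps $\pjm_{c\bullet m,n}$ and $\pjm_{m,c\bullet n}$, which one must recognize—through \ref{SJT1} and \eqref{eq: (a.b)bul m}—as $\tfrac12\bigl(L_{m\star n}L_c-(-1)^{|c|(|m|+|n|)}L_cL_{m\star n}\bigr)$ applied to $r$, forcing the last term $\tfrac12(m\star n)\bullet(c\bullet r)$ to match. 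The $\cdot$-rule on $\J$ is analogous: starting from \ref{itm: C3-Der(J+M)} for $c\cdot d$, expand $(c\cdot d)\bullet m$ by \eqref{eq: (a.b)bul m} and use \ref{SJT1} to turn the resulting $\star$-products back into $\pjm_{m,n}(c)\cdot d$ and $c\cdot\pjm_{m,n}(d)$.

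Throughout, the difficulty is not conceptual but bookkeeping: keeping the Koszul signs straight and, for $\pjm_{m,n}$, chaining several axioms (\ref{SJT2} or \ref{SJT5} with \ref{SJT1} and the special-module identity) so that all triple-product and index-shifted terms cancel. I expect the $\pjm_{m,n}$-derivation-of-$\bullet$ case to absorb most of the work, whereas the three $\langle a,b\rjm$ cases and the $\pjm_{m,n}$-derivation-of-$\star$ case should fall out almost mechanically from the correspondences identified above.
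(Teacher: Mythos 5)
Your case-by-case plan coincides, for five of the six verifications, with the paper's own proof: Lemma \ref{lema1} for $\langle a,b\rjm$ on $\cdot$; the special-supermodule identity \eqref{eq: (a.b)bul m} for $\langle a,b\rjm$ on $\bullet$ (your operator identity $\tfrac14[[L_a,L_b],L_c]=L_{\langle a,b\rj(c)}$ is just a compact rewriting of the paper's element-wise computation); \ref{SJT1} applied twice with cancellation of the cross terms for $\langle a,b\rjm$ on $\star$; \ref{itm: C3-Der(J+M)} together with \ref{SJT1} for $\pjm_{m,n}$ on $\cdot$; and \ref{itm: C4-Der(J+M)} substituted into \ref{SJT5}, then \ref{SJT1} plus supercommutativity, for $\pjm_{m,n}$ on $\star$. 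All of these are correct as sketched.

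The gap is in the case you yourself single out as the main obstacle: $\pjm_{m,n}$ deriving $\bullet$. After expanding $(m,n,c\bullet r)$ by \ref{SJT2} and \ref{itm: C4-Der(J+M)} and reassembling the $\star$-pieces into $\pjm_{m,n}(c)\bullet r$ via \ref{itm: C3-Der(J+M)}, the residual you still have to prove is
\[
(-1)^{|c|(|m|+|n|)}\pjm_{c\bullet m,n}(r)-(-1)^{|c||n|}\pjm_{m,c\bullet n}(r)
=\tfrac12(m\star n)\bullet(c\bullet r)-\tfrac12(-1)^{|c|(|m|+|n|)}c\bullet\big((m\star n)\bullet r\big),
\]
which is exactly the $\J$-ternary analogue of \eqref{eq: angle-partial} evaluated on $\M$. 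You assert that this is recognized ``through \ref{SJT1} and \eqref{eq: (a.b)bul m}'', but that cannot work: neither of those two axioms mentions the triple product, while each $\pjm_{-,-}(r)$ on the left-hand side does (through \ref{itm: C4-Der(J+M)}). Unwinding the definitions, the displayed identity equates $(c\bullet m,n,r)-(-1)^{|c||m|}(m,c\bullet n,r)$ with pure $\bullet$/$\star$ expressions, and this is information of the same nature as \ref{SJT2} and \ref{SJT4}, not of \ref{SJT1}; so the final step of your hardest case does not close as stated. The repair is what the paper does: use \ref{SJT4} together with \ref{itm: C4-Der(J+M)} to write $2\pjm_{m,n}(r)=-(m,n,r)-(-1)^{|m||n|}(n,m,r)$ (supersymmetric in $m,n$), expand the Leibniz defect using \ref{SJT2}, and cancel the remaining four triple-product terms in pairs by \ref{SJT4}. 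Equivalently, first establish \eqref{eq: angle-partial} on $\M$ from \ref{SJT2} and \ref{SJT4} --- as the paper does in the proof of Theorem \ref{Teor: sJ-tern to til(g)} --- after which your residual identification becomes legitimate.
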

\begin{proof}
    We know that $\langle a,b\rjm$ superderives to $\cdot:\J\ot\J \to \J$. On other hand, since $\M$ is a special $\J$-supermodule we have
    \begin{align*}
        4\langle a,b\rjm\act{c}\bullet m &= a\bullet (b\bullet (c\bullet m)) + (-1)^{|a|(|b|+|c|)+|b||c|}c\bullet (b\bullet (a\bullet m))-(-1)^{|a||b|}b\bullet (a\bullet (c\bullet m))\\
        &\quad -(-1)^{(|a|+|b|)|c|}c\bullet (a\bullet (b\bullet m))
    \end{align*}
    \begin{align*}
        (-1)^{(|a|+|b|)|c|}4c\bullet\langle a,b\rjm\act{m} &= (-1)^{(|a|+|b|)|c|}c\bullet(a\bullet(b\bullet m))-(-1)^{(|a|+|b|)|c|+|a||b|}c\bullet(b\bullet(a\bullet m))
    \end{align*}
    Therefore,
    \begin{align*}
        4\langle a,b\rjm\act{c}\bullet m + (-1)^{(|a|+|b|)|c|}4c\bullet\langle a,b\rjm\act{m} &= a\bullet (b\bullet (c\bullet m))-(-1)^{|a||b|}b\bullet(a\bullet(c\bullet m))\\
        &= 4\langle a,b\rjm\act{c\bullet m}
    \end{align*}
    Similarly, from the equation \eqref{itm: C1-Der(J+M)} and \eqref{SJT1} it follows  that
    \begin{align*}
        \langle a,b\rjm\act{m\star n} = \langle a,b\rjm\act{m}\star n + (-1)^{(|a|+|b|)|m|}m\star \langle a,b\rjm\act{n}.
    \end{align*}
    Hence, $\langle a,b\rjm$ derives 
    $(\J\oplus \M,\cdot,\bullet,\star)$. On other hand, 
    from \eqref{itm: C3-Der(J+M)} and \eqref{SJT1} we
     see that:
    \begin{align*}
        &\pjm_{m,n}\act{a}\cdot b + (-1)^{|a|(|m|+|n|)}a\cdot \pjm_{m,n}\act{b}\\
        &= (-1)^{(|a|+|b|)(|m|+|n|)+|a||b|}\tfrac{1}{4}(b\bullet (a\bullet m))\star n - (-1)^{(|a|+|b|)|n|+|a||b|}\tfrac{1}{4}m\star(b\bullet(a\bullet n))\\
        &\quad + (-1)^{(|a|+|b|)(|m|+|n|)}\tfrac{1}{4}(a\bullet(b\bullet m))\star n - (-1)^{(|a|+|b|)|n|}\tfrac{1}{4}m\star (a\bullet(b\bullet n))\\
        &= \pjm_{m,n}\act{a\cdot b}
    \end{align*}
    Applying the equation \eqref{itm: C4-Der(J+M)} to \eqref{SJT5} we have:
    \begin{align*}
        \tfrac{1}{2}((m\star n)\bullet r)\star s - \pjm_{m,n}\act{r}\star s + (-1)^{|r|(|m|+|n|)}\tfrac{1}{2} &r\star((m\star n)\bullet s)-(-1)^{|r|(|m|+|n|)}r\star\pjm_{m,n}\act{s}\\
        &= (-1)^{|n|(|r|+|s|)}m\star((r\star s)\bullet n) \quad (*)
    \end{align*}
    By the super-commutativity in $\cdot:\J\ot\J\to \J$ and the axiom \eqref{SJT1} we note
    \begin{align*}
        ((m\star n)\bullet r)\star s + &(-1)^{|r|(|m|+|n|)}r\star((m\star n)\bullet s) =(-1)^{(|r|+|s|)(|m|+|n|)}(r\star s)\cdot(m\star n)\\
        &= (-1)^{(|r|+|s|)(|m|+|n|)}((r\star s)\bullet m)\star n + (-1)^{(|r|+|s|)|n|}m\star((r\star s)\bullet n)
    \end{align*}
    Therefore, the equation $(*)$ is equivalent to:
    \begin{align*}
        2\pjm_{m,n}\act{r}\star s + (-1)^{|r|(|m|+|n|)}2r\star\pjm_{m,n}\act{s} &= (-1)^{(|r|+|s|)(|m|+|n|)}((r\star s)\bullet m)\star n\\ 
        &\quad + (-1)^{(|r|+|s|)|n|}m\star((r\star s)\bullet n)\\
        &= 2\pjm_{m,n}\act{r\star s}
    \end{align*}
    Finally, from \eqref{SJT4} and \eqref{itm: C4-Der(J+M)} we get:
    \begin{align*}
        2\pjm_{m,n}\act{r} = -(m,n,r)-(-1)^{|m||n|}(n,m,r)
    \end{align*}
    which is super-symmetric in $m, n\in \M$. By this equation together \eqref{SJT2}:
    \begin{align*}
        2\pjm_{m,n}&\act{a\bullet r}-2\pjm_{m,n}\act{a}\bullet r-(-1)^{|a|(|m|+|n|)}2a\bullet \pjm_{m,n}\act{r}\\
        &= -(m,n,a\bullet r) - (-1)^{|m||n|}(n,m,a\bullet r) - (-1)^{|a|(|m|+|n|)}((a\bullet m)\star n)\bullet r\\
        &\quad + (-1)^{|a||n|}(m\star(a\bullet n))\bullet r + (-1)^{|a|(|m|+|n|)}\big(a\bullet(m,n,r) + (-1)^{|m||n|}a\bullet(n,m,r)\big)\\
        &= -(-1)^{|a|(|m|+|n|)}((a\bullet m)\star n)\bullet r + (-1)^{|a||n|}(m\star(a\bullet n))\bullet r + (-1)^{|a|(|m|+|n|)}(a\bullet m,n,r)\\
        &\quad - (-1)^{(|a|+|n|)|m|}(n,a\bullet m,r) - (-1)^{|a||n|}(m,a\bullet n,r) + (-1)^{|a|(|m|+|n|)+|m||n|}(a\bullet n,m,r)\\
        &= 0
    \end{align*}
    where the final equality is given by \eqref{SJT4}. Therefore, $\pjm_{m,n}$ also super-derives $(\J\oplus \M,\cdot,\bullet,\star)$. 
\end{proof}   

Moreover, the derivations $\pjm_{m,n}$ also 
``derives themselves'', in the following sense:
\begin{lem}\label{lemaDpartial}
Let $\big(\J,\M,\cdot,\bullet,\star,(-,-,-)\big)$
be a $\J$-ternary superalgebra, $m,n,r,s,t\in\M$, and denote 
$D:=\pjm_{m,n}|_\M:\M\to\M$. Then
\[
D(\pjm_{r,s}(t))=
\pjm_{D(r),s}(t)
+(-1)^{|D||r|}\pjm_{r,D(s)}(t)
+(-1)^{|D|(|r|+|s|)}\pjm_{r,s}(D(t))
\]
\end{lem}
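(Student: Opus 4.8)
The plan is to reduce the statement to the claim that $D=\pjm_{m,n}|_\M$ super-derives the \emph{triple product}, and then to obtain that claim by symmetrizing axiom \ref{SJT6} over the pair $(m,n)$. Throughout write $|D|=|m|+|n|$, and abbreviate $L_{p,q}:=(p,q,-):\M\to\M$.

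First I would carry out the reduction, mimicking the end of the proof of Theorem \ref{Teor: (sJ,sM) as sJ-tern} but now in the abstract $\J$-ternary setting. By \ref{itm: C4-Der(J+M)} we have $\pjm_{r,s}(t)=\tfrac12(r\star s)\bullet t-(r,s,t)$, so the asserted identity splits into a part involving $(r\star s)\bullet t$ and a part involving the triple product. For the first part, Lemma \ref{lema3.3} tells us that $D$ super-derives both $\star$ and $\bullet$, hence super-derives the composite, giving
\[
D\big((r\star s)\bullet t\big)=(D(r)\star s)\bullet t+(-1)^{|D||r|}(r\star D(s))\bullet t+(-1)^{|D|(|r|+|s|)}(r\star s)\bullet D(t).
\]
Expanding every $\pjm$ on the right-hand side of the Lemma again through \ref{itm: C4-Der(J+M)} and matching the $\tfrac12(\cdot\star\cdot)\bullet\cdot$ contributions against this derivation formula, the Lemma becomes equivalent to
\[
D\big((r,s,t)\big)=(D(r),s,t)+(-1)^{|D||r|}(r,D(s),t)+(-1)^{|D|(|r|+|s|)}(r,s,D(t)),
\]
that is, to the statement that $D$ super-derives the triple product.

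To prove this last identity I would invoke the formula $2\pjm_{m,n}(x)=-(m,n,x)-(-1)^{|m||n|}(n,m,x)$ established inside the proof of Lemma \ref{lema3.3}; equivalently $2D=-\big(L_{m,n}+(-1)^{|m||n|}L_{n,m}\big)$. Applying \ref{SJT6} to $L_{m,n}\big((r,s,t)\big)$, and its $m\leftrightarrow n$ swap to $L_{n,m}\big((r,s,t)\big)$, then adding the two with weights $-\tfrac12$ and $-\tfrac12(-1)^{|m||n|}$, the first and third slots reassemble directly into $(D(r),s,t)$ and $(-1)^{|D|(|r|+|s|)}(r,s,D(t))$ by multilinearity. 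The middle slot is the delicate point: in \ref{SJT6} it carries $L_{n,m}$ rather than $L_{m,n}$, so only after the symmetrization do the two middle terms recombine, via $L_{n,m}(s)+(-1)^{|m||n|}L_{m,n}(s)=-2(-1)^{|m||n|}D(s)$, into $(-1)^{|D||r|}(r,D(s),t)$; this uses the identity $(-1)^{|r|(|m|+|n|)+|m||n|}\cdot(-1)^{|m||n|}=(-1)^{|r||D|}$.

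I expect the only genuine obstacle to be sign bookkeeping, concentrated in that middle slot: one must check that the ``twisted'' middle term of \ref{SJT6} turns into an honest derivation term for $\pjm$ precisely because $\pjm_{m,n}$ is the super-symmetrization of $L_{m,n}$ in $(m,n)$. Everything else is a routine application of Lemma \ref{lema3.3} together with the defining formula \ref{itm: C4-Der(J+M)}.
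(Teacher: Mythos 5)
Your proposal is correct, and its first half is exactly the reversal the paper gestures at: the paper's proof of this Lemma is the one-line instruction to reverse the part of the proof of Theorem \ref{Teor: (sJ,sM) as sJ-tern} concerning \eqref{SJT6}, and your reduction --- expanding every $\pjm$ via \ref{itm: C4-Der(J+M)} and cancelling the $\tfrac12(\cdot\star\cdot)\bullet\cdot$ contributions using Lemma \ref{lema3.3} so that the Lemma becomes the statement that $D$ super-derives the triple product --- is precisely that reversal. Where you genuinely diverge is in how you then deduce this derivation property from the axioms. The paper's (reversed) route expands the triple products occurring in \eqref{SJT6} back into $\pjm$'s and $\star/\bullet$ terms and disposes of the latter by invoking \eqref{SJT2} with $a=\tfrac12\,m\star n$, together with the super-symmetries $\pjm_{n,m}=(-1)^{|m||n|}\pjm_{m,n}$ and $n\star m=-(-1)^{|m||n|}m\star n$. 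You instead never leave the ternary product: you use $2D=-\big((m,n,-)+(-1)^{|m||n|}(n,m,-)\big)$ (the consequence of \eqref{SJT4} recorded inside the proof of Lemma \ref{lema3.3}) and super-symmetrize \eqref{SJT6} over $(m,n)$, so that the twisted middle slot $(r,(n,m,s),t)$ recombines into an honest derivation term; your sign bookkeeping there is right, since $(-1)^{|r|(|m|+|n|)+|m||n|}\cdot(-1)^{|m||n|}=(-1)^{|r||D|}$ and the two middle terms indeed sum to $(-1)^{|r||D|}(r,D(s),t)$. The trade-off: the paper's route recycles a computation already written, but (unwound) it consumes \eqref{SJT2} in addition to \eqref{SJT4} and \eqref{SJT6}; yours uses only \eqref{SJT4} and two instances of \eqref{SJT6}, never touching $\star$, $\bullet$ or \eqref{SJT2} in the key step, which makes transparent that the derivation property is a formal consequence of $\pjm_{m,n}$ being the super-symmetrization of $(m,n,-)$.
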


\begin{proof}
Just reverse the arguments in the proof of Theorem 
\ref{Teor: (sJ,sM) as sJ-tern}, the part concerning
condition \ref{SJT6}.
\end{proof}

Similarly, $\langle a,b\rjm$ ``derives'' the operation
$m\ot n\ot r\mapsto \pjm_{m,n}(r)$. More precisely:
\begin{lem}\label{lemaDangle}
Keep notation as in the previous lemma. Let $a,b\in\J$, $m,n,r\in\M$ 
and consider $D:=\langle a,b\rjm|_\M:\M\to\M$, then
\[
D(\pjm_{m,n}(r))=
\pjm_{D(m),n}(r)
+(-1)^{|D||m|}\pjm_{m,D(n)}(r)
+(-1)^{|D|(|m|+|n|)}\pjm_{m,n}(D(r))
\]

\end{lem}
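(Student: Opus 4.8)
The plan is to reduce the statement to the single fact that $D:=\langle a,b\rjm|_\M$ \emph{super-derives the triple product}, and then to extract that fact from axiom \ref{SJT2}. First I would record the reduction. By Definition \ref{defpartial}, relation \eqref{itm: C4-Der(J+M)}, one has $\pjm_{m,n}(r)=\tfrac12(m\star n)\bullet r-(m,n,r)$, and by Lemma \ref{lema3.3} the operator $D$ super-derives both $\star$ and $\bullet$ (with $|D|=|a|+|b|$). Expanding $D\big((m\star n)\bullet r\big)$ by the Leibniz rule for $\bullet$ and then for $\star$ produces exactly the three terms of the form $\tfrac12(p\star q)\bullet t$ that occur in $\pjm_{Dm,n}(r)$, in $(-1)^{|D||m|}\pjm_{m,Dn}(r)$ and in $(-1)^{|D|(|m|+|n|)}\pjm_{m,n}(Dr)$. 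Comparing the two sides using the definition of $\pjm$ again, these half-star-bullet terms match identically, so the asserted identity is equivalent to
\[
D(m,n,r)=(Dm,n,r)+(-1)^{|D||m|}(m,Dn,r)+(-1)^{|D|(|m|+|n|)}(m,n,Dr),
\]
that is, to $D$ being an ordinary (untwisted) super-derivation of $(-,-,-)$.

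Next I would prove this derivation property from \ref{SJT2}. Writing $L_c:=c\bullet-\in\End(\M)$, Definition \ref{defpartial}, relation \eqref{itm: C2-Der(J+M)}, gives $D=\langle a,b\rjm|_\M=\tfrac14[L_a,L_b]$ with the graded commutator $[L_a,L_b]=L_aL_b-(-1)^{|a||b|}L_bL_a$. Axiom \ref{SJT2} states precisely that each $L_c$ acts on the triple product as a \emph{twisted} derivation, namely $L_c(m,n,s)=(L_cm,n,s)-(-1)^{|c||m|}(m,L_cn,s)+(-1)^{|c|(|m|+|n|)}(m,n,L_cs)$, whose middle term carries an extra minus sign. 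The key computation is to apply this rule twice and form the commutator. In the three ``same-slot'' contributions $L_a$ and $L_b$ hit one and the same argument; in the middle slot each application contributes the extra sign, so the two signs multiply to $+1$ and the twist cancels, leaving $\tfrac14[L_a,L_b]$ acting with the plain Koszul sign $(-1)^{|D||m|}$. This is exactly the displayed untwisted Leibniz rule.

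The main obstacle is the sign bookkeeping in this commutator computation, specifically checking that the six ``cross-slot'' terms, where $L_a$ and $L_b$ land in different arguments, cancel in the difference $L_aL_b-(-1)^{|a||b|}L_bL_a$. I would handle this by pairing each cross term coming from $L_aL_b(m,n,s)$ with the corresponding one from $(-1)^{|a||b|}L_bL_a(m,n,s)$ and verifying that, after commuting $L_a$ or $L_b$ past the argument it skips (which produces the Koszul signs), the two coefficients coincide and cancel; the twist sign enters each such pair an even number of times, so it does not obstruct the cancellation. Once the cross terms vanish and the same-slot terms are assembled as above, dividing by $4$ yields the derivation property for $D$, and combined with the reduction of the first paragraph this proves the lemma. (Alternatively one could mirror the \ref{SJT6}-based argument used in Theorem \ref{Teor: (sJ,sM) as sJ-tern} and Lemma \ref{lemaDpartial}, but routing the proof through \ref{SJT2} is more direct here, since $\langle a,b\rjm$ is built from $\bullet$.)
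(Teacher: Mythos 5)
Your proposal is correct and takes essentially the same route as the paper: both proofs rest on the identity $\pjm_{m,n}(r)=\tfrac12(m\star n)\bullet r-(m,n,r)$ together with Lemma \ref{lema3.3} to pass between the Leibniz rule for $\pjm$ and the (untwisted) Leibniz rule for the triple product, and both extract the latter from \ref{SJT2} applied to $D=\tfrac14\bigl(a\bullet(b\bullet -)-(-1)^{|a||b|}b\bullet(a\bullet -)\bigr)$. The only difference is one of presentation: you run the reduction first and then spell out the graded-commutator computation (same-slot twist signs squaring to $+1$, cross-slot terms cancelling), whereas the paper states that derivation property as ``clear'' and then performs the conversion.
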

\begin{proof}
Recall from \ref{SJT2} we have
\[
a\bullet(m,n,s) = (a\bullet m,n,s) - (-1)^{|a||m|}(m,a\bullet n,s) + (-1)^{|a|(|m|+|n|)}(m,n,a\bullet s)
\]
for any $a\in \J$, $m,n,r\in \M$.
Recall also by definition 
\[
4\langle a,b\rjm (m)=a\bullet (b\bullet m)-
(-1)^{|a||b|}b\bullet(a\bullet m)
\]
Then is clear that, for $D=\langle a,b\rjm$, the following equality holds:
\[
D (m,n,r) =
 \big(D ( m),n,r\big) 
 + (-1)^{(|a|+|b|)|m|}\big(m,D ( n),r\big) 
 + (-1)^{(|a|+|b|)(|m|+|n|)}\big(m,n,D ( r)\big)
\]
Now we use the definition of $\pjm$:
\[
\pjm_{m,n}\act{r} = \frac{1}{2}(m\star n)\bullet r - (m,n,r)
\]
or requivalently $(m,n,r)=\frac{1}{2}(m\star n)\bullet r 
- \pjm_{m,n}\act{r}$, so
\begin{align*}
\frac{1}{2}D\big((m\star n)\bullet r\big)
-D\big(\pjm_{m,n}(r)\big) &=
 \frac12(D(m)\star n)\bullet\act{r}
- \pjm_{D ( m),n}(r)\\
&
+ (-1)^{(|a|+|b|)|m|}
\big(\pjm_{m,D(n)}(r)- \frac12(m\star D( n))\bullet\act{r}
\big)
\\
&
+(-1)^{(|a|+|b|)(|m|+|n|)}\big(
\pjm_{m,n}(D(r))- \frac12(m\star n)\bullet\act{D(r)}
\big)
\end{align*}
But from the previous Lemma we know that $D$ of
$\star$ and $\bullet$, so (and using antisymetry of 
$\star$)
\[
-D \big(\pjm_{m,n}(r)\big)=
- \pjm_{D ( m),n}(r)
- (-1)^{(|a|+|b|)|m|}
\pjm_{m,D(n)}(r)
- 
(-1)^{(|a|+|b|)(|m|+|n|)}
\pjm_{m,n}(D(r))
\]
\end{proof}

\subsubsection{Bracket formulas of the inner derivations}

Now we can conclude the following bracket formulas
for this type of derivations:

\begin{coro}\label{corobracket}
    Let $(\J,\M)$ be a $\J$-ternary superalgebra with
    operations $\cdot: \J\ot \J \to \J$, 
    $\bullet: \J\ot \M \to \M$, $\star: \M\ot \M \to \J$ and triple
    product $(-,-,-):\M^{\ot 3} \to \M$. 
    Then, the vector subspace of $\End(\J\oplus\M)$
spanned by the endomorphisms $\langle a,b\rjm$
and $\pjm_{m,n}$ is closed under (super) 
bracket (with respect to composition), that is,  they form a Lie
subalgebra.
 Moreover, if $D=\langle a',b'\rjm$ or $D=\pjm_{m',n'}$ then
 \begin{align*}
    [D, \langle a, b\rjm ] 
    &= \big\langle D(a),b\big\rjm
    +(-1) ^{|a||D|} \big\langle a,
   D(b)\big\rjm\\
    [D, \pjm_{m,n} ] 
    &= \pjm_{D(m),n}
    +(-1) ^{|m||D|} 
    \pjm_{m,D(n)}
    \end{align*}
    \end{coro}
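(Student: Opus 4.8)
The plan is to establish the two displayed bracket identities as equalities in $\End(\J\oplus\M)$; once they hold, closure of the span under the super-commutator $[D,E]=D\circ E-(-1)^{|D||E|}E\circ D$ is automatic, since each identity writes $[D,\langle a,b\rjm]$ (resp.\ $[D,\pjm_{m,n}]$) as a combination of generators of the same kind. The key general fact I will use throughout is Lemma \ref{lema3.3}: every generator $D\in\{\langle a',b'\rjm,\ \pjm_{m',n'}\}$ is a superderivation of $(\J\oplus\M,\cdot,\bullet,\star)$. Since $\J\oplus\M$ is a direct sum and all maps are linear, it suffices to check each identity after evaluating on a homogeneous $c\in\J$ and on a homogeneous $r\in\M$ separately.

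For the first identity, on the summand $\J$ one has $\langle a,b\rjm(c)=\langle a,b\rj(c)=a\cdot(b\cdot c)-(-1)^{|a||b|}b\cdot(a\cdot c)$, a word in $\cdot$ alone. Applying $D$ and distributing it over $\cdot$, the two terms in which $D$ lands on the innermost $c$ assemble into $(-1)^{|D|(|a|+|b|)}\langle a,b\rjm(D(c))$ and cancel the second summand of $[D,\langle a,b\rjm](c)$, while the terms in which $D$ lands on $a$ or $b$ regroup (after moving $D$ past $a$ with the Koszul sign) into $\langle D(a),b\rj(c)+(-1)^{|a||D|}\langle a,D(b)\rj(c)$; this reproduces, for an arbitrary superderivation, the computation in the remark following Lemma \ref{lema1}. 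On the summand $\M$ the same manipulation applies verbatim with $\bullet$ in place of $\cdot$, using formula \ref{itm: C2-Der(J+M)} (the scalar $\tfrac14$ being inert) and the fact that $D$ superderives $\bullet$.

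For the second identity, the $\M$-component is already available: rearranging Lemma \ref{lemaDpartial} (if $D=\pjm_{m',n'}$) or Lemma \ref{lemaDangle} (if $D=\langle a',b'\rjm$) to isolate $D\circ\pjm_{m,n}-(-1)^{|D|(|m|+|n|)}\pjm_{m,n}\circ D$ yields exactly $\pjm_{D(m),n}(r)+(-1)^{|m||D|}\pjm_{m,D(n)}(r)$ for $r\in\M$. On the summand $\J$ I would start from formula \ref{itm: C3-Der(J+M)}, which expresses $\pjm_{m,n}(a)$ through $\bullet$ and $\star$ only; applying $D$, using that $D$ superderives both operations (Lemma \ref{lema3.3}), and cancelling the terms where $D$ reaches $a$ against $(-1)^{|D|(|m|+|n|)}\pjm_{m,n}(D(a))$, the remaining terms collect into $\pjm_{D(m),n}(a)+(-1)^{|m||D|}\pjm_{m,D(n)}(a)$.

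Because the genuinely non-formal inputs --- that $\pjm$ derives $\pjm$, and that $\langle-,-\rjm$ derives $\pjm$ --- are exactly Lemmas \ref{lemaDpartial} and \ref{lemaDangle}, the remaining effort is the sign bookkeeping in the three derivation computations above. I expect the main obstacle to be the $\J$-component of the second identity, where $a$, $m$, $n$ all carry degrees that interact through both $\bullet$ and $\star$: here one must verify that the Koszul signs picked up as $D$ is commuted inward match $(-1)^{|m||D|}$ in the target, which is most safely confirmed by tracking the sign contributed by each factor that $D$ steps past.
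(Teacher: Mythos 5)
Your proposal is correct and follows essentially the same route as the paper's proof: the paper handles the three ``two-step word'' cases ($\langle a,b\rjm$ on $\J$ and on $\M$, and $\pjm_{m,n}$ on $\J$) by the same formal superderivation computation you describe, merely packaged in a uniform notation $D_{z_1,z_2}(z_3)=z_1*(z_2*z_3)-(-1)^{|z_1||z_2|}z_2*(z_1*z_3)$ combined with Lemma \ref{lema3.3}, and then reduces the only non-formal case --- $[D,\pjm_{m,n}]$ evaluated on $\M$ --- to Lemmas \ref{lemaDpartial} and \ref{lemaDangle}, exactly as you do.
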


\begin{proof}
Notice that if one define 
$m\bullet a:=(-1)^{|a||m|}a\bullet m$, then the definition of
\[
\langle a,b\rjm \act c = a\cdot(b\cdot c) - (-1)^{|a||b|}b\cdot(a\cdot c)\]
\[
4\langle a,b\rjm \act m = a\bullet(b\bullet m) - (-1)^{|a||b|}b\bullet(a\bullet m)
\]
\[2\pjm_{m,n}\act a = (-1)^{|a|(|m|+|n|)}(a\bullet m)\star n - (-1)^{|a||n|}m\star(a\bullet n)
\]
can be compacly written as
\[
D_{z_1,z_2}(z_3)=z_1*(z_2*z_3)-(-1)^{|z_1||z_2|}z_2*(z_1*z_3)
\]
for $z_1,z_2,z_3$ equal to $a,b,c$, or $a,b,m$, or
$m,n,a$, and $*$ the convenient operation $\cdot,\bullet$ or $\star$. Now if $\d:\J\oplus\M\to\J\oplus \M$ is another endomorphism that super derives the operations $\cdot,\bullet,\star$, then it is a standard computation that
\[
[\d,D_{z_1,z_2}](z_3)=\d\Big(z_1*(z_2*z_3)-(-1)^{|z_1||z_2|}(z_2*(z_1*z_3)\Big)
\]
\[
-
(-1)^{|\d|(|z_1|+|z_2|)}\Big(z_1*(z_2*d(z_3))-(-1)^{|z_1||z_2|}(z_2*(z_1*d(z_3))\Big)
\]
\[
=
\Big(
D_{d(z_1),z_2}
+(-1)^{|\d||z_1|}D_{z_1,d(z_2)}\Big)(z_3)
\]
So, it remains to prove the cases
\begin{align}
  [\langle a, b\rjm, \pjm_{m,n} ] (m')
  &\overset{?}{=} \pjm_{\langle a,b\rjm(m),n}(m')
    +(-1) ^{|m|(|a|+|b|)} 
    \pjm_{m,\langle a,b\rjm(n)}(m')
   \label{b1} \\
[\pjm_{m,n},\pjm_{r,s}](m')
  &  \overset{?}{=}\pjm_{\pjm_{m,n}(r),s}(m')
      +
      (-1) ^{|r|(|m|+|n|)} 
    \pjm_{r,\pjm_{m,n}(r)}(m')
\label{b2}
\end{align}
 For that, we consider
  $D:\M\to\M$ satisfying
 \[
 D\big(\pjm_{r,s}(t)\big)=
\pjm_{D(r),s}(t)
+(-1)^{|D||r|}\pjm_{r,D(s)}(t)
+(-1)^{|D|(|r|+|s|)}\pjm_{r,s}\big(D(t)\big)
\]
If this is the case,  then one easily see that
 \[
 [D,\pjm_{r,s}](m')=
 D\big(\pjm_{r,s}(m')\big)
 -(-1)^{|D|(|r|+|s|)}\pjm_{r,s}\big(D(t)\big)
 =
 \pjm_{D(r),s}(m')
+(-1)^{|D||r|}\pjm_{r,D(s)}(m')
\]
Now \eqref{b1} and
\eqref{b2}  are true
thanks to Lemmas \ref{lemaDangle} and \ref{lemaDpartial}, using 
$D=\langle a,b\rjm$ and $D=\pjm_{m,n}$ respectively.
\end{proof}    
    
\begin{defi}\label{Def: InnDer(J,M)}
The sub-vector space of $\End(\J\oplus\M)$
spanned by $\langle a,b\rjm$ and $\pjm_{m,n}$ 
will be called the Lie algebra of Inner Derivations
of $\big(\J,\M,\cdot,\bullet,\star,(-,-,-)\big)$ and will
be denoted by $\InnDer(\J,\M)$.
\end{defi}

\subsubsection{$\Tkk(\J,\M)$}
The so-called $\Tkk$-construction for Jordan algebras
is given by the following (see \cite{S22} for an
exposition in the super case):

\[
\Tkk(\J):=\sl_2\ot\J\oplus\InnDer(\J)
\]
We generalize the construction super $J$-ternary objects
$\big(\J,\M,\cdot,\bullet,\star,(-,-,-)\big)$.

\begin{Teor}\label{Teor: sJ-tern to til(g)}
    If $(\J,\M)$ is a $\J$-ternary superalgebra with operations $\cdot: \J\ot \J \to \J$, $\bullet: \J\ot \M \to \M$, $\star: \M\ot \M \to \J$ and triple product $(-,-,-):\M^{\ot 3} \to \M$ then the vector superspace
    \[
    \g(\J,\M) = \sl_2\ot \J \; \oplus \; V\ot \M \; \oplus \; \Innder(\J,\M)
    \]
    is a short $\sl_2$-Lie superalgebra with brackets \eqref{eq: Brk-g-1}-\eqref{eq: Brk-g-5},
\end{Teor}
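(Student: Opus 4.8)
The plan is to reduce the whole statement to the characterization already obtained in Theorem~\ref{Teo: STits-1}, which says that a superspace carrying the isotypic decomposition \eqref{eq: sl2-corta tipo 1} and brackets of the form \eqref{eq: Brk-g-1}--\eqref{eq: Brk-g-5} is a short $\sl_2$-Lie superalgebra \emph{exactly} when the five families of conditions listed there hold. So the first step is to set up the data so as to land inside the hypotheses of that theorem: take $\D:=\InnDer(\J,\M)\subseteq\End(\J\oplus\M)$ of Definition~\ref{Def: InnDer(J,M)} with its supercommutator bracket, let $\sl_2$ act by the adjoint representation on the first tensor factor of $\sl_2\ot\J$, by the natural representation on the first factor of $V\ot\M$, and trivially on $\D$, and put $\langle a,b\rg:=\langle a,b\rjm$ and $\pg_{m,n}:=\pjm_{m,n}$. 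Since the adjoint and natural $\sl_2$-representations have no nonzero invariants, this presents $\g(\J,\M)$ in the form \eqref{eq: sl2-corta tipo 1} with $\D=\g(\J,\M)^{\sl_2}$; moreover items \eqref{itm: C1-Der(J+M)}--\eqref{itm: C4-Der(J+M)} of Definition~\ref{defpartial} show that $\langle a,b\rjm$ and $\pjm_{m,n}$ preserve the splitting $\J\oplus\M$, so the actions in \eqref{eq: Brk-g-1}--\eqref{eq: Brk-g-2} are well defined.

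With these identifications most of the required conditions are immediate. That $(\J,\cdot)$ is Jordan and that $\M$ is a special module are built into the hypothesis that $(\J,\M)$ is a $\J$-ternary superalgebra. The super-derivation identities \eqref{eq: d(a dot b)}--\eqref{eq: d(m star n)} are precisely Lemma~\ref{lema3.3}, while \eqref{eq: [d1,d2]} holds by construction, as $\D$ carries the commutator bracket inside $\End(\J\oplus\M)$. The bracket formulas \eqref{eq: [d,<a,b>]} and \eqref{eq: [d,partial_{m,n}]} are Corollary~\ref{corobracket}; the operator identities \eqref{eq: <a,b>-1 JM}, \eqref{eq: <a,b>-2 JM} and \eqref{eq: partial_{m,n}-1} are the defining equations \eqref{itm: C1-Der(J+M)}, \eqref{itm: C2-Der(J+M)}, \eqref{itm: C3-Der(J+M)}; equation \eqref{eq: <ab,c>} is the linearized Jordan identity \eqref{apuntobc}, valid by Lemma~\ref{lema1}; and \eqref{eq: a.(m*n)} is axiom \ref{SJT1}.

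This leaves the two genuinely mixed relations \eqref{eq: angle-partial} and \eqref{eq: partial_{m,n}-2}, which is where I expect the real work. Both are equalities of operators in $\End(\J\oplus\M)$, so I would establish them by evaluating separately on $c\in\J$ and on $r\in\M$ and expanding each side through Definition~\ref{defpartial}. On the $\J$-component the computation is driven by the compatibility axiom \ref{SJT1} tying together $\cdot$, $\bullet$ and $\star$; on the $\M$-component one unfolds the triple product via \eqref{itm: C4-Der(J+M)} and uses the remaining ternary axioms. For \eqref{eq: partial_{m,n}-2} in particular, substituting \eqref{itm: C4-Der(J+M)} reduces the claim to the relation $2\pjm_{m,n}(r)=-(m,n,r)-(-1)^{|m||n|}(n,m,r)$ obtained inside the proof of Lemma~\ref{lema3.3} together with the symmetry axioms \ref{SJT3} and \ref{SJT4}; for \eqref{eq: angle-partial} the $\M$-component appeals to \ref{SJT5}. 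These sign-bookkeeping checks are the main obstacle, but each is simply the reverse of a manipulation already performed in the proof of Theorem~\ref{Teor: (sJ,sM) as sJ-tern}.

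Two global checks then finish the argument. Super-antisymmetry of \eqref{eq: Brk-g-1}--\eqref{eq: Brk-g-5} follows from the (super)symmetry of $\cdot,\bullet,\star$ and the antisymmetry of $\langle-,-\rjm$ and $\pjm$ recorded just after Definition~\ref{defpartial}, yielding \eqref{eq: sconm-cdot JM}--\eqref{eq: santisim-partial JM}; and the super-Jacobi identity among three elements of $\D$ is automatic, since $\D=\InnDer(\J,\M)$ is a Lie subalgebra of $\End(\J\oplus\M)$ by Corollary~\ref{corobracket} and the supercommutator always satisfies super-Jacobi. Having verified all five conditions of Theorem~\ref{Teo: STits-1}, that theorem delivers at once that $\g(\J,\M)$ is a short $\sl_2$-Lie superalgebra with the brackets \eqref{eq: Brk-g-1}--\eqref{eq: Brk-g-5}, as claimed.
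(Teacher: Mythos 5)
Your proposal is correct and follows essentially the paper's own proof: the paper likewise reduces everything to Theorem~\ref{Teo: STits-1}, lets Definition~\ref{defpartial}, Lemma~\ref{lema3.3} and Corollary~\ref{corobracket} absorb the Jordan, module, derivation and bracket conditions, observes that \eqref{eq: <ab,c>}, \eqref{eq: a.(m*n)} and \eqref{eq: partial_{m,n}-2} are (equivalent to) the Jordan identity, \ref{SJT1} and \ref{SJT3} respectively, and then spends its only real computation on \eqref{eq: angle-partial}, evaluated separately on $\J$ and on $\M$. The one misattribution in your plan is the claim that the $\M$-component of \eqref{eq: angle-partial} appeals to \ref{SJT5}: expanding it via \eqref{itm: C2-Der(J+M)} and \eqref{itm: C4-Der(J+M)} leaves terms of the form $a\bullet((m\star n)\bullet r)$, $(a\bullet m,n,r)$ and $(m,a\bullet n,r)$, which are handled by \ref{SJT4} and \ref{SJT2} — exactly what the paper does — whereas \ref{SJT5} is a $\J$-valued identity whose actual role is inside Lemma~\ref{lema3.3} (it is what makes $\pjm_{m,n}$ a super-derivation of $\star$), a fact you already cite separately. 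With that one correction, your outline is the paper's proof.
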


\begin{Proof} We will check the conditions of Theorem
\ref{Teo: STits-1}.
Equation \eqref{eq: <ab,c>} follows from the fact
 that $(\J,\cdot)$ is a super Jordan algebra.
 Equation  \eqref{eq: a.(m*n)} is equivalent to \eqref{SJT1}
  and equation \eqref{eq: partial_{m,n}-2} is equivalent to
   \eqref{SJT3}. Therefore, we only need to prove 
   equation \eqref{eq: angle-partial}, which we will do
    it in two parts. First, note that \eqref{SJT1} and the 
    $\J$-supermodule structure of $\M$ implies:
    \begin{align*}
        4a\cdot ((m\star n)\cdot b) &= (-1)^{|b|(|m|+|n|)}
        (a\bullet(b\bullet m))\star n 
        + (-1)^{|b|(|m|+|n|)+|a|(|b|+|m|)}
        (b\bullet m)\star(a\bullet n)\\
        &\quad + (-1)^{|b||n|}(a\bullet m)\star(b\bullet n)
        +(-1)^{|b||n|+|a||m|)}m\star(a\bullet(b\bullet n))
    \end{align*}
    \begin{align*}
        4(m\star n)\cdot (a\cdot b) &= 
        (-1)^{|b|(|m|+|n|)}(a\bullet(b\bullet m))\star n 
+ (-1)^{|b|(|m|+|n|)+|a||b|}(b\bullet(a\bullet m))\star n\\
&\quad + (-1)^{|a||m|+|b||n|}m\star(a\bullet(b\bullet n))
+(-1)^{|a|(|m|+|b|)+|b||n|}m\star(b\bullet(a\bullet n))
    \end{align*}
    Therefore, by \eqref{itm: C1-Der(J+M)} and
     \eqref{itm: C3-Der(J+M)} we get:
    \begin{align*}
 4\langle a,m\star n\rjm\act{b} 
+ 2\pjm_{a\bullet m,n}\act{b}
-(-1)^{|a||m|}2\pjm_{m,a\bullet n}\act{b} = 0
    \end{align*}
    and \eqref{eq: angle-partial} holds for $\J$. On other hand,
     if $r\in \M$ then from \eqref{itm: C2-Der(J+M)} and
      \eqref{itm: C4-Der(J+M)} we get:
    \begin{align*}
        4\langle a,m\star n\rjm\act{r} &
        + 2\pjm_{a\bullet m,n}\act{r}
        -(-1)^{|a||m|}2\pjm_{m,a\bullet n}\act{r}\\
        &= a\bullet((m\star n)\bullet r)
        -(-1)^{|a|(|m|+|n|)}(m\star n)\bullet(a\bullet r)
        +((a\bullet m)\star n)\bullet r-2(a\bullet m,n,r)\\
        &\quad -(-1)^{|a||m|}(m\star(a\bullet n))\bullet r+(-1)^{|a||m|}2(m,a\bullet n,r)\\
        &=0
    \end{align*}
    where the final equality holds by \eqref{SJT4} and
     \eqref{SJT2} respectively. Therefore, 
     \eqref{eq: angle-partial} also holds for
      $\M$ and $\g(\J,\M)$ is a $\sl_2$-Lie superalgebra.
\end{Proof}

\begin{Rem}
    Let $\g  \in \ShsL$ and $(\J,\M)$ be the $\J$-ternary superalgebra obtained by Theorem \ref{teogsl2ternary}, then $\g $ and $\g(\J,\M)$ given by Theorem \ref{Teor: sJ-tern to til(g)} are not necessarily isomorphic.
    \begin{equation*}
    \xymatrix{
	\g  \ar@{~>}[rr]^{\text{Theor. \ref{teogsl2ternary}\qquad}} & & (\J,\M) 
    \ar@{~>}[rr]^{\hskip -0.5cm\text{Theor. \ref{Teor: sJ-tern to til(g)}}\qquad} & &	\g(\J,\M)=TKK(\J,\M)\\	
    }
    \end{equation*}
\end{Rem}

\section{The functors $\Tag$ and $\T$ extended to $\ShsL$ and $\JterS$ respectively}\label{Sec: Tag(J,M)-T(g)}

\subsection{$\Tag(\J)$: The TAG construction for super Jordan algebras
}\label{Subsec: Tag(J)}

The TKK construction for Jordan algebras
 and
for J-ternary algebras
\[
\Tkk(\J):=\sl_2\ot\J\oplus\Innder(\J),\hskip 1cm
\Tkk(\J,\M):=\sl_2\ot\J\oplus V\ot \M\oplus \Innder(\J,\M)
\]
are ``natural'' in a non technical sense, since they are not functorial. For instance, if $\J$ is a commutative and {\em associative} algebra that we see it as a Jordan algebra, then
for every $a,b\in\J$, the maps $\langle a,b\rj $ are
 the zero maps on $\J$. But if
$\J$ is a Jordan subalgebra of some bigger non-associative
Jordan algebra $\wt \J$, then $\langle a,b\rangle_{\wt\J} $
 may be non-zero on $\wt \J$. This problem of non functoriality for
$\Tkk(\J)$ is solved by the 
Tits-Allison-Gao construction $\Tag(\J)$.
In this section we will revise TAG construction for (super) Jordan algebras $\J$
and we will generalize it for (super) ternary algebras
$(\J,\M,(-,-,-))$.

The main ingredients is the \eqref{eq: <ab,c>} from Theorem
\ref{Teo: STits-1}
and an organizing argument given by  in \cite{S22}
that we will reproduce here in Lemma \ref{Lem: Bs-3} next.
 Recall:

\[
\langle a, b\rj(c)=a\cdot (b\cdot c)-(-1)^{|a||b|}b\cdot(a\cdot c)
\]
clearly satisfies $\langle a,b\rj=-(-1) ^{|a||b|}
\langle b,a\rj$, and \eqref{eq: <ab,c>}  says
\[
             \langle a\cdot b,c\rg  + (-1)^{|c|(|b|+|a|)}\langle c\cdot a,b\rg  + (-1)^{|a|(|c|+|b|)}\langle b\cdot c,a\rg  = 0
      \]
The TAG construction consists on considering
abstract symbols $\{a\ot b\}$
that will act on $\J$ as if they were $\langle a,b\rj$.
Because of the previous identities satisfied by $\langle a,b\rj$,
it is natural to consider the super vector space
$\J\ot\J$
and   $R^s(\J)$ be the subspace ``of relations''
   of $\J\ot\J$ spanned by
\begin{itemize}
    \item[] $I_{a,b} = a\ot b + (-1)^{|a||b|}b\ot a$,

    \item[] $I_{a,b,c} = (a\cdot b)\ot c + (-1)^{|c|(|a|+|b|)}(c\cdot a)\ot b + (-1)^{|a|(|b|+|c|)}(b\cdot c)\ot a$,
\end{itemize}
for all homogeneous elements $a,b,c\in \J$.
Define $\B^s(\J)$ as the vector superspace 
\[
\B^s(\J) = (\J \ot \J) / R^s(\J).
\]
We denote by $\{a\ot b\}$ the canonical image of $a\ot b$ in $\B^s(\J)$. Note that the first generator is defined from the fact that $\langle a,b\rj$ is super-antisymmetric and the second generator is defined from the equation \eqref{eq: <ab,c>}. Following \cite{S22} we get the following properties:

\begin{Lem}\label{Lem: Bs-1}
Under the natural action of $\Der(\J)$ on $\J\ot\J$:
    \[
    D\act{a\ot b} = D\act a\ot b + (-1)^{|D||a|}c\ot D\act b,
    \]
the map
    $\varphi: \B^s(\J) \to \Innder(\J)$ given by $\{ a\ot b\} \mapsto \langle a,b\rj$ is  well-defined, surjective and 
a $\Der(\J)$-graded module map. In particular, it is a graded 
$\Innder(\J)$-module map.
\end{Lem}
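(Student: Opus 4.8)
The plan is to verify the three claims about $\varphi$ in sequence: well-definedness (the only part requiring real work), surjectivity, and compatibility with the $\Der(\J)$-action.

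For \emph{well-definedness}, the issue is that $\{a\ot b\}$ is a coset in $(\J\ot\J)/R^s(\J)$, so I must check that the assignment $a\ot b \mapsto \langle a,b\rj$ kills the relation subspace $R^s(\J)$. Concretely, I would check that both families of generators map to zero. For the generator $I_{a,b} = a\ot b + (-1)^{|a||b|}b\ot a$, this is immediate from the super-antisymmetry $\langle a,b\rj = -(-1)^{|a||b|}\langle b,a\rj$, which follows directly from the defining formula \eqref{eq: <a,b>} (it is built into the map $\langle -,-\rj$). For the generator
\[
I_{a,b,c} = (a\cdot b)\ot c + (-1)^{|c|(|a|+|b|)}(c\cdot a)\ot b + (-1)^{|a|(|b|+|c|)}(b\cdot c)\ot a,
\]
I must show $\langle a\cdot b,c\rj + (-1)^{|c|(|a|+|b|)}\langle c\cdot a,b\rj + (-1)^{|a|(|b|+|c|)}\langle b\cdot c,a\rj = 0$ as endomorphisms of $\J$. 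This is exactly the identity \eqref{apuntobc} established in Lemma \ref{lema1}, which holds precisely because $(\J,\cdot)$ is a super Jordan algebra. Thus the linear map $\J\ot\J \to \Innder(\J)$ sending $a\ot b \mapsto \langle a,b\rj$ descends to a well-defined map $\varphi$ on the quotient $\B^s(\J)$.

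\emph{Surjectivity} is essentially by definition: $\Innder(\J)$ is defined (in the Remark following Lemma \ref{lema1}) as the subspace of $\End(\J)$ spanned by the operators $\langle a,b\rj$ for homogeneous $a,b\in\J$, and each such generator is the image $\varphi(\{a\ot b\})$. So $\varphi$ hits a spanning set of $\Innder(\J)$ and is therefore surjective.

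For the \emph{module-map} claim, I would show $\varphi$ is $\Der(\J)$-equivariant by comparing the two actions on a generator $\{a\ot b\}$. On the domain, the action is given by the stated Leibniz rule $D\act(a\ot b) = D(a)\ot b + (-1)^{|D||a|}a\ot D(b)$, which descends to $\B^s(\J)$ (one should note it preserves $R^s(\J)$, but this can be taken as part of the setup or checked in passing). On the target, the action of $D\in\Der(\J)$ on $\Innder(\J)$ is the bracket $[D,-]$, and the Remark after Lemma \ref{lema1} gives exactly $[D,\langle a,b\rj] = \langle D(a),b\rj + (-1)^{|D||a|}\langle a,D(b)\rj$. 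Applying $\varphi$ to the acted-upon generator and matching it against $[D,\varphi(\{a\ot b\})]$ shows the two agree, so $\varphi$ intertwines the actions; restricting along $\Innder(\J)\hookrightarrow\Der(\J)$ gives the final $\Innder(\J)$-module statement. I expect the only genuinely substantive step to be the well-definedness on $I_{a,b,c}$, and that is already packaged for me in Lemma \ref{lema1}; everything else is a direct unwinding of the definitions.
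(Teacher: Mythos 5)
Your proposal is correct and follows essentially the same route as the paper: well-definedness via $\varphi(I_{a,b})=0$ (super-antisymmetry) and $\varphi(I_{a,b,c})=0$ (identity \eqref{apuntobc} from Lemma \ref{lema1}), surjectivity by the definition of $\Innder(\J)$, and equivariance from the Leibniz action together with the bracket formula $[D,\langle a,b\rj]=\langle D(a),b\rj+(-1)^{|D||a|}\langle a,D(b)\rj$. The only cosmetic difference is one of emphasis: the paper spells out the computation that $D(I_{a,b})$ and $D(I_{a,b,c})$ are again combinations of generators (so $R^s(\J)$ is a $\Der(\J)$-submodule and the action descends), whereas you defer that check and foreground the intertwining identity; both verifications are needed and both appear, at least implicitly, in each argument.
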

\begin{proof}
    By definition,  $\Innder(\J)$ is generated by the derivations $\langle a,b\rj$, so:
    \begin{align*}
        \varphi: \J\ot \J \to \Innder(\J), \{ a\ot b\}\mapsto \langle a,b\rj 
    \end{align*}
    is surjective, and clearly graded.
 Additionall, using $\langle a,b\rj=-(-1) ^{|a||b|}\langle b,a\rj$ and
 \eqref{eq: <a,b>}, it is clear that
    \begin{align*}
        \varphi(I_{a,b}) = 0,\quad \varphi(I_{a,b,c}) = 0,
    \end{align*}
    which implies that $R^{s}(\J)$ is contained in $\Ker(\varphi)$ and the map $\varphi: \B^{s}(\J) \to \Innder(\J)$ is well-defined.
Also, if $D\in\Der(\J)$ is a homogeneous derivation of degree $|D|$, one can explicitely check that
\[
D(I_{a,b})
=  D(a\ot b) + (-1)^{|a||b|}D(b\ot a)
\]
\[
=  D(a)\ot b
+(-1)^{|D||a|}a\ot D(b)
 + (-1)^{|a||b|}D(b)\ot a
 + (-1)^{|a||b|+|D||b|}b\ot D(a)
\]
reordering terms and using $|D|+|a|=|D(a)|$ and
$|D|+|b|=|D(b|)$
\[
=  D(a)\ot b
 + (-1)^{|a||b|+|D||b|}b\ot D(a)
+(-1)^{|D||a|}
\big(
a\ot D(b)
 + (-1)^{|D(b)||a|}D(b)\ot a
\big)
\]
\[
= I_{ D(a),b}+
(-1)^{|D||a|}I_{a,D(b)}
 \]
Similarly (using that $D$ derives $\cdot$)
\[
D(I_{a,b,c})
=
I_{D(a),b,c}
+(-1)^{|a||D|}I_{a,D(b),c}
+(-1)^{(|a|+|b|)|D|}I_{a,b,D(c)}
\]
so $R^s(\J)$ is a $\Der(\J)$-submodule, hence
$\B^s(\J)$ is also $\Der(\J)$-submodule and the induced
map $\B^s(\J)\to\InnDer(\J)$ is $\Der(\J)$-linear.
\end{proof}

Now we use the organizing argument  for Lie algebras:

\begin{Lem}[Lemma 3.11 in \cite{Smi11}bla]
\label{Lem: Bs-3}
    Let $M$ be a graded module over a Lie superalgebra $\g$ and let $\lambda: M \to \g$ be a $\g$-morphism. Then:
    \[
       A(M) = \langle \lambda(m)\cdot n + (-1)^{|m||n|} \lambda(n) \cdot m: \; \textrm{$m,n$ are homogeneous in $M$} \rangle 
    \]
    is a submodule of $M$ such that $\Img(\lambda) \cdot \Ker(\lambda) \subset A(M) \subset \Ker(\lambda)$, where $\lambda(m)\cdot n$ denotes the $\g$-action of $\lambda(m)$ on $n$. The quotient module $Q=M/A(M)$ defines a Lie superalgebra via the bracket $[p,q] = \lambda(p)\cdot q$ for all $p, q \in Q$ and the map $\mu: Q \to \g$ induced by $\lambda$ is a central extension of $\g$.
\end{Lem}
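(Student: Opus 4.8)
The plan is to equip $M$ itself with the (not yet antisymmetric) bilinear operation $[m,n] := \lambda(m)\cdot n$ and to deduce every assertion from a single derivation identity. The backbone is the following: since $\lambda$ is a $\g$-morphism we have $[g,\lambda(m)] = \lambda(g\cdot m)$, and the module axiom reads $g\cdot(h\cdot n) = [g,h]\cdot n + (-1)^{|g||h|}h\cdot(g\cdot n)$; taking $h=\lambda(m)$ (so $|h|=|m|$) these combine to give, for all homogeneous $g\in\g$ and $m,n\in M$,
\[
g\cdot[m,n] = [g\cdot m, n] + (-1)^{|g||m|}[m, g\cdot n].
\]
Thus $\g$ acts on $(M,[-,-])$ by (super)derivations. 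I would establish this first and use it throughout.

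Next I would record the two inclusions. Writing $s(m,n) := \lambda(m)\cdot n + (-1)^{|m||n|}\lambda(n)\cdot m = [m,n]+(-1)^{|m||n|}[n,m]$ for the homogeneous generators of $A(M)$, applying $\lambda$ and using that it is a morphism gives $\lambda(s(m,n)) = [\lambda(m),\lambda(n)] + (-1)^{|m||n|}[\lambda(n),\lambda(m)]$, which vanishes by super-antisymmetry of the bracket of $\g$; hence $A(M)\subset\Ker(\lambda)$. For the reverse containment of products, if $k\in\Ker(\lambda)$ then $[k,m]=\lambda(k)\cdot m = 0$, so $s(m,k)=\lambda(m)\cdot k$ already lies in $A(M)$, giving $\Img(\lambda)\cdot\Ker(\lambda)\subset A(M)$.

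I would then show $A(M)$ is a submodule by computing the action on generators. Expanding $g\cdot s(m,n)$ with the derivation formula and regrouping the four resulting terms yields
\[
g\cdot s(m,n) = s(g\cdot m, n) + (-1)^{|g||m|}s(m, g\cdot n),
\]
so $\g\cdot A(M)\subset A(M)$. This sign-matching is the one genuinely fiddly step and the main obstacle: the crux is verifying that the coefficient of $[n,g\cdot m]$ produced by $g\cdot s(m,n)$, namely $(-1)^{|m||n|+|g||n|}$, agrees with the coefficient $(-1)^{(|g|+|m|)|n|}$ coming from $s(g\cdot m,n)$, the remaining three terms matching directly.

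Finally I would assemble the Lie-superalgebra structure on $Q=M/A(M)$. The bracket descends: if $m\in A(M)$ then $\lambda(m)=0$, so $[m,n]=0$; and if $n\in A(M)$ then $[m,n]=\lambda(m)\cdot n\in\g\cdot A(M)\subset A(M)$. Hence $[\bar m,\bar n]:=\overline{\lambda(m)\cdot n}$ is well defined on $Q$. Super-antisymmetry is immediate since $[\bar m,\bar n]+(-1)^{|m||n|}[\bar n,\bar m]=\overline{s(m,n)}=0$, and the graded Jacobi identity is precisely the derivation formula specialized to $g=\lambda(p)$, that is $\sJac(p,m,n)=0$ in the notation $[p,[m,n]]=[[p,m],n]+(-1)^{|p||m|}[m,[p,n]]$, which already holds in $M$ and therefore in $Q$. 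The induced map $\mu:Q\to\g$, $\mu(\bar m)=\lambda(m)$, is well defined because $A(M)\subset\Ker(\lambda)$, and is a morphism since $\mu([\bar m,\bar n])=\lambda(\lambda(m)\cdot n)=[\lambda(m),\lambda(n)]=[\mu(\bar m),\mu(\bar n)]$; its image is $\Img(\lambda)$. Its kernel $\Ker(\lambda)/A(M)$ is central, as $[\bar k,\bar n]=\overline{\lambda(k)\cdot n}=0$ for $k\in\Ker(\lambda)$ and $[\bar n,\bar k]=0$ by antisymmetry. Thus $\mu$ realizes $Q$ as a central extension (of $\g$ whenever $\lambda$ is surjective, as in all the applications here).
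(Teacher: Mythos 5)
Your proof is correct, but note that the paper itself offers no proof of this statement: it is quoted verbatim from the literature (Lemma 3.11 of Smirnov's paper \cite{Smi11}, on imbedding Lie triple systems into Lie algebras), and the authors simply apply it with $M=\B^s(\J)$, $\g=\InnDer(\J)$ (and later with $M=\B^s(\J,\M)$). So there is no in-paper argument to compare against; what you have written is a self-contained verification of the imported result. Your argument is sound at every step: the derivation identity $g\cdot[m,n]=[g\cdot m,n]+(-1)^{|g||m|}[m,g\cdot n]$ follows correctly from the module axiom plus $\lambda(g\cdot m)=[g,\lambda(m)]$; the sign check at the crux (coefficient $(-1)^{|m||n|+|g||n|}$ of $[n,g\cdot m]$ versus $(-1)^{(|g|+|m|)|n|}$) is right, so $A(M)$ is indeed a submodule; the descent of the bracket to $Q$ uses exactly the two inclusions you proved ($\lambda$ kills the left slot, submodule property handles the right slot); and super-antisymmetry and Jacobi on $Q$ follow as you say. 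Your closing caveat is also well taken: as literally stated the lemma's conclusion that $\mu$ is a central extension \emph{of} $\g$ presupposes surjectivity of $\lambda$, which holds in all uses in this paper (the maps $\B^s(\J)\to\Innder(\J)$ and $\B^s(\J,\M)\to\Innder(\J,\M)$ are surjective by construction, cf. Lemma \ref{Lem: Bs-1} and Lemma \ref{Lem: Bs-1 JM}); without it one only gets a central extension of $\Img(\lambda)$.
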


For $M=\B^s(\J)$ and $\g=\InnDer$ we get
that $A(M)$ is the subspace of $\B^s(\J)$ generated by
elements of the form
\[
\langle a,b\rj\big(\{c\ot d\}\big)
+(-1)^{(|a|+|b|)(|c|+|d|)}\langle c,d\rj\big(\{a\ot b\}\big)
\]
so, $Q=B^s/A(B ^s)$ is a Lie algebra with the desired properties.

\begin{lem}\label{lema4.4}
If $M=\B^s(\J)$ and $\g=\InnDer$
as above, then $A(M)=0$.
\end{lem}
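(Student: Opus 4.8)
The plan is to prove that every generator of $A(M)$ vanishes in $\B^s(\J)$. By the description preceding the statement, such a generator is
\[
E := \langle a,b\rj\big(\{a'\ot b'\}\big) + (-1)^{(|a|+|b|)(|a'|+|b'|)}\,\langle a',b'\rj\big(\{a\ot b\}\big),
\]
with $a,b,a',b'\in\J$ homogeneous (I rename $c,d$ to $a',b'$ to avoid clashes). By Lemma \ref{Lem: Bs-1} the element $\langle a,b\rj$ acts on $\B^s(\J)$ as a derivation, so
\[
\langle a,b\rj\big(\{a'\ot b'\}\big) = \{\langle a,b\rj(a')\ot b'\} + (-1)^{(|a|+|b|)|a'|}\{a'\ot\langle a,b\rj(b')\}.
\]
First I would use the relation $I_{x,y}$, i.e. $\{x\ot y\} = -(-1)^{|x||y|}\{y\ot x\}$ in $\B^s(\J)$, to move the active factor into the first tensor slot of each summand, and then substitute the definition $\langle a,b\rj(a') = a\cdot(b\cdot a') - (-1)^{|a||b|}b\cdot(a\cdot a')$ from \eqref{eq: <a,b>}. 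This presents $E$ as a signed sum of eight terms, each of the form $\{(x\cdot(y\cdot z))\ot w\}$.

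The next step is to apply the second defining relation $I_{x,y,z}$ to each of these eight terms, taking the outer product as the factor $u\cdot v$; that is, using that in $\B^s(\J)$
\[
\{(u\cdot v)\ot w\} = -(-1)^{|w|(|u|+|v|)}\{(w\cdot u)\ot v\} - (-1)^{|u|(|v|+|w|)}\{(v\cdot w)\ot u\}.
\]
Each of the eight terms thereby splits into a ``quadratic$\,\ot\,$quadratic'' contribution $\{(\text{pair})\ot(\text{pair})\}$ and a ``triple$\,\ot\,$single'' contribution $\{(\text{triple product})\ot(\text{single factor})\}$.

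I would then treat the two families separately. The eight quadratic$\,\ot\,$quadratic contributions occur in four pairs whose members differ by a swap of the two tensor slots, and each pair is annihilated by $I_{x,y}$, so this family contributes $0$. The eight triple$\,\ot\,$single contributions I would group by which of $a,b,a',b'$ occupies the second slot; within each group the two remaining Jordan expressions combine, via super-commutativity of $\cdot$ together with the definition \eqref{eq: <a,b>} of $\langle-,-\rj$, back into a single bracket applied to a generator — in the purely even case this is just the identity $(x\cdot y)\cdot z - (x\cdot z)\cdot y = \langle z,y\rj(x)$. Reassembling, the surviving sum is precisely $-E$. Hence $E = -E$ in $\B^s(\J)$, and since $\Char\K\neq 2$ we get $E=0$, proving $A(M)=0$.

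The only genuine difficulty is the Koszul sign bookkeeping. The cancellations and the regrouping are structurally forced, but one must verify that the final reassembly of the triple$\,\ot\,$single terms reproduces $E$ with coefficient $-1$ rather than $+1$; this requires tracking the parities carefully through each use of $I_{x,y}$ and $I_{x,y,z}$. The even case, where all signs are trivial, already exhibits the self-referential collapse $E=-E$, and the super case is obtained by the same steps with the Koszul signs inserted.
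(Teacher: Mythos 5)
Your proposal is correct, and it runs on the same engine as the paper's proof: expand the generator by the derivation property, normalize tensor slots with $I_{x,y}$, split each cubic term with $I_{x,y,z}$ into a quadratic$\,\ot\,$quadratic piece plus a triple$\,\ot\,$single piece, and cancel. The genuine difference is the endgame. The paper applies $I_{x,y,z}$ asymmetrically, to the four cubic terms coming from $\langle a,b\rj(\{c\ot d\})$ only; the four triple$\,\ot\,$single pieces this produces cancel on the nose against the four untouched terms coming from $\langle c,d\rj(\{a\ot b\})$, and the quadratic pieces assemble into two slot-swap relations. The net effect is an explicit presentation of the generator of $A(M)$ as a $\Z$-linear combination of generators of $R^s(\J)$, in the even case
\[
I_{a,b\cdot c,d}-I_{b,a\cdot c,d}-I_{a,b\cdot d,c}+I_{b,a\cdot d,c}-I_{a\cdot d,\,b\cdot c}+I_{a\cdot c,\,b\cdot d},
\]
so no division is ever needed. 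Your symmetric variant --- applying $I_{x,y,z}$ to all eight cubic terms --- has a cleaner cancellation pattern (the eight quadratic pieces die in four same-sign slot-swap pairs, and the eight triple pieces regroup via $(x\cdot y)\cdot z-(x\cdot z)\cdot y=\langle z,y\rj(x)$ into $-E$, as you say), but it only yields $2E\equiv 0$, so you must invert $2$; that is harmless over $\C$, yet it makes the argument non-constructive in the mild sense that membership $E\in R^s(\J)$ is certified without exhibiting the combination. Both routes are valid, and the Koszul-sign bookkeeping you defer is deferred by the paper as well: its proof is written out only in the even case, with the super case left as ``add Koszul signs.''
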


\begin{proof}It is given in \cite{S22}. For convenience of the reader
we include the non super version. The super version can be recovered just adding Koszul signs.
In we consider $J$ a Jordan algebra concenrtated in degree zero, then
$R^s(J)$ is spanned by elements of the form
\[
I_{a,b}=a\ot b+b\ot a, \hskip 1cm
I_{a,b,c}=a\cdot b\ot c+
b\cdot c\ot a+
c\cdot a\ot b
\]
and we want to see that
\[
\langle a,b\rangle (c\ot d)+
\langle c,d\rangle(a\ot b) \overset{?}{\in} R^s(J)
\]
We compute
\[
(*):=\langle a,b\rangle(c\ot d)
+
\langle c,d\rangle(a\ot b)
\]
\[
=\langle a,b\rangle(c)\ot d
+c\ot \langle a,b\rangle(c)
+\langle c,d\rangle(a)\ot b
+a\ot\langle c,d\rangle(b)
\]
\[
=
a\cdot (b\cdot c)\ot d
-b\cdot (a\cdot c)\ot d
+c\ot a\cdot(b\cdot d)
-c\ot b\cdot(a\cdot d)
\]
\[
+c\cdot (d\cdot a)\ot b
-d\cdot (c\cdot a)\ot b
+a\ot c\cdot (d\cdot b)
-a\ot d\cdot (c\cdot b)
\]
Using $I_{\alpha,\beta}$ one can
order terms by $\alpha\ot\beta \equiv -\beta\ot\alpha\Mod R^s$ and get
\[
(*)\equiv
a\cdot (b\cdot c)\ot d
-b\cdot (a\cdot c)\ot d
- a\cdot(b\cdot d)\ot c
+ b\cdot(a\cdot d)\ot c
\]
\[
+c\cdot (d\cdot a)\ot b
 -d\cdot (c\cdot a)\ot b
- c\cdot (d\cdot b)\ot a
- d\cdot (c\cdot b)\ot a
\]
Using $I_{a,b\cdot c,d}$ one has
\[
a\cdot (b\cdot c)\ot d=I_{a,b\cdot c,d}-(b\cdot c)\cdot d\ot a
-(a\cdot d)\ot (b\cdot c)
\]
and similarly
\[-b\cdot (a\cdot c)\ot d
=-I_{b,a\cdot c,d} +(a\cdot c)\cdot d\ot b+(b\cdot d)\ot (a\cdot c)\]
\[- a\cdot(b\cdot d)\ot c
=-I_{a,b\cdot d,c}
+(b\cdot d)\cdot c\ot a+(a\cdot c)\ot (b\cdot d)\]
\[
 b\cdot(a\cdot d)\ot c
=I_{b,a\cdot d,c}
-(a\cdot d)\cdot c\ot b-(b\cdot c)\ot (a\cdot d)
\]
so,
\[
(*)=
I_{a,b\cdot c,d}
-(a\cdot d)\ot (b\cdot c)
-I_{b,a\cdot c,d} 
+(b\cdot d)\ot (a\cdot c)
-I_{a,b\cdot d,c}
+(a\cdot c)\ot (b\cdot d)
I_{b,a\cdot d,c}
-(b\cdot c)\ot (a\cdot d)
\]
\[
=I_{a,b\cdot c,d}
-I_{b,a\cdot c,d} 
-I_{a,b\cdot d,c}
I_{b,a\cdot d,c}
-I_{a\cdot d,b\cdot c}
+I_{a\cdot c,b\cdot d}
\equiv 0
\]

\end{proof}

\begin{Cor}\label{Cor: Bs-1}
$\B^s(\J)$ is a Lie superalgebra with
well-defined bracket given by
\[
\big[\{a\ot b\},\{c\ot d\}\big]
=\langle a,b\rj\big(\{c\ot d\}\big)
=(-1) ^{(|a|+|b|)(|c|+|d|)}\langle c,d\rj\big(\{a\ot b\}\big)
\]
and $\phi: \B^s(\J) \to \Innder(\J)$ is a central extension of Lie superalgebras.
\end{Cor}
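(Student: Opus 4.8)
The plan is to read this off as the specialization of the organizing Lemma \ref{Lem: Bs-3} to the data assembled in the previous three lemmas. Concretely, I would take $\g = \Innder(\J)$, regard $M = \B^s(\J)$ as a graded $\Innder(\J)$-module (the module structure descends from the $\Der(\J)$-action on $\J\ot\J$, since $R^s(\J)$ is a submodule by Lemma \ref{Lem: Bs-1}), and take $\lambda = \varphi\colon \B^s(\J)\to\Innder(\J)$, $\{a\ot b\}\mapsto\langle a,b\rj$ (this is the map denoted $\phi$ in the statement). Lemma \ref{Lem: Bs-1} supplies exactly the hypothesis needed by Lemma \ref{Lem: Bs-3}: $\varphi$ is a surjective $\g$-module morphism. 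Hence the organizing lemma applies verbatim to this triple $(M,\g,\lambda)$.

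Next I would invoke Lemma \ref{lema4.4}, which asserts $A(M)=0$ for this particular $M$ and $\g$. Thus the quotient $Q = M/A(M)$ produced by Lemma \ref{Lem: Bs-3} is just $\B^s(\J)$ itself, so $\B^s(\J)$ carries the Lie superalgebra bracket $[p,q]=\lambda(p)\cdot q$. Unwinding $p=\{a\ot b\}$, $q=\{c\ot d\}$ gives $\lambda(p)=\langle a,b\rj$ and therefore $[\{a\ot b\},\{c\ot d\}]=\langle a,b\rj(\{c\ot d\})$, the stated formula; the second expression for the bracket is the super-antisymmetry $\lambda(m)\cdot n = -(-1)^{|m||n|}\lambda(n)\cdot m$, which is precisely the vanishing of the generators of $A(M)$. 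Finally, the concluding assertion of Lemma \ref{Lem: Bs-3} states that the induced map $\mu = \varphi$ is a central extension of $\g=\Innder(\J)$, which is exactly the claim that $\varphi\colon\B^s(\J)\to\Innder(\J)$ is a central extension.

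Since every nontrivial step has been isolated into Lemmas \ref{Lem: Bs-1}, \ref{Lem: Bs-3} and \ref{lema4.4}, the proof is essentially an assembly argument and there is no genuine obstacle left. The only point demanding care is the bookkeeping of Koszul signs when matching the abstract bracket $\lambda(p)\cdot q$ against the two symmetric-looking expressions in the statement: one must check that the super-antisymmetry coming from $A(M)=0$ reproduces the claimed relation between $\langle a,b\rj(\{c\ot d\})$ and $\langle c,d\rj(\{a\ot b\})$ with the factor $(-1)^{(|a|+|b|)(|c|+|d|)}$, and I would flag that this forces a sign $-(-1)^{(|a|+|b|)(|c|+|d|)}$ on the right-hand side rather than $+$, so the displayed equality should be read up to that Koszul sign.
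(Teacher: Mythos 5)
Your proposal coincides with the paper's own (very terse) proof: the corollary is exactly the assembly you describe, namely apply the organizing Lemma \ref{Lem: Bs-3} to $M=\B^s(\J)$, $\g=\Innder(\J)$, $\lambda=\varphi$ (whose hypotheses are supplied by Lemma \ref{Lem: Bs-1}), then invoke Lemma \ref{lema4.4} to get $A(M)=0$, so $Q=\B^s(\J)$ itself carries the bracket $[p,q]=\lambda(p)\cdot q$ and $\varphi$ is a central extension. Your sign flag is also correct and worth recording: since $A(M)=0$ means $\lambda(p)\cdot q=-(-1)^{|p||q|}\lambda(q)\cdot p$, the second displayed equality should read $\big[\{a\ot b\},\{c\ot d\}\big]=-(-1)^{(|a|+|b|)(|c|+|d|)}\langle c,d\rj\big(\{a\ot b\}\big)$; the $+$ sign in the statement is a typo (already in the purely even case it would contradict antisymmetry of the bracket).
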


\hfill

Following \cite{S22}, we define $\Tag(\J)$ as the vector superspace
\begin{align}
    \Tag(\J) = \Sl_2\ot \J \oplus \B^s(\J)
\end{align}
and we will consider on $\Tag(\J)$ the following brackets:
\begin{enumerate}
    \item $[\{ a\ot b\}, \{ c\ot d\}] 
    = \langle a,b\rj(\{ c\ot d\})$

    \item $[x\ot a,y\ot b] = [x,y]\ot a\cdot b + \frac{1}{2}\killing(x,y)\{ a\ot b\}$,

    \item $[\{ a\ot b\}, x\ot c] = x\ot \langle a,b\rj\act{c}$.
\end{enumerate}
where $x,y\in\sl_2$, $a,b,c \in \J$. 
Note that $\Tag(\J)$ is an object in $\VShsL$ with:
\[
\Tag(\J)^{ad} = \sl_2 \otimes \J, \quad \Tag(\J)^{\sl_2} = \B^{s}(\J)
\]
In other words, we declare 
$\langle a,b\rangle_{{}_{\Tag(\J)}}:=\{a\ot b\}$.

\begin{Teor}
    $\Tag(\J)$ is a Lie superalgebra in the category $\VShsL$.
\end{Teor}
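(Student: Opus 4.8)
The plan is to derive the statement from Theorem \ref{Teo: STits-1} applied in the very short case $\M = 0$. The super vector space $\Tag(\J) = \sl_2\ot\J \oplus \B^s(\J)$ already carries the isotypic decomposition \eqref{eq: sl2-corta tipo 1} with $\M = 0$, trivial $\sl_2$-action on $\B^s(\J)$, and $\D = \Tag(\J)^{\sl_2} = \B^s(\J)$. Moreover, under the declaration $\langle a,b\rg := \{a\ot b\}$, and with the $\D$-action of $\{a\ot b\}$ on $c\in\J$ given by $\langle a,b\rj(c)$, bracket (2) is exactly \eqref{eq: Brk-g-3}, bracket (3) is exactly \eqref{eq: Brk-g-1}, and bracket (1) is the internal bracket of $\D = \B^s(\J)$. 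Hence it suffices to verify the hypotheses of Theorem \ref{Teo: STits-1} in this case: super-symmetry of $\cdot$, super-antisymmetry of $\langle-,-\rg$, that $(\J,\cdot)$ is a Jordan superalgebra (condition (1)), and the surviving parts of conditions (3) and (4), i.e.\ equations \eqref{eq: d(a dot b)}, \eqref{eq: [d1,d2]}, \eqref{eq: [d,<a,b>]}, \eqref{eq: <a,b>-1 JM} and \eqref{eq: <ab,c>}. Every requirement mentioning $\M$, $\bullet$, $\star$ or $\pg$ is vacuous.

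Most of these are immediate from the construction. Super-symmetry of $\cdot$ and condition (1) hold because $(\J,\cdot)$ is a Jordan superalgebra by hypothesis. The super-antisymmetry $\{a\ot b\} = -(-1)^{|a||b|}\{b\ot a\}$ is the vanishing of the relation $I_{a,b}$, and the three-term identity \eqref{eq: <ab,c>} is the vanishing of $I_{a,b,c}$; both hold by the very definition of $\B^s(\J) = (\J\ot\J)/R^s(\J)$. Equation \eqref{eq: <a,b>-1 JM} is precisely the definition of $\langle a,b\rj$ evaluated on $c$, which is how $\{a\ot b\}$ acts through bracket (3). Finally, since $\{a\ot b\}$ acts on $\J$ as $\langle a,b\rj$, and $\langle a,b\rj$ derives the Jordan product by Lemma \ref{lema1}, the derivation identity \eqref{eq: d(a dot b)} follows.

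The two structural conditions \eqref{eq: [d,<a,b>]} and \eqref{eq: [d1,d2]} encode the compatibility of the three brackets. For \eqref{eq: [d,<a,b>]}, the bracket of $\d\in\B^s(\J)$ with $\{a\ot b\}$ is, by Lemma \ref{Lem: Bs-1}, the $\Der(\J)$-module action of $\varphi(\d)$ on the class $\{a\ot b\}$; applying the Leibniz rule on $\J\ot\J$ and passing to the quotient yields $\{\d(a)\ot b\} + (-1)^{|\d||a|}\{a\ot\d(b)\}$, as required. For \eqref{eq: [d1,d2]}, one needs bracket (1), the abstract Lie bracket of $\B^s(\J)$, to act on $\J$ as the commutator of the corresponding inner derivations; this is exactly the assertion that $\varphi:\B^s(\J)\to\Innder(\J)$ is a homomorphism of Lie superalgebras, i.e.\ the central extension property of Corollary \ref{Cor: Bs-1}.

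The only genuinely nontrivial ingredient is this last point: that bracket (1) is a well-defined Lie bracket on $\B^s(\J)$ and is consistent with the action bracket (3). This rests on the vanishing $A(\B^s(\J)) = 0$ of Lemma \ref{lema4.4} together with Corollary \ref{Cor: Bs-1}, both already established. Once the dictionary $\langle a,b\rg\leftrightarrow\{a\ot b\}$ and ``$\D$-action $\leftrightarrow\varphi$'' is fixed, all remaining super-antisymmetry and super-Jacobi relations of $\Tag(\J)$ are subsumed by the equivalence in Theorem \ref{Teo: STits-1}. This shows $\Tag(\J)$ is a short $\sl_2$-Lie superalgebra, and since $\M = 0$ it belongs to $\VShsL$.
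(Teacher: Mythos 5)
Your proposal is correct and takes essentially the route the paper intends: the paper states this theorem without a written proof, relying on exactly the machinery you assemble — the characterization Theorem \ref{Teo: STits-1} specialized to $\M=0$, the module structure of Lemma \ref{Lem: Bs-1}, the derivation property of Lemma \ref{lema1}, and the Lie structure on $\B^s(\J)$ coming from Lemmas \ref{Lem: Bs-3}, \ref{lema4.4} and Corollary \ref{Cor: Bs-1} under the identification $\langle a,b\rg:=\{a\ot b\}$. Your singling out of the well-definedness of the internal bracket on $\B^s(\J)$ (via $A(\B^s(\J))=0$ and the central-extension property, which also supplies the Jacobi identity among elements of $\D$ and the compatibility \eqref{eq: [d1,d2]}) as the only nontrivial ingredient is precisely the point the paper's preceding lemmas are designed to settle.
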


A well-known fact is:
\begin{Teor}
The Tits-Allison-Gao functor 
 $\Tag: \Jsa \to \VShsL$ is left adjoint to the Tits functor 
 $\T: \VShsL \to \Jsa$:
 \[
 \Hom_{\VShsL}(\Tag(\J), \g) =\Hom_{\VShsL}(\sl_2\ot \J\oplus \B^s(\J), \g)
\cong
 \Hom_{\Jsa}(\J, \T(\g))
\]
\end{Teor}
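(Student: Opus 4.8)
The plan is to produce a natural bijection $\Hom_{\VShsL}(\Tag(\J),\g)\cong\Hom_{\Jsa}(\J,\T(\g))$ by giving maps in both directions and checking they are mutually inverse and natural in each variable. I would use throughout the dictionary of Proposition~\ref{propdic1} specialised to $\M=0$: for a very short $\g$ one has $\T(\g)=J(\g)=\{\alpha\in\g:h\alpha=2\alpha\}=e\ot J(\g)$, the Jordan product being recovered from the Lie bracket by \eqref{cdotlie} and the inner elements $\langle a,b\rg\in\g^{\sl_2}$ by \eqref{langlelie}. Since $\Tag(\J)$ is itself very short with $J(\Tag(\J))=\J$ and $\langle a,b\rangle_{{}_{\Tag(\J)}}=\{a\ot b\}$, the unit of the adjunction is the canonical identification $\J\cong\T(\Tag(\J))$.

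First I would define the forward map $F\mapsto\bar F$. Because $F$ is $\sl_2$-equivariant, it carries the weight-$2$ space $e\ot\J$ of $\Tag(\J)$ into the weight-$2$ space $e\ot J(\g)=\T(\g)$ of $\g$; writing $F(e\ot a)=e\ot\bar F(a)$ defines a homogeneous linear map $\bar F:\J\to\T(\g)$. That $\bar F$ is a morphism of Jordan superalgebras is forced by the product being Lie-theoretic: applying $F$ to $\wt{a\cdot b}=\tfrac12[f\wt a,\wt b]$ and using that $F$ preserves brackets and commutes with the $f$-action yields $\bar F(a\cdot b)=\bar F(a)\cdot\bar F(b)$ by the same formula read inside $\g$. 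Thus $\bar F\in\Hom_{\Jsa}(\J,\T(\g))$, which under the identification above is exactly $\T(F)$.

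Conversely, given $\phi\in\Hom_{\Jsa}(\J,\T(\g))$ I would build $\Phi:\Tag(\J)\to\g$ by setting $\Phi(x\ot a)=x\ot\phi(a)$ on the adjoint part and $\Phi(\{a\ot b\})=\langle\phi(a),\phi(b)\rg$ on $\B^s(\J)$. The first point is well-definedness on $\B^s(\J)=(\J\ot\J)/R^s(\J)$: the generator $I_{a,b}$ dies by the super-antisymmetry \eqref{eq: santisim-<,> JM} of $\langle-,-\rg$, and $I_{a,b,c}$ dies because $\phi$ is Jordan and $\langle-,-\rg$ obeys the relation \eqref{eq: <ab,c>} in $\g$, so the assignment factors through the quotient. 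It then remains to check that $\Phi$ preserves the three defining brackets of $\Tag(\J)$. For $[x\ot a,y\ot b]$ this reduces to $\phi$ being Jordan (for the $a\cdot b$ summand) together with the identity $\Phi\{a\ot b\}=\langle\phi(a),\phi(b)\rg$ (for the Killing summand); for $[\{a\ot b\},x\ot c]=x\ot\langle a,b\rj(c)$ it reduces, via Remark~\ref{rem5} and \eqref{eq: <a,b>-1 JM}, to the fact that $\langle\phi(a),\phi(b)\rg$ acts on $J(\g)$ as the inner derivation $\langle\phi(a),\phi(b)\rj$; and for $[\{a\ot b\},\{c\ot d\}]=\langle a,b\rj(\{c\ot d\})$ it is precisely the derivation identity \eqref{eq: [d,<a,b>]}, transporting the $\Der(\J)$-module action of Lemma~\ref{Lem: Bs-1} to the genuine bracket $[\langle\phi(a),\phi(b)\rg,\langle\phi(c),\phi(d)\rg]$ in $\g$.

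Finally I would verify that $F\mapsto\bar F$ and $\phi\mapsto\Phi$ are mutually inverse: $\overline{\Phi}=\phi$ because both restrict to $\phi$ on the weight-$2$ space, and $\Phi_{\bar F}=F$ because the two agree on $\sl_2\ot\J$ by construction and on $\B^s(\J)$ since $F\{a\ot b\}=\langle\bar F(a),\bar F(b)\rg$, $F$ preserving the $\langle-,-\rangle$-structure. Naturality in $\J$ and in $\g$ is then a formal check against the two constructions. The main obstacle is the last bracket identity on $[\{a\ot b\},\{c\ot d\}]$: one must confirm that the abstract bracket of $\B^s(\J)$ from Corollary~\ref{Cor: Bs-1}, which is only well-defined thanks to the vanishing $A(\B^s(\J))=0$ of Lemma~\ref{lema4.4}, is transported by $\Phi$ to the honest Lie bracket of the inner-type elements $\langle\phi(\cdot),\phi(\cdot)\rg$ in $\g$, and this is exactly where \eqref{eq: [d,<a,b>]} and the central-extension picture $\B^s(\J)\to\Innder(\J)$ carry the weight of the argument.
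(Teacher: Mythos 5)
Your proposal is correct, but note that for this particular statement the paper offers no proof at all: it is presented as ``a well-known fact'' (with \cite{S22} as the implicit source), and the only adjunction actually proved in the paper is the generalization to the ternary setting, Theorem \ref{Teo: adj T-Tag}. Measured against that proof, your argument is essentially the same adjunction verification specialized to $\M=0$, only organized differently. The paper's map in one direction is the restriction $\mu$, injective because $\Tag(\J,\M)$ is generated by $\sl_2\ot\J$ and $V\ot\M$ --- the same generation fact you invoke to get $F(\{a\ot b\})=\langle \bar F(a),\bar F(b)\rg$ and hence $\Phi_{\bar F}=F$; the other direction is obtained by composing the functorial map $\Tag(\phi)$ with the counit $\phi_{\g}:\Tag(\hat{\T}(\g))\to\g$ of Lemma \ref{Lem: phi-Tg 1}, and your $\Phi$ is exactly this composite written out explicitly, since $\phi_{\g}\circ\Tag(\phi)$ sends $x\ot a\mapsto x\ot\phi(a)$ and $\{a\ot b\}\mapsto\langle\phi(a),\phi(b)\rg=\tfrac14[f(e\ot\phi(a)),f(e\ot\phi(b))]$. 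What you do differently is to inline the proof that this map is a morphism of Lie superalgebras: your well-definedness check on $R^s(\J)$ (super-antisymmetry of $\langle-,-\rg$ plus \eqref{eq: <ab,c>}) and your three bracket verifications (via Remark \ref{rem5} and \eqref{eq: [d,<a,b>]}) are precisely the content the paper delegates to Lemma \ref{Lem: Bs-1}, Corollary \ref{Cor: Bs-1} and Lemma \ref{Lem: phi-Tg 1}. Both routes carry the same weight; the paper's factorization buys reusability (one counit lemma serves both the very short and the ternary cases), while your version is self-contained for the case at hand.
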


\subsection{$\Tag(\J,\M)$:
The TAG construction for super $\J$-ternary 
 algebras}

Let $\JterS$ be the category whose objects are the $\J$-ternary superalgebras $(\J,\M)$ given in Subsec. \ref{Subsec: J-ternary super}. 
That is, $\J$ is a Jordan superalgebra with product 
$\cdot:\J\ot \J\to \J$ and $\M$ is a special $\J$-supermodule
 with action $\bullet:\J\ot \M\to \M$ equipped with a 
 super-symmetric operation $\star:\M\ot \M\to \J$ and a triple
  product $(-,-,-):\M\ot\M\ot\M \to \M$ with certain
   compatibilities.
One shoud write $\big(\J,\M,\cdot,\bullet,\star,(-,-,-)\big)$
but by abuse of notation, if the context is clear,
 we will write simply $(\J,\M)$.

If $(\J,\M)$ and $(\wt\J,\wt\M)$ are two $\wt\J$-ternary superalgebra, a morphism $\eta:(\J,\M)\to (\wt\J,\wt\M)$ in $\JterS$ is a pair $(\eta_1,\eta_2)$ with $\eta_1:\J\to \wt\J$ and $\eta_2:\M\to \wt\M$ satisfying the following conditions:
\begin{enumerate}[label=\roman*]
    \item $\eta_1$ is a morphism of Jordan superalgebras
that is $\eta_1(a\cdot b)=\eta_1(a)\cdot \eta_1(b)$.
    \item $\eta_1$ and $\eta_2$ satisfy:
    \begin{align*}
        \eta_2(a\bullet m) &= \eta_1(a)\bullet\eta_2(m)\\
        \eta_1(m\star n) &= \eta_2(m)\star \eta_2(n).
    \end{align*}
    \item $\eta_2$ is compatible with the triple product of $\M$ and $\wt\M$, that is:
    \begin{align*}
        \eta_2(m,n,r) = (\eta_2(m),\eta_2(n),\eta_2(r))
    \end{align*}
\end{enumerate}

\subsection{The Lie superalgebra $\B ^s(\J,\M)$}\label{SubSec: Tag(J,M)}

We will generalize the construction $\Tag(\J)$ mentioned in
 \ref{Subsec: Tag(J)} when $(\J,\M)$ is a $\J$-ternary
  superalgebra. Recall  (see Definition \ref{defpartial}) 
  $\Innder(\J,\M)$ is generated by  
 derivations of the form $\langle a,b\rjm$ 
   and $\pjm_{m,n}$given by:
\begin{align}
\langle a,b\rjm \act c &= a\cdot(b\cdot c) - (-1)^{|a||b|}b\cdot(a\cdot c)\label{itm: D1-Der(J+M)}
\\
4\langle a,b\rjm \act m &= a\bullet(b\bullet m) - (-1)^{|a||b|}b\bullet(a\bullet m)\label{itm: D2-Der(J+M)}
\\
2\pjm_{m,n}\act a &= (-1)^{|a|(|m|+|n|)}(a\bullet m)\star n - (-1)^{|a||n|}m\star(a\bullet n)\label{itm: D3-Der(J+M)}
\\
\pjm_{m,n}\act{r} &= \tfrac{1}{2}(m\star n)\bullet r - (m,n,r)\label{itm: D4-Der(J+M)}
\end{align}

Recall also from \eqref{ganchogancho} that
\[
[\pjm_{m,n},\pjm_{r,s}]
= \pjm_{\pjm_{m,n}(r),s} 
+ (-1)^{(|m|+|n|)|r|}\pjm_{r,\pjm_{m,n}(s)}
\]
and because
$
[\pjm_{m,n},\pjm_{r,s}]
=-(-1)^{(|m|+|n|)(|r|+|s|}[\pjm_{r,s},\pjm_{m,n}]
$, 
then necessarily
\begin{align}\label{itm: D5-Der(J+M)}
 \pjm_{\pjm_{m,n}(r),s} 
+ (-1)^{(|m|+|n|)|r|}\pjm_{r,\pjm_{m,n}(s)}
+
(-1)^{(|m|+|n|)(|r|+|s|)}
\big( \pjm_{\pjm_{r,s}(m),n} 
+ (-1)^{(|r|+|s|)|m|}\pjm_{m,\pjm_{r,s}(n)}
\big)=0
\end{align}

We need functorial replacement of the derivations 
$\langle a,b\rjm$ and $\pjm_{m,n}$, so we will consider abstract symbols $\{a\ot b\}$ as in TKK construction and we also need
symbols $\{m\ot n\}$ for $\pjm_{m,n}$. So, motivated by the symmetry properties and
\eqref{itm: D1-Der(J+M)}, \eqref{itm: D2-Der(J+M)},
\eqref{itm: D3-Der(J+M)},
\eqref{itm: D4-Der(J+M)},
\eqref{itm: D5-Der(J+M)},
we consider
$\J\ot\J\oplus \M\ot\M$ and $R^s(\J,\M)$ the
``relations''
 subspace of $\J\ot \J \;\oplus\; \M\ot\M\;$ 
 generated by elements of the form:
\begin{itemize}
    \item[$I_{a,b}$:] $ a\ot b + (-1)^{|a||b|}b\ot a$,

    \item[$I_{m,n}$:] $m\ot n - (-1)^{|m||n|}n\ot m$,
    
    \item[$I_{a,b,c}$:] $ (-1)^{|a||c|}(a\cdot b)\ot c + (-1)^{|a||b|}(b\cdot c)\ot a + (-1)^{|b||c|}(c\cdot a)\ot b$,

    \item[$I_{a,m,n}$:] $ (a\bullet m)\ot n - (-1)^{|a||m|}m\ot (a\bullet n) - (-1)^{|a|(|m|+|n|)}2(m\star n)\ot a$,
   \item[$I_{m,n,r,s}$:] $\pjm_{m,n}(r)\ot s 
+ (-1)^{(|m|+|n|)|r|}r\ot \pjm_{m,n}(s)
+
(-1)^{(|m|+|n|)(|r|+|s|)}
\pjm_{r,s}(m)\ot n 
+ (-1)^{(|r|+|s|)|n|}m\ot \pjm_{r,s}(n)
$.
    
\end{itemize}
where $a,b,c\in \J$, $m,n,r,s\in \M$ are homogeneous.
We define $\B^{s}(\J,\M)$ as the quotient
\[
\B^s(\J,\M) = (\J\ot \J\ \oplus \ \M\ot \M) / R^s(\J,\M)
\]
We will  denote by $\{a\ot b\}$ and $\{m\ot n\}$ the classes of $a\ot b$ and $m\ot n$ respectively in $\B^{s}(\J,\M)$.
We will prove that $\B^s(\J,\M)$ is naturally a Lie superalgebra,
acting by derivations on $\J$ and $\M$.

\begin{Lem}\label{Lem: Bs-1 JM}
     $\varphi: \B^s(\J,\M) \to \Innder(\J,\M)$ given by $\{a\ot b\} \mapsto \langle a,b\rjm $, $\{m\ot n\} \mapsto \pjm_{m,n}$ is surjective and $(\J\ot \J\ \oplus \ \M\ot \M)$ is a graded $\Innder(\J,\M)$-module by the natural action of $\Innder(\J,\M)$ on $\J$ and $\M$.
\end{Lem}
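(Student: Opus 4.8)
The plan is to follow the template of Lemma \ref{Lem: Bs-1}, establishing in turn that $\varphi$ is well defined on the quotient, that it is surjective, that $\J\ot\J\oplus\M\ot\M$ carries a graded $\Innder(\J,\M)$-module structure, and finally that $R^s(\J,\M)$ is a submodule so that this structure descends and $\varphi$ becomes a module map. First, for well-definedness I would check that $\varphi$ annihilates each of the five families of generators of $R^s(\J,\M)$. The images $\varphi(I_{a,b})$ and $\varphi(I_{m,n})$ vanish by the symmetry relations $\langle a,b\rjm = -(-1)^{|a||b|}\langle b,a\rjm$ and $\pjm_{m,n} = (-1)^{|m||n|}\pjm_{n,m}$ recorded in the remark after Definition \ref{defpartial}. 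The image $\varphi(I_{a,b,c})$ is exactly $(-1)^{|a||c|}$ times the left-hand side of \eqref{eq: <ab,c>}, hence zero. The image $\varphi(I_{a,m,n})$ vanishes by combining \eqref{eq: angle-partial} with the antisymmetry of $\langle-,-\rjm$ (writing $|m\star n|=|m|+|n|$), and $\varphi(I_{m,n,r,s})$ is precisely the left-hand side of \eqref{itm: D5-Der(J+M)}. Thus $R^s(\J,\M)\subseteq\Ker$ and $\varphi$ descends to $\B^s(\J,\M)$.

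Surjectivity is immediate: by Definition \ref{Def: InnDer(J,M)}, $\Innder(\J,\M)$ is spanned by the $\langle a,b\rjm$ and the $\pjm_{m,n}$, which are the images of $\{a\ot b\}$ and $\{m\ot n\}$. For the module structure I would use that, by Definition \ref{defpartial}, every generator $\langle a,b\rjm$ and $\pjm_{m,n}$ maps $\J$ into $\J$ and $\M$ into $\M$; hence each $D\in\Innder(\J,\M)$ preserves the two summands and acts on $\J\ot\J$ and on $\M\ot\M$ by the super-Leibniz rule $D(\xi\ot\zeta)=D(\xi)\ot\zeta+(-1)^{|D||\xi|}\xi\ot D(\zeta)$. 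Since $\Innder(\J,\M)$ is a Lie subalgebra of $\End(\J\oplus\M)$ by Corollary \ref{corobracket}, its natural action on $\J\oplus\M$ is a genuine graded module, so $\J\ot\J\oplus\M\ot\M$, being the even tensor-square part, is again a graded $\Innder(\J,\M)$-module; this is the second assertion.

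Finally, to upgrade $\varphi$ to a module map I would verify that $R^s(\J,\M)$ is an $\Innder(\J,\M)$-submodule by checking that $D$ sends each generator to a combination of generators. The relations $I_{a,b}$, $I_{m,n}$ and $I_{a,b,c}$ transform into sums of the same type exactly as in Lemma \ref{Lem: Bs-1}, using that $D$ superderives $\cdot$ (Lemma \ref{lema3.3}); for $I_{a,m,n}$ one additionally needs that $D$ superderives $\bullet$ and $\star$, again furnished by Lemma \ref{lema3.3}. The genuine obstacle is the quartic relation $I_{m,n,r,s}$: applying $D$ produces terms such as $D(\pjm_{m,n}(r))\ot s$, and to reassemble these into generators of $R^s(\J,\M)$ one must expand
\[
D(\pjm_{m,n}(r)) = \pjm_{D(m),n}(r) + (-1)^{|D||m|}\pjm_{m,D(n)}(r) + (-1)^{|D|(|m|+|n|)}\pjm_{m,n}(D(r)).
\]
This "derivation of $\pjm$'' identity is exactly Lemma \ref{lemaDpartial} for $D=\pjm_{p,q}$ and Lemma \ref{lemaDangle} for $D=\langle a,b\rjm$; once these are invoked, what remains is Koszul-sign bookkeeping that regroups the output as $I_{D(m),n,r,s}$-type elements. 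Hence $R^s(\J,\M)$ is a submodule, $\B^s(\J,\M)$ inherits the graded $\Innder(\J,\M)$-module structure, and $\varphi$ is a surjective module map.
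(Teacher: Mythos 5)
Your proof is correct and essentially coincides with the paper's: surjectivity because the $\langle a,b\rjm$ and $\pjm_{m,n}$ span $\Innder(\J,\M)$; well-definedness by checking that the generators of $R^s(\J,\M)$ lie in the kernel via the symmetry relations, the cyclic identity \eqref{eq: <ab,c>}, equation \eqref{eq: angle-partial}, and \eqref{itm: D5-Der(J+M)} (the paper even leaves the $I_{m,n,r,s}$ check implicit, which you make explicit); and the module structure from the fact that $\Innder(\J,\M)$ acts on $\J$ and $\M$ by super-derivations. Your last paragraph, showing that $R^s(\J,\M)$ is a submodule so that $\B^s(\J,\M)$ inherits the module structure and $\varphi$ becomes a module map, goes beyond the statement at hand: that is exactly the paper's next result, Lemma \ref{Lem: Bs-2 JM}, and your argument for it (via Lemmas \ref{lema3.3}, \ref{lemaDpartial} and \ref{lemaDangle}) is the one the paper gives there.
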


\begin{Proof}
    We know that $\langle -,-\rjm : \J\ot \J\to \Innder(\J,\M)$ and $\pjm: \M\ot \M\to \Innder(\J,\M)$ are bilinear maps and generate $\Innder(\J,\M)$. So:
    \begin{align*}
        \varphi: \J\ot \J\ \oplus \ \M\ot \M \to \Innder(\J,\M),\;\; \{a\ot b\}\mapsto \langle a,b\rjm,\; \{m\ot n\}\mapsto \pjm_{m,n}
    \end{align*}
    is surjective. It remains to see the generators of $R^{s}(\J,\M)$ are in $\Ker(\varphi)$. For $I_{a,b}$ and $I_{a,b,c}$ see Lemma \ref{Lem: Bs-1} and for $I_{m,n}$ and $I_{a,m,n}$ note that:
    \begin{itemize}
        \item $\varphi\act{I_{m,n}} = \pjm_{m,n} - (-1)^{|m||n|}\pjm_{n,m} = 0$ by the super-symmetry of $\pjm_{m,n}$.

        \item $\varphi\act{I_{a,m,n}} = \pjm_{a\bullet m,n} - (-1)^{|a||m|}\pjm_{m,a\bullet n} - (-1)^{|a|(|m|+|n|)}2\langle m\star n,a\rjm  = 0$ by \eqref{eq: angle-partial}.
    \end{itemize}
    Therefore $R^{s}(\J,\M)\subset \Ker(\varphi)$ and $\varphi:\B^{s}(\J,\M)\to \Innder(\J,\M)$ is surjective. Moreover, $\J\ot\J \;\oplus\; \M\ot\M$ is a $\Innder(\J,\M)$-module since $\Innder(\J,\M)$ acts on $\J$ and $\M$ by super-derivations.
\end{Proof}

\begin{Lem}\label{Lem: Bs-2 JM}
    $R^s(\J,\M)$ is a graded submodule of $\J\ot \J\ \oplus \ \M\ot \M$ and $\B^s(\J,\M)$ is a $\Innder(\J,\M)$-module.
\end{Lem}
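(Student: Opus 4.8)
The plan is to show that the relations space $R^s(\J,\M)$ is stable under the diagonal (super\nobreakdash-)derivation action of $\Innder(\J,\M)$ on $\J\ot\J\oplus\M\ot\M$. Gradedness is then automatic, since every generator of $R^s(\J,\M)$ is homogeneous and the action preserves degree, and the quotient module structure on $\B^s(\J,\M)$ follows immediately. Because $\Innder(\J,\M)$ is spanned by the operators $\langle a',b'\rjm$ and $\pjm_{m',n'}$ and the action is linear in $D$, it suffices to take $D$ equal to one of these two types of generators and verify that $D$ sends each of the five families $I_{a,b}$, $I_{m,n}$, $I_{a,b,c}$, $I_{a,m,n}$, $I_{m,n,r,s}$ back into the span $R^s(\J,\M)$.

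For $I_{a,b}$ and $I_{a,b,c}$, I would invoke Lemma \ref{Lem: Bs-1} verbatim: since $D|_\J$ is a super-derivation of $(\J,\cdot)$ by Lemma \ref{lema3.3}, the same computation gives $D(I_{a,b})=I_{D(a),b}+(-1)^{|D||a|}I_{a,D(b)}$ and an analogous Leibniz expansion of $D(I_{a,b,c})$ into a sum of $I_{\bullet,\bullet,\bullet}$ generators. For $I_{m,n}$ a direct expansion of the diagonal action together with the sign identity $(-1)^{|D||m|}(-1)^{|m||D(n)|}=(-1)^{|m||n|}$ yields $D(I_{m,n})=I_{D(m),n}+(-1)^{|D||m|}I_{m,D(n)}$, which again lies in $R^s(\J,\M)$. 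The family $I_{a,m,n}$ couples the two tensor summands, but since $D$ super-derives both $\bullet$ and $\star$ (Lemma \ref{lema3.3}), applying $D$ and collecting terms reproduces the same trilinear relation in its three entries, i.e. $D(I_{a,m,n})$ is a signed sum of $I_{D(a),m,n}$, $I_{a,D(m),n}$ and $I_{a,m,D(n)}$.

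The main obstacle is the four-variable family $I_{m,n,r,s}$, whose generator is built from the operation $(m,n,r)\mapsto\pjm_{m,n}(r)$ rather than from the bilinear operations alone. Here the required Leibniz rule is precisely that $D$ derives this ternary operation: for $D=\pjm_{p,q}$ this is Lemma \ref{lemaDpartial}, and for $D=\langle a',b'\rjm$ it is Lemma \ref{lemaDangle}. Using these, $D(\pjm_{m,n}(r))$ expands as $\pjm_{D(m),n}(r)+(-1)^{|D||m|}\pjm_{m,D(n)}(r)+(-1)^{|D|(|m|+|n|)}\pjm_{m,n}(D(r))$, and substituting this (and the analogous expansions for the other three terms of $I_{m,n,r,s}$) lets one reassemble $D(I_{m,n,r,s})$ as a signed sum of $I_{D(m),n,r,s}$, $I_{m,D(n),r,s}$, $I_{m,n,D(r),s}$ and $I_{m,n,r,D(s)}$. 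The only real work is the sign bookkeeping in this regrouping; no new identity beyond the cited lemmas is needed.

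Having checked all five families, $R^s(\J,\M)$ is a graded $\Innder(\J,\M)$-submodule of $\J\ot\J\oplus\M\ot\M$, and therefore $\B^s(\J,\M)=(\J\ot\J\oplus\M\ot\M)/R^s(\J,\M)$ inherits a graded $\Innder(\J,\M)$-module structure, as claimed. As a consistency check, one may note that the surjection $\varphi$ of Lemma \ref{Lem: Bs-1 JM} becomes $\Innder(\J,\M)$-equivariant for the adjoint action on the target, by Corollary \ref{corobracket}; this confirms that the action descends compatibly to $\B^s(\J,\M)$.
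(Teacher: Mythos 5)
Your proposal is correct and follows essentially the same route as the paper's proof: reduce to the generators of $\Innder(\J,\M)$, use Lemma \ref{lema3.3} (derivation of $\cdot,\bullet,\star$) together with Lemmas \ref{lemaDpartial} and \ref{lemaDangle} (derivation of the operation $(r,s,t)\mapsto\pjm_{r,s}(t)$) to expand $D$ applied to each of the five families of generators via the Leibniz rule, and conclude that $R^s(\J,\M)$ is a graded submodule so that $\B^s(\J,\M)$ inherits the $\Innder(\J,\M)$-module structure. The only additions beyond the paper's argument are cosmetic (the explicit sign bookkeeping remarks and the closing equivariance consistency check).
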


\begin{Proof}
    We need to show that $R^{s}(\J,\M)$ is invariant under the action of $\Innder(\J,\M)$. Similarly to the proof of Lemma
   \ref{Lem: Bs-1}, if
   $D:\J\oplus\M\to \J\oplus \M$ is such that $D$
derives $\cdot,\bullet$ and $\star$
and the ternary operation $r\ot s\ot t\mapsto 
\pjm_{r,s}(t)$ (see Lemmas \ref{lema3.3},
 \ref{lemaDpartial} and \ref{lemaDangle}),
 one can explicitely
see that
    \begin{align*}
        D\act{I_{a,b}} &= I_{D(a),b} + (-1)^{|a||D|}I_{a,D(b)}\\
 D\act{I_{a,b,c}}&=  I_{D(a),b,c} 
 + (-1)^{|D||a|}I_{a,D(b),c} + (-1)^{|D|(|a|+|b|)} I_{a,b,D(c)}
\\
        D\act{I_{m,n}} &= I_{D(m),n} + (-1)^{|m||D|}I_{m,D(n)}\\
        D\act{I_{a,m,n}}&= I_{D(a),m,n} + (-1)^{|D||a|} I_{a,D(m),n} + (-1)^{|D|(|m|+|a|)} I_{a,m,D(n)}\\
        D\act{I_{m,n,r,s}}&= I_{D(m),n,r,s} + (-1)^{|D||m|} I_{m,D(n),r,s} + (-1)^{|D|(|m|+|n|)} I_{m,n,D(r),s}
        + (-1)^{|D|(|m|+|n|+|r|)} I_{m,n,r,D(s)}
    \end{align*}
In particular, $R^{s}(\J,\M)$ is invariant under the 
action of $\langle a,b\rjm$ and  $\pjm_{m,n}$.
 Hence, $\B^{s}(\J,\M)$ is an $\Innder(\J,\M)$-module.
\end{Proof}

\begin{prop}With notation as in Lemma
\ref{Lem: Bs-3},
$A(B^s(\J,\M))=0$.
\end{prop}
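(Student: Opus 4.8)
The plan is to unwind Lemma \ref{Lem: Bs-3} in the present situation, taking $M=\B^s(\J,\M)$, $\g=\Innder(\J,\M)$ and $\lambda=\varphi$ as in Lemma \ref{Lem: Bs-1 JM}. By definition, $A(\B^s(\J,\M))$ is the $\Innder(\J,\M)$-submodule generated by the symmetrized expressions
\[
B(p,q):=\varphi(p)\cdot q+(-1)^{|p||q|}\varphi(q)\cdot p,\qquad p,q\in\B^s(\J,\M)\ \text{homogeneous},
\]
where $\varphi(p)\cdot q$ is the $\Innder(\J,\M)$-action on $\B^s(\J,\M)$ induced (Lemma \ref{Lem: Bs-2 JM}) from the derivation action on each tensor factor of $\J\ot\J\oplus\M\ot\M$. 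Since $B$ is graded-bilinear and $\B^s(\J,\M)$ is spanned by the classes $\{a\ot b\}$ and $\{m\ot n\}$, it suffices to prove that $B$ vanishes on the three types of generator pairs; then the generating set of $A(\B^s(\J,\M))$ consists of zeros and the submodule is trivial. Equivalently, $A=0$ is precisely the super-antisymmetry of the candidate bracket $[p,q]=\varphi(p)\cdot q$, which is what upgrades $\B^s(\J,\M)$ to a Lie superalgebra exactly as in Corollary \ref{Cor: Bs-1}.

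Two of the three cases are immediate. If $p=\{a\ot b\}$ and $q=\{c\ot d\}$ with $a,b,c,d\in\J$, then $\langle a,b\rjm$ acting on $\{c\ot d\}$ only sees the $\J\ot\J$ component and restricts to $\langle a,b\rj$, so $B(p,q)$ is literally the element computed in Lemma \ref{lema4.4}; it lies in the span of the relations $I_{a,b}$ and $I_{a,b,c}$ and hence is zero in $\B^s(\J,\M)$ (the Koszul-signed version being as indicated there). If $p=\{m\ot n\}$ and $q=\{r\ot s\}$, then expanding the two derivation actions $\pjm_{m,n}\cdot(r\ot s)$ and $\pjm_{r,s}\cdot(m\ot n)$ and collecting the signs reproduces, term by term, the generator $I_{m,n,r,s}$ of $R^s(\J,\M)$; its class in $\B^s(\J,\M)$ is zero, so $B(p,q)=0$.

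The remaining, genuinely computational, case is the mixed one $p=\{a\ot b\}$, $q=\{m\ot n\}$, where
\[
B(p,q)=\langle a,b\rjm\cdot(m\ot n)+(-1)^{(|a|+|b|)(|m|+|n|)}\pjm_{m,n}\cdot(a\ot b).
\]
Using \ref{itm: C2-Der(J+M)} the first summand lies in $\M\ot\M$, and using \ref{itm: C3-Der(J+M)} the second lies in $\J\ot\J$. I would show $B(p,q)\in R^s(\J,\M)$ by the telescoping strategy of Lemma \ref{lema4.4}: substitute the definitions and then apply the bridging relation $I_{a,m,n}$ repeatedly to transport each $\bullet$-action across the tensor sign, each application trading an $\M\ot\M$ term $(x\bullet y)\ot z$ for a term $y\ot(x\bullet z)$ together with a $\J\ot\J$ residue proportional to $(y\star z)\ot x$; the relations $I_{a,b}$ and $I_{m,n}$ reorder factors, while \ref{SJT1} and the special-module identity \eqref{eq: (a.b)bul m} recombine the Jordan products. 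The $\M\ot\M$ residues telescope onto the term $m\ot\langle a,b\rjm(n)$ already present, and the accumulated $\J\ot\J$ residues assemble, via the defining formula \ref{itm: C3-Der(J+M)} together with \eqref{eq: angle-partial}, into exactly minus the second summand, giving cancellation modulo $R^s(\J,\M)$.

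The main obstacle is precisely this mixed case: it is the only one that forces the use of the relation $I_{a,m,n}$ coupling the $\J\ot\J$ and $\M\ot\M$ summands, and the bookkeeping of Koszul signs through the repeated substitutions is the delicate point. Once it is verified, all three symmetrized brackets $B(p,q)$ vanish in $\B^s(\J,\M)$, whence $A(\B^s(\J,\M))=0$; by Lemma \ref{Lem: Bs-3} this makes $\B^s(\J,\M)$ a Lie superalgebra with bracket $[p,q]=\varphi(p)\cdot q$ and $\varphi:\B^s(\J,\M)\to\Innder(\J,\M)$ a central extension.
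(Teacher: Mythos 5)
Your proposal follows essentially the same route as the paper's proof: the same reduction by graded bilinearity to the three generator pairs, with the $\J$--$\J$ case settled by Lemma \ref{lema4.4}, the $\M$--$\M$ case being literally the relation $I_{m,n,r,s}$, and the mixed case attacked through repeated use of the bridging relation $I_{a,m,n}$. The only difference is that you sketch the mixed-case computation rather than executing it; the paper carries out precisely the strategy you describe (successively applying $I_{a,b\bullet m,n}$, $I_{b,a\bullet m,n}$, $I_{a,m,b\bullet n}$ and $I_{b,m,a\bullet n}$, after which the remaining $\star$-residues in $\J\ot\J$ cancel in pairs), which confirms that your outline closes.
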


\begin{Proof}    
Recall the notation in Lemma \ref{Lem: Bs-3},
for $\lambda:M\to \g$, the subspace $A(M)$
is given by
\[
A(M)=\langle \lambda(\mu)\nu
+(-1)^{|\mu||\nu|}\lambda(\nu)\mu: \mu,\nu\in M\rangle
\]
Here $M=\B^s(\J,\M)$, $\g=\InnDer(\J,\M)$ and $\lambda$
is determined by
\[
\{a\ot b\}\mapsto\langle a,b\rjm
\]
\[
\{m\ot n\}\mapsto\langle m,n\rjm
\]
So, for $\mu,\nu$ elements of the form 
$\{a\ot b\}$, 
$\{c\ot d\}$,
 $\{m\ot n\}$
 $\{r\ot s\}$
 (where $a,b,c,d\in\J$ and $m,n,r,s\in\M$)
  we need to prove
\[
\lambda(\{ a\ot b\})\{c\ot d\}
+(-1)^{(|a|+|b|)(|c|+|d|)}
\lambda(\{ c\ot d\})\{a\ot b\}\overset{?}{=}0
\]
\[
\lambda(\{ a\ot b\})\{m\ot n\}
+(-1)^{(|a|+|b|)(|m|+|n|)}
\lambda(\{ m\ot n\})\{a\ot b\}\overset{?}{=}0
\]
\[
\lambda(\{ m\ot n\})\{r\ot s\}
+(-1)^{(|r|+|s|)(|m|+|n|)}
\lambda(\{ r\ot s\})\{m\ot n\}\overset{?}{=}0
\]
That is, we need to prove
\[
 \langle a,b\rjm(c\ot d) 
 + (-1)^{(|a|+|b|)(|c|+|d|)}
 \langle c,d\rjm (a\ot b) \in R^{s}(\J,\M)
 \]
\[
 \langle a,b\rjm(m\ot n) 
 + (-1)^{(|a|+|b|)(|m|+|n|)}
 \pjm_{m,n}(a\ot b) \in R^{s}(\J,\M)
 \]
\[
 \pjm_{m,n}(r\ot s) + (-1)^{(|m|+|n|)(|r|+|s|)}\pjm_{r,s}(m\ot n) \in R^{s}(\J,\M)
 \]
 The first assertion is Lemma \ref{lema4.4}, while the last one is just $I_{m,n,r,s}$. 
 So, let us consider $a,b\in \J$, $m,n\in \M$, 
 we will show that:
    \begin{align*}
        (*) = \langle a,b\rjm(m\ot n) + (-1)^{(|a|+|b|)(|m|+|n|)}\pjm_{m,n}(a\ot b) \equiv 0 \mod R^{s}(\J,\M).
    \end{align*}
By the definition of the action of $\InnDer(\J,\M)$ we have:
    \begin{align*}
        (*) &= \langle a,b\rjm(m)\ot n + (-1)^{(|a|+|b|)|m|}m\ot \langle a,b\rjm(n)\\
        &\qquad\quad + (-1)^{(|a|+|b|)(|m|+|n|)}\pjm_{m,n}(a)\ot b + (-1)^{|b|(|m|+|n|)}a\ot \pjm_{m,n}(b)
    \end{align*}
    Hence, using the definition of $\langle-,-\rjm$ and $\pjm$ (see \eqref{itm: D2-Der(J+M)} and \eqref{itm: D3-Der(J+M)}) we obtain:
    \begin{align*}
        (*) &= \tfrac{1}{4}a\bullet(b\bullet m)\ot n - (-1)^{|a||b|}\tfrac{1}{4}b\bullet(a\bullet m)\ot n + (-1)^{(|a|+|b|)|m|}\tfrac{1}{4}m\ot a\bullet(b\bullet n)\\
        &\quad - (-1)^{(|a|+|b|)|m|+|a||b|}m\ot b\bullet(a\bullet n) + (-1)^{|b|(|m|+|n|)}\tfrac{1}{2}(a\bullet m)\star n\ot b\\
        &\quad - (-1)^{|b|(|m|+|n|)+|a||m|}\tfrac{1}{2}m\star(a\bullet n)\ot b + \tfrac{1}{2}a\ot (b\bullet m)\star n\\
        &\quad - (-1)^{|b||m|}\tfrac{1}{2}a\ot m\star(b\bullet n)
    \end{align*}
    From $I_{a,b\bullet m,n}\equiv 0 \mod R^s(\J,\M)$ we see that:
    \begin{align*}
        a\bullet(b\bullet m)\ot n = (-1)^{|a|(|b|+|m|)}b\bullet m\ot a\bullet n + (-1)^{|a|(|b|+|m|+n)}\tfrac{2}{\mu}(b\bullet m)\star n\ot a
    \end{align*}
    and substituting into $(*)$ we obtain:
    \begin{align*}
        (*) &= (-1)^{|a|(|b|+|m|)}\tfrac{1}{4}b\bullet m\ot a\bullet n - (-1)^{|a||b|}\tfrac{1}{4}b\bullet(a\bullet m)\ot n \\
        &\quad +(-1)^{(|a|+|b|)|m|}\tfrac{1}{4} m\ot a\bullet(b\bullet n) - (-1)^{(|a|+|b|)|m|+|a||b|}\tfrac{1}{4}m\ot b\bullet(a\bullet n) \\
        &\quad + (-1)^{|b|(|m|+|n|)}\tfrac{1}{2}(a\bullet m)\star n\ot b - (-1)^{|b|(|m|+|n|)+|a||m|}\tfrac{1}{2}m\star (a\bullet n)\ot b \\
        &\quad - (-1)^{|b||m|}\tfrac{1}{2}a\ot m\star (b\bullet n)
    \end{align*}
    From $I_{b,a\bullet m,n}\equiv 0 \mod R^s(\J,\M)$ we see that:
    \begin{align*}
        (-1)^{|a||b|}\tfrac{1}{4}b\bullet(a\bullet m)\ot n &= (-1)^{|b||m|}\tfrac{1}{4}a\bullet m\ot b\bullet n\\
        &\quad + (-1)^{|b|(|m|+|n|)}\tfrac{1}{2}(a\bullet m)\star n\ot b
    \end{align*}
So,
    \begin{align*}
        (*) &= (-1)^{|a|(|b|+|m|)}\tfrac{1}{4}b\bullet m\ot a\bullet n - (-1)^{|b||m|}\tfrac{1}{4}a\bullet m\ot b\bullet n \\
        &\quad + (-1)^{(|a|+|b|)|m|}\tfrac{1}{4}m\ot a\bullet(b\bullet n) - (-1)^{(|a|+|b|)|m|+|a||b|}\tfrac{1}{4}m\ot b\bullet(a\bullet n)\\
        &\quad - (-1)^{|b|(|m|+|n|)+|a||m|}\tfrac{1}{2}m\star(a\bullet n)\ot b - (-1)^{|b||m|}\tfrac{1}{2}a\ot m\star(b\bullet n)
    \end{align*}
    From $I_{a,m,b\bullet n}\equiv 0 \mod R^s(\J,\M)$ we see that:
    \begin{align*}
        (-1)^{(|a|+|b|)|m|}\tfrac{1}{4}m\ot a\bullet(b\bullet n) &= (-1)^{|b||m|}\tfrac{1}{4}a\bullet m\ot b\bullet n \\
        &\quad - (-1)^{|a|(|b|+|m|+|n|)+|b||m|}\tfrac{1}{2}m\star(b\bullet n)\ot a
    \end{align*}
    and then,
    \begin{align*}
        (*) &= (-1)^{|a|(|b|+|m|)}\tfrac{1}{4}b\bullet m\ot a\bullet n - (-1)^{|a|(|b|+|m|+|n|)+|b||m|}\tfrac{1}{2}m\star(b\bullet n)\ot a\\
        &\quad - (-1)^{(|a|+|b|)|m|+|a||b|}\tfrac{1}{4}m\ot b\bullet(a\bullet n) - (-1)^{|b|(|m|+|n|)+|a||m|}\tfrac{1}{2}m\star(a\bullet n)\ot b\\
        &\quad - (-1)^{|b||m|}\tfrac{1}{2}a\ot m\star(b\bullet n)
    \end{align*}
    Finally, from $I_{b,m,a\bullet n}\equiv 0 \mod R^s(\J,\M)$ we see that:
    \begin{align*}
        (-1)^{(|a|+|b|)|m|+|a||b|}\tfrac{1}{4}m\ot b\bullet(a\bullet n) &= (-1)^{|a|(|b|+|m|)}\tfrac{1}{4}b\bullet m\ot a\bullet n\\
        &\quad - (-1)^{(|a|+|b|)|m|+|b||n|}\tfrac{1}{2}m\star(a\bullet n)\ot b
    \end{align*}
    and substituting into $(*)$ we obtain:
    \begin{align*}
        (*) &= -(-1)^{|a|(|b|+|m|+|n|)+|b||m|}\tfrac{1}{2}m\star(b\bullet n)\ot a + (-1)^{(|a|+|b|)|m|+|b||n|}\tfrac{1}{2}m\star(a\bullet n)\ot b\\
        &\quad - (-1)^{(|a|+|b|)|m|+|b||n|}\tfrac{1}{2}m\star(a\bullet n)\ot b + (-1)^{|a|(|b|+|m|+|n|)+|b||m|}\tfrac{1}{2}m\star(b\bullet n)\ot a\\
        &\equiv 0 \mod R^s(\J,\M)
    \end{align*}
\end{Proof}

As immediate consequence we have the following:

\begin{Teor}\label{Teor: Bs-1 JM}
    $\B^s(\J,\M)$ is a Lie superalgebra with 
    brackets given by
    \begin{align*}
   [\{a\ot b\}, \{c\ot d\}] &:= \langle a,b\rjm\act{\{c\ot d\}}\\
   [\{a\ot b\}, \{m\ot n\}] &:= \langle a,b\rjm(\{m\ot n\})\\
   [\{m\ot n\}, \{r\ot s\}] &:= \pjm_{m,n}\act{\{r\ot s\}}
\end{align*}
and $ \phi: \B^s(\J,\M) \to \Innder(\J,\M)$, determined
 by $ \{a\ot b\}\mapsto\langle a,b\rjm$ 
 and  $\{m\ot n\}\mapsto\pjm_{m,n}$
        is a central extension of Lie superalgebras.
\end{Teor}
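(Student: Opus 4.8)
The plan is to apply the organizing argument of Lemma \ref{Lem: Bs-3} verbatim, taking $M = \B^s(\J,\M)$, $\g = \InnDer(\J,\M)$, and $\lambda = \varphi$ the map of Lemma \ref{Lem: Bs-1 JM}. First I would record that the hypotheses of Lemma \ref{Lem: Bs-3} are already established: Lemma \ref{Lem: Bs-2 JM} shows $R^s(\J,\M)$ is an $\InnDer(\J,\M)$-submodule, so the quotient $\B^s(\J,\M)$ is a graded $\InnDer(\J,\M)$-module, and Lemma \ref{Lem: Bs-1 JM} shows $\varphi$ is a surjective morphism of $\InnDer(\J,\M)$-modules. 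Thus $(M,\g,\lambda)$ meets every assumption of the organizing lemma.

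With these in hand, Lemma \ref{Lem: Bs-3} produces a Lie superalgebra structure on $Q = \B^s(\J,\M)/A(\B^s(\J,\M))$ with bracket $[p,q] = \varphi(p)\cdot q$, where $\varphi(p)\cdot q$ denotes the module action of $\varphi(p)\in\InnDer(\J,\M)$ on $q$, and it guarantees that the induced map $Q\to\InnDer(\J,\M)$ is a central extension. The decisive simplification is supplied by the preceding Proposition, which asserts $A(\B^s(\J,\M)) = 0$; hence $Q = \B^s(\J,\M)$ and the bracket is defined directly on $\B^s(\J,\M)$ itself.

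It then remains only to identify $[p,q] = \varphi(p)\cdot q$ on generators. Since $\varphi(\{a\ot b\}) = \langle a,b\rjm$ and $\varphi(\{m\ot n\}) = \pjm_{m,n}$, and the action is the natural one on $\J\ot\J\oplus\M\ot\M$ passed to the quotient, the three displayed brackets are recovered immediately. Super-antisymmetry requires no separate check: quotienting by $A(M)$ is precisely what imposes $[p,q] = -(-1)^{|p||q|}[q,p]$, and because $A(M) = 0$ this identity already holds in $\B^s(\J,\M)$, which is exactly why the two alternate expressions written for each bracket agree. For the central extension statement, $\varphi$ is surjective by Lemma \ref{Lem: Bs-1 JM}, it is a homomorphism of Lie superalgebras because the module-morphism property gives $\varphi([p,q]) = \varphi(\varphi(p)\cdot q) = [\varphi(p),\varphi(q)]$, and $\ker\varphi$ is central since $[z,q] = \varphi(z)\cdot q = 0$ whenever $\varphi(z) = 0$.

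Essentially all the genuine work lies in the supporting results rather than in this final assembly, and the main obstacle is the vanishing $A(\B^s(\J,\M)) = 0$ proved in the Proposition. Its only truly new ingredient beyond Lemma \ref{lema4.4} (the purely Jordan case) and the defining relation $I_{m,n,r,s}$ is the mixed congruence $\langle a,b\rjm(m\ot n) + (-1)^{(|a|+|b|)(|m|+|n|)}\pjm_{m,n}(a\ot b)\equiv 0 \bmod R^s(\J,\M)$, which is forced by successive applications of the relations $I_{a,b\bullet m,n}$, $I_{b,a\bullet m,n}$, $I_{a,m,b\bullet n}$ and $I_{b,m,a\bullet n}$. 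Once that congruence is secured, the theorem follows formally from Lemma \ref{Lem: Bs-3}.
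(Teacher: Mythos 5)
Your proposal is correct and follows exactly the paper's intended argument: the paper presents this theorem as an ``immediate consequence'' of Lemmas \ref{Lem: Bs-1 JM} and \ref{Lem: Bs-2 JM}, the organizing Lemma \ref{Lem: Bs-3}, and the Proposition that $A(\B^s(\J,\M))=0$, which is precisely the assembly you describe, including the observation that vanishing of $A$ makes super-antisymmetry automatic and that centrality of $\Ker\varphi$ is built into the bracket $[p,q]=\varphi(p)\cdot q$. The only cosmetic imprecision is attributing the $\InnDer(\J,\M)$-module-morphism property of $\varphi$ to Lemma \ref{Lem: Bs-1 JM} alone; strictly speaking that property rests on the bracket formulas of Corollary \ref{corobracket}, but this is exactly the ingredient the paper itself uses implicitly, so the substance is unchanged.
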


\subsection{Definition of $\Tag(\J,\M)$}

\begin{defi}
 We define $\Tag(\J,\M)$ as the vector superspace
\begin{align}\label{eq: Desc-hat(g)2}
    \Tag(\J,\M) = \sl_2\ot \J \; \oplus V\ot\M \; \oplus \; \B^s(\J,\M)
\end{align}
where $\lambda,\mu\in\C$, $|V|=|\sl_2|=0$ and we will define on $\Tag(\J,\M)$ the following brackets:

\begin{enumerate}[label=\roman*]
    \item $[\{ a\ot b\}, \{c\ot d\}] = \langle a,b\rjm(\{c\ot d\})$,

    \item $[\{ a\ot b\}, \{m\ot n\}] = \langle a,b\rjm(\{m\ot n\})$,

    \item $[\{m\ot n\}, \{r\ot s\}] = \pjm_{m,n}(\{r\ot s\})$,

    \item $[\{a\ot b\}, x\ot c] = x\ot \langle a,b\rjm\act{c}$,

    \item $[\{m\ot n\}, x\ot c] = x\ot \pjm_{m,n}\act{c}$,

    \item $[\{a\ot b\}, v\ot m] = v\ot\langle a,b\rjm\act{m}$,

    \item $[\{m\ot n\}, v\ot r] = v\ot\pjm_{m,n}\act{r}$,

    \item $[x\ot a,y\ot b] = [x,y]\ot a\cdot b + \tfrac{1}{2}\killing(x,y)\{ a\ot b\}$.

    \item $[x\ot a,v\ot m] = x(v)\ot a\bullet m$,

    \item $[v\ot m,w\ot n] = v\odot w\ot m\star n + \det(v,w)\{m\ot n\}$
\end{enumerate}
for \(x, y \in \sl_2\), \(v, w \in V\), \(a, b, c \in \J\) and \(m, n, r \in \M\). Note that \(\Tag(\J,\M)\) is an object of \(\ShsL\), where
\[
\Tag(\J,\M)^{ad} = \sl_2 \otimes \J, \quad \Tag(\J,\M)^{\sl_2} = \B^s(\J,\M)
\]
\end{defi}

\begin{rem}
Formulas i-vii can be compactly written as

\begin{enumerate}
    \item[i-iii] $[D, \{z_1\ot z_2\}] =
     \{D(z_1)\ot z_2\}+(-1) ^{ |D||z_1|}
    \{z_1\ot D(z_2)\}
     $, \\
     where 
     $D=\{a\ot\b\}$ acting by 
     $\langle a,b\rjm $ or $D=\{m\ot n\}$ acting 
     by $\pjm_{m,n}$,
     if $z_1,z_2=a,b\in\J$ or $z_1,z_2=r,s\in\M$
     respectively, and
    \item[iv-v] $[D, x\ot a'] = x\ot D\act{a'}$,
    \item[vi-vii] $[D, v\ot m'] = v\ot D\act{m'}$,
where $D\in \B^s(\J,\M)$.
\end{enumerate}

\end{rem}

\begin{Teor}
    $\Tag(\J,\M)$ is a $\sl_2$-Lie superalgebra in the category $\ShsL$.
\end{Teor}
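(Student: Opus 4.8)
\begin{Proof}
The plan is to apply the characterization of Theorem~\ref{Teo: STits-1} with $\D=\B^s(\J,\M)$, declaring $\langle a,b\rg:=\{a\ot b\}$ and $\pg_{m,n}:=\{m\ot n\}$ and letting $\D$ act on $\J$ and $\M$ through the central extension $\phi\colon\B^s(\J,\M)\to\Innder(\J,\M)$ of Theorem~\ref{Teor: Bs-1 JM}. First I would observe that, under these identifications, the brackets i--x have exactly the shape \eqref{eq: Brk-g-1}--\eqref{eq: Brk-g-5}: viii--x reproduce \eqref{eq: Brk-g-3}--\eqref{eq: Brk-g-5}; iv--vii are the action \eqref{eq: Brk-g-1}--\eqref{eq: Brk-g-2} of $\D$ on $\sl_2\ot\J$ and $V\ot\M$, i.e. the $\Innder(\J,\M)$-action on $\J$ and $\M$ pulled back along $\phi$; and i--iii are the internal bracket of $\D$. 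Super-antisymmetry of the bracket is then immediate, the relations $I_{a,b}$ and $I_{m,n}$ providing the super-antisymmetry of $\langle-,-\rg$ and the super-symmetry of $\pg$, the remaining symmetries coming from those of $\cdot,\bullet,\star$. It remains to verify the list of conditions of Theorem~\ref{Teo: STits-1}.

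Next I would dispatch the conditions that are intrinsic to the $\J$-ternary structure and to Definition~\ref{defpartial}. By hypothesis $(\J,\cdot)$ is Jordan and $\M$ is a special supermodule; the identities \eqref{eq: <a,b>-1 JM}, \eqref{eq: <a,b>-2 JM} and \eqref{eq: partial_{m,n}-1} are exactly \ref{itm: C1-Der(J+M)}, \ref{itm: C2-Der(J+M)} and \ref{itm: C3-Der(J+M)} read through brackets iv, vi and v; while \eqref{eq: a.(m*n)} is \ref{SJT1} and \eqref{eq: partial_{m,n}-2} is \ref{SJT3}. The two conditions recording the passage to the quotient are \eqref{eq: <ab,c>}, which is the vanishing of $I_{a,b,c}$ in $\B^s(\J,\M)$, and \eqref{eq: angle-partial}, which is the vanishing of $I_{a,m,n}$ after identifying $\{(m\star n)\ot a\}=\langle m\star n,a\rg$ and using super-antisymmetry to pass to $\langle a,m\star n\rg$.

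I would then treat the conditions expressing that $\D$ acts by derivations and is itself a Lie superalgebra, all of which follow from Section~\ref{Subsec: J-ternary super}. The derivation laws \eqref{eq: d(a dot b)}, \eqref{eq: d(a bul m)}, \eqref{eq: d(m star n)} hold because each generator of $\Innder(\J,\M)$ superderives $\cdot,\bullet,\star$ (Lemma~\ref{lema3.3}) and $\D$ acts through $\phi$; the commutator law \eqref{eq: [d1,d2]} holds because $\phi$ is a morphism of Lie superalgebras, so the bracket taken in $\B^s(\J,\M)$ and then applied to $\J\oplus\M$ equals the commutator of operators; and \eqref{eq: [d,<a,b>]}, \eqref{eq: [d,partial_{m,n}]} are just the compact rewriting i--iii, valid because $\J\ot\J\oplus\M\ot\M$ is an $\Innder(\J,\M)$-module for the diagonal action and $R^s(\J,\M)$ is a submodule, so the action descends to $\B^s(\J,\M)$ (Lemmas~\ref{Lem: Bs-1 JM} and \ref{Lem: Bs-2 JM}).

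The hard part, and the only non-formal ingredient, is the Jacobi identity among three elements of $\D=\B^s(\J,\M)$ together with the compatibility of the internal bracket of $\B^s(\J,\M)$ with its action on $\J\oplus\M$. This is precisely Theorem~\ref{Teor: Bs-1 JM}, whose proof rests on the vanishing $A(\B^s(\J,\M))=0$ and the organizing Lemma~\ref{Lem: Bs-3}; granting it, all remaining instances of the super Jacobi identity are subsumed by the conditions already checked. Hence every hypothesis of Theorem~\ref{Teo: STits-1} is met and $\Tag(\J,\M)$ is a short $\sl_2$-Lie superalgebra, i.e. an object of $\ShsL$.
\end{Proof}
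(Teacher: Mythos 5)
Your proposal is correct and follows essentially the same route as the paper: the paper's own proof is a one-line reduction to the verification already done for $\Tkk(\J,\M)$ (Theorem \ref{Teor: sJ-tern to til(g)}, i.e.\ checking the conditions of Theorem \ref{Teo: STits-1}) together with the fact that $\phi\colon \B^s(\J,\M)\to\Innder(\J,\M)$ is a Lie superalgebra map (Theorem \ref{Teor: Bs-1 JM}), which is exactly the skeleton you flesh out. Your extra care in noting that the identities \eqref{eq: <ab,c>} and \eqref{eq: angle-partial} now hold in $\B^s(\J,\M)$ by the defining relations $I_{a,b,c}$ and $I_{a,m,n}$ (rather than as operator identities, since the action through $\phi$ is not faithful), and that the Jacobi identity among elements of $\B^s(\J,\M)$ requires Theorem \ref{Teor: Bs-1 JM} and not merely condition \eqref{eq: [d1,d2]}, makes explicit precisely what the paper's phrase ``with the same proof'' is implicitly relying on.
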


\begin{Proof} 
 It is immediate with the same proof that $\Tkk(\J,\M)$
 is a super Lie algebra (Theorem \ref{Teor: sJ-tern to til(g)})
 and the fact that 
 \begin{align*}
 \B^{s}(\J,\M)&\to \Innder(\J,\M)
 \\
 \{a\ot b\}&\mapsto\langle a,b\rjm
 \\
 \{m\ot n\}&\mapsto\pjm_{m,n}
 \end{align*}
 is a Lie algebra map.
\end{Proof}

The advantage of $\Tag(\J,\M)$ with respect
 to $\Tkk(\J,\M)$ is
it obvious functorial nature.

\begin{Cor}
    $\Tag:\JterS \to \ShsL$ is a functor from the category of $\J$-ternary superalgebras $\JterS$ to the category $\ShsL$.
\end{Cor}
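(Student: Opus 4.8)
The plan is to first specify how $\Tag$ acts on morphisms, then check that the resulting map is a well-defined morphism in $\ShsL$, and finally observe that the assignment is strictly functorial. Given a morphism $\eta=(\eta_1,\eta_2):(\J,\M)\to(\wt\J,\wt\M)$ in $\JterS$, I would define $\Tag(\eta):\Tag(\J,\M)\to\Tag(\wt\J,\wt\M)$ as the identity on the $\sl_2$- and $V$-tensor legs combined with $\eta$ on the remaining factors, namely
\[
x\ot a\mapsto x\ot\eta_1(a),\quad v\ot m\mapsto v\ot\eta_2(m),\quad \{a\ot b\}\mapsto\{\eta_1(a)\ot\eta_1(b)\},\quad \{m\ot n\}\mapsto\{\eta_2(m)\ot\eta_2(n)\}.
\]
The single fact underlying everything is that $\eta$ intertwines the defining operators of $\Innder(\J,\M)$: reading off the formulas \eqref{itm: D1-Der(J+M)}--\eqref{itm: D4-Der(J+M)}, and using that $\eta_1$ is a Jordan map while $\eta_2$ respects $\bullet$, $\star$ and the triple product, one gets immediately
\[
\eta_1\big(\langle a,b\rjm\act{c}\big)=\langle\eta_1(a),\eta_1(b)\rangle_{\wt\J,\wt\M}\big(\eta_1(c)\big),\qquad \eta_2\big(\pjm_{m,n}\act{r}\big)=\partial^{\wt\J,\wt\M}_{\eta_2(m),\eta_2(n)}\big(\eta_2(r)\big),
\]
together with the analogous identities for the mixed evaluations $\langle a,b\rjm\act{m}$ and $\pjm_{m,n}\act{a}$.

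The first thing to verify is that the prescription on $\B^s$ descends to the quotient, i.e. that $\eta_1\ot\eta_1\oplus\eta_2\ot\eta_2$ carries $R^s(\J,\M)$ into $R^s(\wt\J,\wt\M)$. Since $\eta$ is degree-preserving and homomorphic for $\cdot$, $\bullet$ and $\star$, the generators $I_{a,b}$, $I_{m,n}$, $I_{a,b,c}$ and $I_{a,m,n}$ map to the corresponding generators of $R^s(\wt\J,\wt\M)$ with $a,b,c$ replaced by $\eta_1(a),\eta_1(b),\eta_1(c)$ and $m,n$ by $\eta_2(m),\eta_2(n)$. The only relation that is not immediate is $I_{m,n,r,s}$, because it is written through $\pjm_{m,n}$; here one invokes the intertwining of $\partial$ displayed above, which is precisely where the compatibility of $\eta_2$ with the triple product is used. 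Granting this, $\B^s(\eta)$ is well defined, and then the bracket relations (i)--(x) of Definition of $\Tag(\J,\M)$ are preserved one by one: (viii)--(x) follow from $\eta_1(a\cdot b)=\eta_1(a)\cdot\eta_1(b)$, $\eta_2(a\bullet m)=\eta_1(a)\bullet\eta_2(m)$ and $\eta_1(m\star n)=\eta_2(m)\star\eta_2(n)$ together with the definition of $\B^s(\eta)$, while (i)--(vii) reduce exactly to the intertwining identities for $\langle-,-\rjm$ and $\pjm$.

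Since $\Tag(\eta)$ is by construction the identity on the $\sl_2$- and $V$-legs, and the $\sl_2$-action on $\Tag(\J,\M)$ lives entirely on those legs (with $\B^s(\J,\M)=\Tag(\J,\M)^{\sl_2}$ being $\sl_2$-invariant), the map $\Tag(\eta)$ is automatically $\sl_2$-equivariant and is therefore a morphism in $\ShsL$. Functoriality is then immediate: $\Tag(\id)$ is assembled from $\id_{\sl_2}\ot\id_\J$, $\id_V\ot\id_\M$ and the identity on $\B^s$, hence equals the identity, and $\Tag(\eta\circ\eta')=\Tag(\eta)\circ\Tag(\eta')$ holds separately on each of the three summands because composition of the tensor-leg maps and of the induced maps on $\B^s$ is computed componentwise. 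I expect the only genuinely delicate point to be the well-definedness of $\B^s(\eta)$ on the relation $I_{m,n,r,s}$, i.e. establishing that $\eta$ really does intertwine the operators $\pjm_{m,n}$; once that is in hand, the rest is bookkeeping of Koszul signs and of the three structure-preserving identities defining a morphism in $\JterS$.
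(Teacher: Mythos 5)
Your proof is correct and fills in exactly the argument the paper treats as immediate: the paper offers no written proof for this corollary, appealing only to the ``obvious functorial nature'' of $\Tag(\J,\M)$, which is precisely the generators-and-relations verification you carry out (morphisms in $\JterS$ send the relations $I_{a,b}$, $I_{m,n}$, $I_{a,b,c}$, $I_{a,m,n}$, $I_{m,n,r,s}$ of $R^s(\J,\M)$ into the corresponding relations of the target, hence descend to $\B^s$, and then preserve the brackets by construction). Your identification of $I_{m,n,r,s}$ as the one delicate relation --- handled by showing that $(\eta_1,\eta_2)$ intertwines the operators $\pjm_{m,n}$, which is where compatibility with the triple product enters --- is exactly right.
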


\section{The Tits functor for ternary Jordan super
 algebras\label{Subsec: funtor hat(T)}}

In \cite{S22} is described the super case of the Tits functor $\T=\J(-)$,
which assigns to each very short  super Lie algebra $\g $, 
a Jordan superalgebra $\T(\g )=(\J(\g ),\cdot)$, 
describing its product through the Lie bracket:
\[
    \J (\g):= \{a\in \g  :\: ha=2a\}
\]
\[
a\cdot b:=\tfrac12[a, fb]
\]
where as usual $h$ and $f$ stands for the usual generators of $\sl_2$.
In this section we consider the generalization for
short (not necessarily very short) super Lie algebras, and
 we prove that this functor is right adjoint to  {\em right adjoint}
to  $\Tag(\J,\M)$.
Recall the definition of $\J(\g )$, $\M(\g )$
and the operations $\cdot,\bullet,\star, (-,-,-)$ associated to 
a short $\sl_2$-super Lie algebra $\g$.

\begin{defi}\label{defternary}
Given a short $\sl_2$-Lie superalgebra $\g $, we define the following subspaces:
\begin{align}\label{eq: Desc JM in g}
    \J(\g ) = \{a\in \g  :\: ha=2a\}
    ,\hskip 1cm 
    \M(\g ) = \{m\in \g :\: hm=m\}
\end{align}
where the operations $\cdot:\J\ot \J\to \J$, $\bullet:\J\ot \M\to \M$, $\star:\M\ot \M\to \J$, $(-,-,-):\M^{ \ot 3}\to\M$
 are given by:
\begin{equation}\label{eq: operations on J+M}
\begin{aligned}
    2a\cdot b &:= [a,fb]=-[fa,b],\\
    a\bullet m&:= [a,fm]=-[fa,m],\\
    m\star n  &:= [m,n],\\
    (m,n,r)  &:= -[[m,fn],r]
\end{aligned}
\end{equation}
\end{defi}

\begin{lem}\label{rem9}
From the ternary product and $\cdot,\bullet,\star$ as above
one can recover $\langle-,-\rjm$ and $\pjm$ acting
on $\J(\g)\oplus\M(\g)$
by
\begin{align}\label{eq: partial_{m,n} brk-1}
    \langle a,b\rjm = \tfrac14\Big[[fa,fb],-\Big],
    \hskip 1cm
    \pjm_{m,n} = \tfrac12\Big[[m,fn]-[fm,n],-\Big]
\end{align}
\end{lem}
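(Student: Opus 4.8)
The plan is to observe that the endomorphisms $\langle a,b\rjm$ and $\pjm_{m,n}$ of Definition \ref{defpartial} are defined \emph{purely} in terms of the ternary structure $(\J,\M,\cdot,\bullet,\star,(-,-,-))$, and to show that, when this structure is the one attached to $\g$ by Definition \ref{defternary}, they coincide with the \emph{adjoint} action of two distinguished elements of $\D=\g^{\sl_2}$, whose Lie-theoretic expressions are already recorded in Proposition \ref{propdic1}. Concretely, by \eqref{langlelie} and \eqref{partiallie} the elements $\langle a,b\rg,\pg_{m,n}\in\D$ satisfy $\langle a,b\rg=\tfrac14[fa,fb]$ and $\pg_{m,n}=\tfrac12\big([m,fn]-[fm,n]\big)$ (with the abbreviation $fa=f\wt a$, etc.). Hence the right-hand sides of the two claimed formulas are precisely $[\langle a,b\rg,-]$ and $[\pg_{m,n},-]$, and the task reduces to identifying these adjoint actions with $\langle a,b\rjm$ and $\pjm_{m,n}$.

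Since $\D$ acts on $\g$ by $\sl_2$-linear Lie derivations, its elements preserve $\J(\g)$ and $\M(\g)$ (Remark \ref{rem5}), so I would verify the two identities on each summand separately. On $\J(\g)$ the bracket $[\langle a,b\rg,-]$ reproduces $\langle a,b\rg\act{c}$, which by \eqref{eq: <a,b>-1 JM} equals $a\cdot(b\cdot c)-(-1)^{|a||b|}b\cdot(a\cdot c)$, i.e.\ exactly the defining formula \ref{itm: C1-Der(J+M)}; likewise $[\pg_{m,n},-]$ reproduces $\pg_{m,n}\act{a}$, which by \eqref{eq: partial_{m,n}-1} matches \ref{itm: C3-Der(J+M)}. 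On $\M(\g)$ equation \eqref{eq: <a,b>-2 JM} reads $4\langle a,b\rg\act{m}=a\bullet(b\bullet m)-(-1)^{|a||b|}b\bullet(a\bullet m)$, which is $4\langle a,b\rjm\act{m}$ by \ref{itm: C2-Der(J+M)}; and combining the definition \eqref{eq: STrProd-Jordan} of the triple product with \ref{itm: C4-Der(J+M)} gives $\pg_{m,n}\act{r}=\tfrac12(m\star n)\bullet r-(m,n,r)=\pjm_{m,n}\act{r}$. In all four cases the two descriptions agree.

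The only point requiring care is the bookkeeping of normalization constants: the action of $\langle a,b\rg$ on $\J$ carries no factor while its action on $\M$ carries a factor $\tfrac14$ (compare \eqref{eq: <a,b>-1 JM} with \eqref{eq: <a,b>-2 JM}), and it is exactly the asymmetric normalization built into Definition \ref{defpartial} (no factor in \ref{itm: C1-Der(J+M)}, the $\tfrac14$ in \ref{itm: C2-Der(J+M)}) that makes $\langle a,b\rjm$ agree with $[\langle a,b\rg,-]$ \emph{simultaneously} on both summands; the analogous $\tfrac12$'s match for $\pjm_{m,n}$. Once this normalization is reconciled, the two claimed formulas follow immediately from the dictionary, and no further computation is needed.
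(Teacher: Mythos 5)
Your proof is correct, but it takes a genuinely different route from the paper's. The paper argues by direct computation: it substitutes the formulas of Definition \ref{defternary} ($2a\cdot b=[a,fb]=-[fa,b]$, $a\bullet m=-[fa,m]$, $m\star n=[m,n]$, $(m,n,r)=-[[m,fn],r]$) into the four defining formulas of Definition \ref{defpartial} and simplifies each case with the super Jacobi identity, the derivation property of $f$, and the weight argument that the $h$-eigenvalue $3$ does not occur (e.g.\ $[\M(\g),\J(\g)]=0$). You argue structurally: by \eqref{langlelie} and \eqref{partiallie} the claimed right-hand sides are the adjoint operators $[\langle a,b\rg,-]$ and $[\pg_{m,n},-]$, and the action of $\D$ on $\J(\g)\oplus\M(\g)$ is already recorded in Theorem \ref{Teo: STits-1}, whose identities for $\langle a,b\rg\act{c}$, $4\langle a,b\rg\act{m}$ and $2\pg_{m,n}\act{a}$, together with the triple-product formula \eqref{eq: STrProd-Jordan}, reproduce Definition \ref{defpartial} term by term; the Jacobi-identity work is thus inherited from the proof of Theorem \ref{Teo: STits-1} rather than redone. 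Your route is shorter and makes the conceptual content of the lemma explicit: $\langle a,b\rjm$ and $\pjm_{m,n}$ are exactly the restrictions to $\J(\g)\oplus\M(\g)$ of the adjoint actions of $\langle a,b\rg,\pg_{m,n}\in\D$ (cf.\ Remark \ref{rem5}). The paper's route buys self-containedness: it uses only Definition \ref{defternary} and no isotypic bookkeeping, which is not an idle virtue here, since \eqref{cdotlie} as printed disagrees in sign with Definition \ref{defternary}; your argument is unaffected only because it relies on \eqref{langlelie} and \eqref{partiallie}, which are mutually consistent with Definition \ref{defternary}. One reference you should add to be fully rigorous: in the setting of this lemma the triple product is defined as $(m,n,r)=-[[m,fn],r]$, so treating \eqref{eq: STrProd-Jordan} as ``the definition'' in your fourth case tacitly uses the remark (with its short proof) following Theorem \ref{Teor: (sJ,sM) as sJ-tern}, which establishes that the two expressions for $(m,n,r)$ coincide.
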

\begin{proof}
Using the definition of $\langle a,b\rjm$ acting on $\J$,
$\langle a,b\rjm \act c 
= a\cdot(b\cdot c) - (-1)^{|a||b|}b\cdot(a\cdot c)$
\[
=\tfrac14 \big([fa,[fb,c]] -(-1)^{|a||b|}[fb,[fa,c]]\big)
=[\tfrac14 [fa,fb],c]
\]

Using the definition of $\langle a,b\rjm$ acting on $\M$,
$\langle a,b\rjm \act m 
= \tfrac14\big(
a\bullet(b\bullet m) - (-1)^{|a||b|}b\bullet(a\bullet m)$,
\[
= \tfrac14\big(
[fa,fb,m]] - (-1)^{|a||b|}[fb,[fa,m]]
\big)
=[\tfrac14 [fa,fb],m]
\]

By the definition of $\pjm_{m,n}$ acting on $\J$:
\[
\pjm_{m,n}\act a 
=\tfrac12\big(
 (-1)^{|a|(|m|+|n|)}(a\bullet m)\star n - (-1)^{|a||n|}m\star(a\bullet n)
 \big)
 \]
\[
=\tfrac12\big(
 (-1)^{|a|(|m|+|n|)}[[a,fm],n] 
 - (-1)^{|a||n|}[m,[a,f n]]
 \big)
 \]
\[
=\tfrac12\big(
 (-1)^{|n||m|)}[n,[fm,a]] 
+[m,[f n,a]]
 \big)
 \]
  and using $[\M,\J]=0$,
\[
=\tfrac12\big(
 (-1)^{|n||m|)}[n,[fm,a]] 
-[fm,[n,a]] 
+[m,[f n,a]]
-(-1) ^{|m||n|}[fn,[m,a]]
 \big)
 \]
\[
=\tfrac12\big[
[m,f n]
-[fm,n],
a\big]
 \]
And finally, 
with  the definition of $\pjm_{m,n}$ acting on $\M$ we have
$\pjm_{m,n}\act{r} 
= \tfrac{1}{2}(m\star n)\bullet r - (m,n,r)$

\[
= -\tfrac{1}{2}[f[m,n],r] + [[m,fn],r]
\]
\[
= -\tfrac{1}{2}[[fm,n],r]
-\tfrac{1}{2}[[m,fn],r]
 + [[m,fn],r]
\]
\[
= -\tfrac{1}{2}[[fm,n],r]
 +\tfrac12 [[m,fn],r]
=
\tfrac12\big[ [m,fn]-[fm,n],
r\big]
\]
\end{proof}

\begin{Teor}\label{teogsl2ternary}
If $\g $ is a short $\sl_2$-super Lie algebra,
 then     $(\J,\M)$ together  with the above operations
 $\cdot,\bullet,\star$ and $(-,-,-)$ is a $\J$-ternary
  superalgebra.
\end{Teor}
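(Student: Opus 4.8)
The plan is to recognize that Theorem~\ref{teogsl2ternary} is, up to bookkeeping, a restatement of Theorem~\ref{Teor: (sJ,sM) as sJ-tern}. The latter already asserts that, for a short $\sl_2$-super Lie algebra $\g$, the pair $(\J,\M)$ is a $\J$-ternary superalgebra with triple product $-\pg_{m,n}(s)+\tfrac12(m\star n)\bullet s$. The only new content of Theorem~\ref{teogsl2ternary} is that the intrinsic, purely Lie-theoretic operations written in Definition~\ref{defternary} are literally the same as the structure operations $\cdot,\bullet,\star$ and the triple product that come from the $\sl_2$-module decomposition \eqref{eq: sl2-corta tipo 1}. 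So the whole task reduces to identifying the two families of operations and then quoting the axiom verification already done.

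First I would invoke the dictionary of Proposition~\ref{propdic1}. Under the identifications $\J(\g)=e\ot\J\cong\J$ and $\M(\g)=e_1\ot\M\cong\M$, that proposition expresses each structure constant of the decomposition ($\cdot$, $\bullet$, $\star$, as well as $\langle-,-\rg$ and $\pg$) through iterated Lie brackets, and these Lie-theoretic expressions are exactly the intrinsic operations recorded in Definition~\ref{defternary} (for instance $2a\cdot b=[a,fb]=-[fa,b]$, $a\bullet m=[a,fm]$, $m\star n=[m,n]$). Hence the binary operations of Definition~\ref{defternary} are not new data: they coincide with the operations entering the $\J$-ternary structure of Theorem~\ref{Teor: (sJ,sM) as sJ-tern}, and there is nothing to check beyond reading off the dictionary.

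Next I would match the triple product. The Remark immediately following Theorem~\ref{Teor: (sJ,sM) as sJ-tern} rewrites $(m,n,s)=-\pg_{m,n}(s)+\tfrac12(m\star n)\bullet s$ in Lie-theoretic form as $-[[m,fn],s]$, which is precisely the definition $(m,n,r):=-[[m,fn],r]$ used here; and Lemma~\ref{rem9} supplies the complementary identification of the inner maps $\langle a,b\rjm$ and $\pjm_{m,n}$ from the same brackets. With every operation matched, the axioms \ref{SJT1}--\ref{SJT6} hold verbatim by Theorem~\ref{Teor: (sJ,sM) as sJ-tern}, and the proof is complete.

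Alternatively, one could give a self-contained verification: substitute the bracket formulas of Definition~\ref{defternary} into each axiom and reduce it to super-antisymmetry and the super Jacobi identity, exactly as in the proof of Theorem~\ref{Teor: (sJ,sM) as sJ-tern}, using repeatedly the vanishing of eigenvalue-$3$ brackets such as $[m,[r,s]]=0$. In that route the main obstacle is the ternary axiom \ref{SJT6}: it is the only one genuinely cubic in the triple product and cannot be dispatched by a single Jacobi rearrangement, instead requiring the ``$\pjm$ derives $\pjm$'' identity recorded in \eqref{ganchogancho2} of Remark~\ref{Prop: deriv-rel}. I therefore expect the dictionary-plus-citation route to be by far the cleanest, reserving the direct Jacobi computation only as a sanity check on \ref{SJT6}.
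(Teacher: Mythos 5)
Your proposal is correct and takes essentially the same route as the paper's own proof, which likewise reduces Theorem~\ref{teogsl2ternary} to the combination of the dictionary in Proposition~\ref{propdic1} with the axiom verification of Theorem~\ref{Teor: (sJ,sM) as sJ-tern}. Your explicit matching of the triple product via $(m,n,s)=-[[m,fn],s]$ and of the inner maps via Lemma~\ref{rem9} simply spells out identifications that the paper's short proof leaves implicit.
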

\begin{Proof}
 We already know that if $\g $ is a short $\sl_2$-super Lie algebra then $\g \cong \sl_2\ot\J\oplus V\ot\M\oplus
 \D$, where $\D=\g  ^{\sl_2}$, and $(\J,\M)$
 is a ternary Jordan algebra. So, this theorem is a consequence of the ``dictionary'' between the operations on $\J$
 and $\M$ and
 the $\sl_2$-Lie structure of $\M$ 
 given by Proposition \ref{propdic1}
 and Theorem
 \ref{Teor: (sJ,sM) as sJ-tern}. 
 \end{Proof}

\begin{Def}
For $\g \in \ShsL$,  we define extend the functor
$\T$ into a functor $\hat\T$ by
\[
\hat{\T}(\g )=
\big(\J(\g ),
\M(\g ),\cdot,\bullet,\star,(-,-,-)\big)
\]
as in Definition \ref{defternary}.
\end{Def}

 We prove the following key Lemma to be used in the
 adjoint property to be proved later:
 
\begin{Lem}\label{Lem: phi-Tg 1}
    Let  $\g$ be a short $\sl_2$-super Lie algebra. For
    $a,b\in \J:=\J(\g)=\{\alpha\in\g:h\alpha=2\alpha\}$
   and 
    $m,n\in \M:=\M(\g)=\{\alpha\in\g:h\alpha=\alpha\}$
   {\em define}
\[
\hskip -1cm \langle-,-\rg:\J\ot\J\to\g^{\sl_2}, 
\hskip 5cm
\pg:\M\ot\M\to\g^{\sl_2}
\]
\[
a\ot b\mapsto\langle a,b\rg:=\tfrac14[fa,fb]\in\g^{\sl_2}
\hskip 1cm
m\ot n\mapsto \pg_{m,n}:=\tfrac12\big([m,fn]-[fm,n]\big)\in\g^{\sl_2}
\]
Then, there is a unique morphism of Lie superalgebras $\phi$
    \[
   \Tag(\hat\T(\g ))=\sl_2\ot\J(\g)\ \oplus \ V\!\ot\M(\g)\ \oplus \ \B^s(\J(\g),\M(\g))\overset{\phi}{\longrightarrow} \g 
    \]
    completely determined by
      \begin{align*}
e\ot a &\mapsto a, \\
e_{1}\ot m &\mapsto m\\
\{a\ot b\} &\mapsto \tfrac{1}{4}[fa,fb]=\langle a,b\rg,\\
 \{m\ot n\} &\mapsto \tfrac{1}{2}\big([m,fn]-[fm,n]\big)=\pg_{m,n}.
	\end{align*}
  
\end{Lem}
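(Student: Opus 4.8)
The plan is to build $\phi$ by separating the three isotypic pieces of $\Tag(\hat\T(\g))$ and to use that a morphism in $\ShsL$ is by definition $\sl_2$-equivariant. I would first settle uniqueness. As $\sl_2$-modules, $\sl_2\ot\J(\g)$ and $V\ot\M(\g)$ are generated by their highest-weight spaces $e\ot\J(\g)$ and $e_1\ot\M(\g)$, while $\B^s(\J(\g),\M(\g))$ is $\sl_2$-trivial and spanned by the classes $\{a\ot b\}$ and $\{m\ot n\}$. Hence any $\sl_2$-equivariant Lie map is determined by its values on $e\ot a$, $e_1\ot m$, $\{a\ot b\}$ and $\{m\ot n\}$, which gives the uniqueness claimed.

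For existence the first task is to make the prescribed map on $\B^s$ well defined. I would introduce the linear map $\psi_0\colon \J(\g)\ot\J(\g)\oplus\M(\g)\ot\M(\g)\to\g^{\sl_2}$ by $a\ot b\mapsto\langle a,b\rg=\tfrac14[fa,fb]$ and $m\ot n\mapsto\pg_{m,n}=\tfrac12([m,fn]-[fm,n])$, and check that $\psi_0$ annihilates every generator of $R^s(\J(\g),\M(\g))$: the relations $I_{a,b}$ and $I_{m,n}$ vanish by the antisymmetry of $\langle-,-\rg$ and the symmetry of $\pg$ (exactly as in Lemma \ref{Lem: Bs-1 JM}); $I_{a,b,c}$ vanishes by \eqref{eq: <ab,c>}; $I_{a,m,n}$ vanishes by \eqref{eq: angle-partial}; and $I_{m,n,r,s}$ vanishes by combining the derivation identity \eqref{eq: [d,partial_{m,n}]} (with $\d=\pg_{r,s}$) with the super-antisymmetry of the bracket of $\g$, which is precisely relation \eqref{ganchogancho}. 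The crucial bridge here is Lemma \ref{rem9} together with Definition \ref{defternary}: the adjoint action of $\langle a,b\rg$ and $\pg_{m,n}$ on $\J(\g)\oplus\M(\g)$ coincides with the inner derivations $\langle a,b\rjm$ and $\pjm_{m,n}$, so the symbol $\pjm_{m,n}(r)$ appearing inside $I_{m,n,r,s}$ is just $[\pg_{m,n},r]$ computed in $\g$. Thus $\psi_0$ descends to $\bar\psi\colon\B^s(\J(\g),\M(\g))\to\g^{\sl_2}$.

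I would then define $\phi$ as the unique $\sl_2$-equivariant linear map whose restriction to the highest-weight spaces is $e\ot a\mapsto a$, $e_1\ot m\mapsto m$ and whose restriction to $\B^s$ is $\bar\psi$; this is well defined because the prescribed images lie in the correct weight spaces ($ha=2a$, $ea=0$ and $hm=m$, $em=0$) and $\B^s$ is $\sl_2$-trivial. It remains to verify that $\phi$ respects the brackets i--x defining $\Tag(\J,\M)$; by $\sl_2$-equivariance it suffices to test each with the $\sl_2$- and $V$-factors specialized to suitable highest-weight combinations. Brackets viii--x reduce to \eqref{eq: Brk-g-3}--\eqref{eq: Brk-g-5} and hold because $\cdot,\bullet,\star$ were defined in Definition \ref{defternary} from the bracket of $\g$ itself; for instance $[e_1\ot m,e_1\ot n]=e\ot(m\star n)$ is sent to $m\star n=[m,n]$. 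Brackets iv--vii reduce, via \eqref{eq: Brk-g-1}--\eqref{eq: Brk-g-2} and the identification of the $\D$-action with inner derivations, to $[\langle a,b\rg,c]=\langle a,b\rjm\act c$ and $[\pg_{m,n},c]=\pjm_{m,n}\act c$. Finally brackets i--iii, which encode the Lie structure of $\B^s$, become after applying $\phi$ exactly the identities \eqref{eq: [d,<a,b>]}, \eqref{eq: [d,partial_{m,n}]} and \eqref{ganchogancho}, supplemented by \eqref{ganchogancho2} of Remark \ref{Prop: deriv-rel}.

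I expect the main obstacle to be the well-definedness on $\B^s$, concretely the vanishing of $\psi_0(I_{a,m,n})$ and $\psi_0(I_{m,n,r,s})$ and the matching of the $\B^s$-brackets i--iii, since this is where the interplay between $\star$, the triple product and the inner-derivation identities is densest and where the Koszul signs must be tracked most carefully. All the relations needed are, however, already recorded in Theorem \ref{Teo: STits-1} and Remark \ref{Prop: deriv-rel}, so no genuinely new computation is required beyond organizing these identities and the sign bookkeeping.
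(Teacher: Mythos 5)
Your proposal is correct and follows essentially the same route as the paper: uniqueness via $\sl_2$-equivariance and generation by the highest-weight spaces, and existence via the dictionary of Proposition \ref{propdic1} and Lemma \ref{rem9} together with the structural identities of Theorem \ref{Teo: STits-1} and the Remark following it. In fact you are more explicit than the paper on the one delicate point, namely that the generators $I_{a,b}$, $I_{m,n}$, $I_{a,b,c}$, $I_{a,m,n}$, $I_{m,n,r,s}$ of $R^s(\J(\g),\M(\g))$ must vanish as \emph{elements} of $\g^{\sl_2}$ (not merely as inner derivations), which you correctly deduce from \eqref{eq: <ab,c>}, \eqref{eq: angle-partial} and from \eqref{eq: [d,partial_{m,n}]} combined with super-antisymmetry of the bracket; the paper's proof leaves this implicit.
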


\begin{rem}The following relation hold
\[
\langle a,b\rjm=\big[\langle a,b\rg,-\big]|_{\J(\g)\oplus\M(\g)}
,\hskip 1cm
\pjm_{m,n}=\big[\pg_{m,n},-\big]|_{\J(\g)\oplus\M(\g)}
\]
\end{rem}

\begin{proof}[Proof of 
Lemma \ref{Lem: phi-Tg 1}]
The Lie bracket in $\Tag(\g)=\sl_2\ot\J(\g)\oplus
V\ot\M(\g)\oplus \B^s(\J(\g),\M(\g))$ is given by
\begin{enumerate}[label=\roman*]
    \item $[\{D\},x\ot a] = x\ot \{D\}\act{a}$,
    \item $[\{D\},v\ot m] = v\ot \{D\}\act{m}$, 
    \item $[x\ot a,y\ot b] = [x,y]\ot a\cdot b + \tfrac{1}{2}\killing(x,y)\{ a,b\}$, \
    \item $[x\ot a,v\ot m] =[x,v]\ot a\bullet m$, 
    \item $[v\ot m,w\ot n] = v\odot w\ot m\star n 
    + \det(v,w)\{m,n\}$, 
    
where $a,b\in\J(\g)$, $m,n\in\M(\g)$, $x,y\in\sl_2$, $v,w\in V$
and $\{D\}\in \B^s(\J(\g),\M(\g))$ is acting by derivations on $\g$
via  $\{a\ot b\}\mapsto\langle a,b\rjm$ and $\{m\ot n\}\mapsto\pg_{m,n}$.

\item $[\{D\},\{z_1\ot z_2\}]=\{\{D\}(z_1)\ot z_2\}+
(-1)^{|D||z_1|}\{z_1\ot \{D\}(z_2)\}$, where $z_1\ot z_2$ 
belongs to $\J\ot\J$ or to $\M\ot\M$.
\end{enumerate}
On the other hand, $\g$ is also short:
\[
\g\cong\sl_2\ot\J(\g)\oplus V\ot\M(\g)\oplus\g^{\sl_2}
\]
so, we know that:
	\begin{align*}
		\g  
		= 
		(\J \oplus \M) \oplus (f\J \oplus f\M) \oplus f^2\J \oplus \g ^{\sl_2} 
		\cong 
		\sl_2\ot\J \oplus \ V\ot\M \ \oplus \ \g ^{\sl_2}
	\end{align*}
Hence, the $\sl_2$-linearity gives that necesarily $\phi$
has to be defined as
  \begin{align*}
		e\ot a \mapsto a, &\quad h\ot a \mapsto -fa,\quad f\ot a \mapsto -\tfrac{1}{2}f^2a,\\
		e_{1}\ot m \mapsto m, &\quad e_{2}\ot m \mapsto fm,\\
		\{a\ot b\} \mapsto \tfrac{1}{4}[fa,fb]=\langle a,b\rg, &\quad \{m\ot n\} \mapsto \tfrac{1}{2}\big([m,fn]-[fm,n]\big)=\pg_{m,n}.
	\end{align*}
It is straighforward to check that $\phi$ is a 
morphism of $\sl_2$-modules.
We know that the map $B^{s}(\J(\g),\M(\g))$
given by $\{a\ot \b\}\mapsto\langle a,b\rjm$ and $\{m\ot n\}\mapsto\pjm_{m,n}$ is a Lie algebra map.
 The rest of the computation that $\phi$ is a Lie algebra map follows from the formulas given in
Proposition \ref{propdic1} and Remark \ref{rem9},
  computing the operations in terms of the bracket and the $\sl_2$ action.
\end{proof}

\begin{rem}\label{Lem: phi-Tg 3}
The morphism $\phi_{\g }:\Tag(\hat\T(\g ))\to \g $ is
defined, by construction, as a 
morphism of $\sl_2$-modules, and being ``the identity'' on $\J$ and $\M$.
\end{rem}

\subsection{Adjunction property of $\T$ and $\Tag$}
\begin{Teor}\label{Teo: adj T-Tag}
	$\Tag:\JterS \to \ShsL$ is the left adjoint of $\hat{\T}:\ShsL \to \JterS$.
\end{Teor}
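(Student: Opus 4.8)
The plan is to exhibit a bijection, natural in $(\J,\M)\in\JterS$ and $\g\in\ShsL$,
\[
\Hom_{\ShsL}\big(\Tag(\J,\M),\g\big)\;\cong\;\Hom_{\JterS}\big((\J,\M),\hat\T(\g)\big),
\]
taking as counit the morphism $\phi_\g:\Tag(\hat\T(\g))\to\g$ constructed in Lemma \ref{Lem: phi-Tg 1}, and as unit the canonical identification $(\J,\M)\xrightarrow{\sim}\hat\T(\Tag(\J,\M))$ coming from the dictionary of Proposition \ref{propdic1}. Indeed, for $\Tag(\J,\M)=\sl_2\ot\J\oplus V\ot\M\oplus\B^s(\J,\M)$ the eigenspace description gives $\J(\Tag(\J,\M))=e\ot\J$ and $\M(\Tag(\J,\M))=e_1\ot\M$, and formulas \eqref{cdotlie}--\eqref{partiallie} say that the recovered operations $\cdot,\bullet,\star,(-,-,-)$ agree with the original ones of $(\J,\M)$, so $a\mapsto e\ot a$, $m\mapsto e_1\ot m$ is an isomorphism in $\JterS$.

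First I would define the two maps. Given $\eta=(\eta_1,\eta_2):(\J,\M)\to\hat\T(\g)$ in $\JterS$, functoriality of the construction $\Tag$ established above yields $\Tag(\eta):\Tag(\J,\M)\to\Tag(\hat\T(\g))$ in $\ShsL$, and I set $\Psi(\eta):=\phi_\g\circ\Tag(\eta)$, a composite of $\ShsL$-morphisms, hence an $\ShsL$-morphism. Conversely, given $F:\Tag(\J,\M)\to\g$ in $\ShsL$, the $\sl_2$-linearity of $F$ forces $F(e\ot\J)\subseteq\J(\g)$ and $F(e_1\ot\M)\subseteq\M(\g)$, these being the $h$-eigenspaces of eigenvalue $2$ and $1$; I set $\Phi(F):=(\eta_1,\eta_2)$ with $\eta_1(a)=F(e\ot a)$ and $\eta_2(m)=F(e_1\ot m)$. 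That $\Phi(F)$ lands in $\JterS$ is because the operations on both $\hat\T(\Tag(\J,\M))$ and $\hat\T(\g)$ are given by the \emph{same} Lie-bracket formulas of Definition \ref{defternary}; since $F$ commutes with the $\sl_2$-action and preserves the bracket, it intertwines $\cdot,\bullet,\star$ and the triple product.

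Next I would check that $\Phi$ and $\Psi$ are mutually inverse. For $\Phi\circ\Psi=\id$, Remark \ref{Lem: phi-Tg 3} states that $\phi_\g$ is the identity on $\J(\g)$ and $\M(\g)$, so restricting $\phi_\g\circ\Tag(\eta)$ to $e\ot\J$ and $e_1\ot\M$ returns exactly $\eta_1$ and $\eta_2$. For $\Psi\circ\Phi=\id$, both $\Psi(\Phi(F))$ and $F$ are $\sl_2$-linear Lie morphisms out of $\Tag(\J,\M)$ that agree on $e\ot\J$ and $e_1\ot\M$ by construction, so it suffices to know that these two subspaces generate $\Tag(\J,\M)$ as an $\sl_2$-Lie superalgebra. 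Finally, naturality of the bijection in both variables is a direct diagram chase from the definitions of $\Phi$, $\Psi$ and the functoriality of $\Tag$ and $\hat\T$.

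The main obstacle is the generation step just invoked: one must verify that $e\ot\J\oplus e_1\ot\M$ generates all of $\Tag(\J,\M)$ — in particular its summand $\B^s(\J,\M)$ — under the bracket and the $\sl_2$-action. This is precisely what the dictionary delivers: by \eqref{langlelie} one has $\{a\ot b\}=\tfrac14[f\wt a,f\wt b]$ and by \eqref{partiallie} one has $\{m\ot n\}=\tfrac12\big([\wt m,f\wt n]-[f\wt m,\wt n]\big)$, so every generator of $\B^s(\J,\M)$ is a bracket of $\sl_2$-translates of elements of $e\ot\J$ and $e_1\ot\M$. Hence any two $\sl_2$-linear Lie morphisms agreeing on $e\ot\J\oplus e_1\ot\M$ must agree on $\B^s(\J,\M)$, which closes the uniqueness argument. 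The genuine work has already been done in Lemma \ref{Lem: phi-Tg 1} and in the functoriality of $\Tag$; what remains here is this formal-but-essential uniqueness verification together with the routine naturality check.
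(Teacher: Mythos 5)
Your proposal is correct and follows essentially the same route as the paper: the bijection is the restriction map $F\mapsto F|_{e\ot\J,\,e_1\ot\M}$ (the paper's $\mu$), inverted by $\eta\mapsto\phi_\g\circ\Tag(\eta)$ using the counit of Lemma \ref{Lem: phi-Tg 1} and functoriality of $\Tag$, with injectivity resting on the fact that $\sl_2\ot\J\oplus V\ot\M$ generates $\Tag(\J,\M)$. Your only addition is to justify that generation step explicitly via the dictionary formulas \eqref{langlelie} and \eqref{partiallie}, which the paper merely asserts; this is a welcome (and correct) elaboration, not a different argument.
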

\begin{Proof}
	We want to prove that:
	\begin{align*}
		\Hom_{\JterS}\big((\J,\M),\hat{\T}(\g )\big) \cong \Hom_{\ShsL}\big(\Tag(\J,\M),\g \big)
	\end{align*}
	for any $(\J,\M)\in \JterS$, $\g \in \ShsL$. Note that $\hat{\T}(\Tag(\J,\M))=(e\ot \J,e_1\ot\M)\cong(\J,\M)$, so there exists a natural morphism
	\begin{align*}
		\mu: \Hom_{\ShsL}\big(\Tag(\J,\M),\g \big) \to \Hom_{\JterS}\big((\J,\M),\hat{\T}(\g )\big)
	\end{align*}
If we identify $e\ot\J\cong \J$ and $e_1\ot \M\cong \M$ then this morphism is just the restriction $\eta \to \eta|_{\J,\M}$. Since $\Tag(\J,\M)$ is generated by $\sl_2\ot\J$ and $V\ot \M$, it follows that $\mu$ is injective. 
	
	On the other hand, let $\phi:(\J,\M) \to (\wt\J,\wt\M)$ be a morphism in $\JterS$, with $(\wt\J,\wt\M)=\Tag(\g )$. By the functoriality of $\Tag$, there exist a morphism of Lie superalgebras
	\begin{align*}
		\Tag(\phi): \Tag(\J,\M) \to \Tag(\wt\J,\wt\M)
	\end{align*}
which also is $\sl_2$-linear. By Lemma 
\ref{Lem: phi-Tg 1}, there exist a canonical morphism of
 Lie superalgebras 
 $\phi_{\g }:\Tag(\hat{\T}(\g )) \to \g $ 
which is $\sl_2$-linear. Hence, $\phi_{\g }\circ \Tag(\phi)$ is
 a morphism in $\ShsL$ that extends $\phi$, so that 
 $\phi = \mu\big(\phi_{\g }\circ \Tag(\phi)\big)$ and 
 $\mu$ is injective.
\end{Proof}

\subsection{Some Examples}

\begin{ex}
Let $\J=0$ and $\M=\C m_0$ with $|m_0|=0$. 
All operations
(including the triple product and
$\pjm_{m_0,m_0}$) are necessarily trivial. 
Hence, $\InnDer(0,\C m_0)=0$ and the TKK construcion 
gives the abelian  Lie algebra $V\ot m_0$.
On the other hand,  
$\B^s(0,\C m_0)=\C\{m_0\ot m_0\}$, and the TAG construction
gives
\begin{align*}
\Tag(0,\C m_0)= V\ot m_0\oplus \C \{m_0\ot m_0\}
&\cong V\oplus \C z =\h_3\\
v\ot m_0&\leftrightarrow v\\
\{m_0\ot m_0\}&\leftrightarrow z
\end{align*}
with bracket
\[
[v\ot m_0,w\ot m_0]=\det(v,w)\{m_0\ot m_0\}
\leftrightarrow [v,w]=\det(v,w)z
\]
And we recover the 3-dimensional Heisenberg
Lie algebtra  $\h_3$.
\end{ex}

\begin{ex}
	Let $\J =J_0= \C^{1|0}=\M=M_0$ with the operations
 $1\cdot 1=1$, $1\bullet 1=1$ and $1\star 1=0$. 
 The axiom \eqref{SJT6} implies that the only triple product
  on $\J$ is $(1,1,1)=0$ and, the associative structure of the
   operations show that $\B^{s}(\J,\J)=0$. Hence, 
$\Tag(\J,\J) = (\sl_2 \oplus V)\ot\J$ is the short $\sl_2$-Lie
 superalgebra with non trivial brackets defined by:
	\begin{align*}
		[x\ot 1,y\ot 1] = [x,y]\ot 1,\quad [x\ot 1,v\ot 1] = x(v)\ot 1
	\end{align*}
	Hence, 
$\Tag\big(
\J,\J,\cdot,\bullet,\star=0,(-,-,-)=0
\big)=\sl_2\ltimes V_{(1)}=\sl_2\ltimes V$.
\end{ex}

\begin{ex}
Let $\J = \C^{1|0}=\C 1$ and
 $\M = \C^{0|1}=\C\wt 1$ with the
 operations $1\cdot 1=1$, $1\bullet \wt1=\wt1$ and 
 $\wt1\star \wt1=1$. The axiom \eqref{SJT3} implies that
  the only non-zero triple product on $\M$ is 
  $(\wt1,\wt1,\wt1)=\tfrac{1}{2}\wt1$.
 The super symmetry relations shows that
   $\B^{s}(\J,\M)=0$: 
\[
I_{1,1}= 1\ot 1 + (-1)^{|1||1|}1\ot 1
\]
\[
I_{\wt 1,\wt 1}= \wt 1\ot \wt 1 -(-1)^{|\wt 1||\wt 1|}\wt 1\ot \wt 1
\]
\[
 \To \{1\ot 1\}= 0=\{\wt 1\ot \wt 1\}
 \]
Hence,  $\B^s(\J,\M)=0$ 
and  $\Tag(\J,\M) = \sl_2\ot\J \; \oplus\; V\ot\M$ is 
   isomorphic to the
  the orthosymplectic Lie superalgebra $\osp(1,2)$ (see \cite{Me2017,Kac78}), with  brackets given by
\[
\begin{array}{rcl}
\Tag(\J,\M) = \sl_2\ot\J \; \oplus\; V\ot\M&\cong &\sl_2\oplus V=\osp(1,2)\\
\sl_2\ot \C^{1|0}\ni x\ot 1 &\leftrightarrow &x\\
V\ot \C^{0|1}\ni v\ot \wt 1 &\leftrightarrow &v\\
{}[x\ot 1,y\ot 1] = [x,y]\ot 1 &\leftrightarrow&{} [x,y]=[x,y]_{\sl_2}\\
 {}[x\ot 1,v\ot \wt1] = x(v)\ot \wt1 &\leftrightarrow &{}[x,v]=x(v)
 \\
{}[v\ot \wt1,w\ot \wt1] = v\odot w\ot 1
&\leftrightarrow& [v,w]=v\odot w
\end{array}
\]
\end{ex}

\begin{ex}\label{ex-sl3}
    Let $E_{ij}$ be the elementary matrices in $M_{3}(\C)$ and $\g = \sl_3$ the semisimple Lie algebra with an ordered basis:
    \[
        \{E_{12}, E_{13}, E_{23}, H_{12} = E_{11}-E_{22}, H_{23} = E_{22}-E_{33}, E_{21}, E_{31}, E_{32}\}
    \]
    By choosing $e=E_{12}$, $f=E_{21}$ and $h=E_{12}$, we have that $\sl_3$ is a $\sl_2$-Lie algebra via adjoint representation and then:
    \begin{align}
	\sl_3 = V_{2} \;\oplus\; V_{1}^1 \oplus V_{1}^2 \;\oplus\; V_{0}
    \end{align}
    where each irreducible $\sl_2$-module, in term of weight basis, is defined by:
    \begin{itemize}
	\item $V_{0} = \langle E_{11}+E_{22}-2E_{33}\rangle$,
		
	\item $V_{2} = \langle E_{12},-H_{12},-2E_{21}\rangle$,
		
	\item $V_{1}^1 = \langle E_{13},E_{23}\rangle$, $ V_{1}^2 = \langle E_{32},-E_{31}\rangle$.
    \end{itemize}
    Define $J:=\langle a\rangle$ and $M:=\langle m_1,m_2\rangle$ for $a=E_{12}$, $m_1=E_{13}$ and $m_2=E_{32}$. A direct computation gives:
    \begin{equation}
	\begin{aligned}
		[E_{12},H_{12}] = -2E_{12},\;\;
		[E_{12},E_{23}] = E_{13},\;\;
		[E_{12},-E_{31}] = E_{32},\;\;
		[E_{13},E_{32}] = E_{12},
	\end{aligned}
    \end{equation}
    and then, the operations \eqref{eq: operations on J+M} results:
    \begin{equation}\label{eq: oper on JM from sl3}
	\begin{aligned}
		a\cdot a = a,\;\;
		a\bullet m_1 = m_1,\;\;
		a\bullet m_2 = m_2,\;\;
		m_1\star m_2 = a.
	\end{aligned}
    \end{equation}
    The triple product $(-,-,-)$ defined in \eqref{eq: operations on J+M} implies that:
    \begin{equation}\label{eq: tr_prod on M from sl3}
	\begin{aligned}
		(m_1,m_2,m_1) &= 2m_1,&\quad (m_1,m_2,m_2) &= -m_2\\
		(m_2,m_1,m_1) &= m_1,&\quad (m_2,m_1,m_2) &= -2m_2
	\end{aligned}
    \end{equation}
    From eq. \eqref{eq: partial_{m,n} brk-1} we have that the only non-zero derivation is (in basis $\{a,m_1,m_2\}$):
    \begin{equation}
	\partial_{12} = 
	\left(\begin{smallmatrix}
		0 & 0 & 0\\
		0 &-3/2 & 0\\
		0 & 0 & 3/2
	\end{smallmatrix}\right),
    \end{equation}
    where $\partial_{12}=\partial_{m_1,m_2}$. It is straightforward to see that $R^s(J,M)$ is generated by $a\ot a$, $m_1\ot m_1$ and $m_2\ot m_2$, which implies that $B^s(J,M)$ is generated by $\{m_1\ot m_2\}$ and $B^{s}(J,M)$ is isomorphic to $\C$. 
    
    Therefore, $\hat{\T}(\sl_3)$ is the $J$-ternary algebra $(J,M)$ with operations \eqref{eq: oper on JM from sl3} and triple product \eqref{eq: tr_prod on M from sl3}, and $\Tag(\hat{\T}(\sl_3))$ is the Lie algebra
    \[
        \Tag(\hat{\T}(\sl_3)) = \sl_2 \; \oplus \; V\ot M \; \oplus \; \C \{m_1\ot m_2\}.
    \]
    isomorphic to $\sl_3$.
\end{ex}

\begin{ex}Let $\h_3$ be the 3-dimensional Heisenberg Lie algebra. 
This algebra is short, $\h_3=V\oplus \C z$, but 
also the complex $\Lambda \h_3\ot\Lambda\h_3^*$
 is a (differential graded)
short $\sl_2$-super Lie algebra:
\[
\Lambda \h_3^*\ot\Lambda\h_3=\Big(\bigoplus_{p,q=0}^3\Lambda^p\h_3^*\ot \Lambda^q\h_3,\;d_{CE}\Big)
\]
Moreover,
$H^{\bullet,\bullet}(\h_3):=H^\bullet(\h_3,\Lambda \h_3)=$ 
 the cohomology of this d.g. Lie algebra is a short
$\sl_2$-super Lie algebra, with
$\J=\Lambda(\C z\oplus\C z^*)$
 a Jordan algebra of dimension 4, 
$\dim\M=16$, and $\dim \D=20$.

\begin{align}\label{eq: short desc of H(h3)}
    H^{\bullet,\bullet}(\h_3)\cong V_{(2)}\ot H^\bullet(\J)\; \oplus\; V_{(1)}\ot H^\bullet(\M)\; \oplus\; V_{(0)}\ot H^\bullet(\D)
\end{align}
\[\h_3=V\oplus\C z\]
\[
\h_3^*=V^*\oplus\C z^*\]
\[
\Lambda \h_3=\underbrace{\Lambda ^2V\wedge z}_{\Lambda ^3\h_3} 
\oplus
\underbrace{
\big( \Lambda^2 V
\oplus
 V\wedge z \big)}_{\Lambda ^2\h_3}
\oplus 
\underbrace{
\big(V\oplus \C z\big)}_{\h_3}
\oplus \C\]
and similarly
\[
\Lambda \h^*_3=\Lambda ^2V^*\wedge z^*\oplus 
\big( \Lambda^2 V^*
\oplus V^*\wedge z^* \big)\oplus \big(V^*\oplus \C z^*\big)\oplus \C\]
Notice that $\Lambda^2V$ and its dual are trivial
 $\sl_2$-modules. As representations of $\sl_2$
  one can rearrange

\[
\Lambda \h_3=
\underbrace{\big(\Lambda ^2V\wedge z
\oplus
 \Lambda^2 V
\oplus
\C z
\oplus \C\big)}_{\hbox{trivial $\sl_2$-module}}
\oplus
\big( 
 V\wedge z 
\oplus 
V 
\big)\]
\[
\Lambda \h_3 ^*=
\big(\Lambda ^2V^*\wedge z^*
\oplus
 \Lambda^2 V^*
\oplus
\C z^*
\oplus \C\big)
\oplus
\big( 
 V^*\wedge z^* 
\oplus 
V^* 
\big)
\]
and finally
\[
\begin{array}{rclc}
\Lambda\h_3^*\ot\Lambda \h_3&=&
\big(\Lambda ^2V^*\wedge z^*
\oplus
 \Lambda^2 V^*
\oplus
\C z^*
\oplus \C\big)
\ot
\big(\Lambda ^2V\wedge z
\oplus
 \Lambda^2 V
\oplus
\C z
\oplus \C\big)
&trivial\\
&&
\oplus
\big(\Lambda ^2V^*\wedge z^*
\oplus  \Lambda^2 V^* \oplus \C z^* \oplus \C\big)
\ot \big(V\wedge z\oplus V)
&V_{(1)} \\
&&\oplus
\big(\Lambda ^2V\wedge z
\oplus
 \Lambda^2 V
\oplus
\C z
\oplus \C\big)
\ot \big(V^*\wedge z^*\oplus V^*)
&V_{(1)} \\
&&
\oplus
\big(V\wedge z\oplus V)
\ot \big(V^*\wedge z^*\oplus V^*)
&V_{2} \hbox{ and }trivial\end{array}
\]
notice
\[
\big(V\wedge z\oplus V)
\ot \big(V^*\wedge z^*\oplus V^*)
=V\ot (\C\oplus \C z)\ot 
V^*\ot (\C\oplus \C z^*)
\cong
\underbrace{V\ot V^*}_{\cong\gl(V)=\sl(V)\oplus\C \Id_V}\ot\Lambda(\C z\oplus\C z^*)
\]
\[
\cong \big(\sl(V)\oplus\C \Id_V\big)\ot\Lambda(\C z\oplus \C z^*)
\]
\[
\cong\sl(V)\ot \Lambda(\C z\oplus \C z^*)
\oplus 
\id_V\ot \Lambda(\C z\oplus \C z^*)
\]
\[
\cong
\underbrace{\sl(V)\ot\Lambda(\C z\oplus \C z^*)}
_{V_{(2)}-type}
\oplus 
\underbrace{\Lambda(\C z\oplus \C z^*)}_{trivial}
\]
From this decomposition one can get the short structure. As
a matter of example we mention that
$\J=\Lambda(\C z\oplus \C z^*)=\C\oplus \C z\oplus \C z^*
\oplus \C z^*\wedge z$, 
the associative super commutative algebra
 with $|1|=0$ and $|z|=|z^*|=1$, in particular,
$\J$ is a Jordan superalgebra. The Jordan module $\M$
is a free (associative) module over $\J$ of rank 4 ($\dim\M=16$), and $\D$ is 20-dimensional.

Since the complex is short, its homology is also short, and
\[
H ^2(\h_3,\C)\oplus H^1(\h_3,\h_3)\oplus H ^0( \h_3,\Lambda ^2\h_3)
\]
is a Lie subalgebra of the total cohomology, hence it is short.
In \cite{CG25} a basis of the cohomology is computed,
also the Lie structure is computed  explicitely. Here we give another ``explanation'' of the Lie algebra structure founded
therein.
\end{ex}

\begin{prop}
There is a nontrivial map of Lie algebras (hence an isomorphism)
\[
\sl_3(\C)\to H^2(\h_3,\C)
\oplus H^1(\h_3,\h_3)\oplus
H^0(\h_3,\Lambda^2\h_3)=H^{2,0}\oplus H^{1,1}
\oplus H^{0,2}
\]
\end{prop}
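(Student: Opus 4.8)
The plan is to use the functorial machinery of Section~\ref{Sec: Tag(J,M)-T(g)}: I will realize $\g_{\mathrm{coh}}:=H^{2,0}\oplus H^{1,1}\oplus H^{0,2}$ as a short $\sl_2$-Lie algebra, compute its associated $\J$-ternary algebra $\hat\T(\g_{\mathrm{coh}})$, identify it with $\hat\T(\sl_3)$ from Example~\ref{ex-sl3}, and then transport the isomorphism $\Tag(\hat\T(\sl_3))\cong\sl_3$ along the canonical morphism of Lemma~\ref{Lem: phi-Tg 1}.

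First I would pin down the $\sl_2$-structure. Recall $\sl_2$ acts on $\h_3=V\oplus\C z$ with $V=V_{(1)}$ and $z$ trivial, so it acts on the whole bigraded cohomology. A direct Chevalley--Eilenberg computation gives $\dim H^{2,0}=\dim H^{0,2}=2$ and $\dim H^{1,1}=4$, whence $\dim\g_{\mathrm{coh}}=8=\dim\sl_3$; the induced bracket has bidegree $(-1,-1)$, which is why the total-degree-$2$ part $\g_{\mathrm{coh}}$ is closed under bracket, as already asserted. As $\sl_2$-modules one finds $H^{2,0}\cong H^{0,2}\cong V_{(1)}$ and $H^{1,1}\cong V_{(2)}\oplus V_{(0)}$, so
\[
\g_{\mathrm{coh}}\cong \sl_2\ot J'\ \oplus\ V\ot M'\ \oplus\ D',\qquad \dim J'=1,\ \dim M'=2,\ \dim D'=1,
\]
which is exactly the isotypic shape of $\hat\T(\sl_3)$.

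Next, taking highest-weight representatives $\tilde a=[e_2^*\ot e_1]\in H^{1,1}$ (weight $2$), $m_1'=[e_2^*\wedge z^*]\in H^{2,0}$ and $m_2'=[e_1\wedge z]\in H^{0,2}$ (both weight $1$), I would evaluate the operations of Definition~\ref{defternary} through the dictionary of Proposition~\ref{propdic1} on the induced bracket of $H^{\bullet,\bullet}(\h_3)$. After normalizing the representatives this should yield $a'\cdot a'=a'$, $a'\bullet m_i'=m_i'$ and $m_1'\star m_2'=a'$ as in \eqref{eq: oper on JM from sl3}, together with the triple product $(m,n,r)=-[[m,fn],r]$ reproducing \eqref{eq: tr_prod on M from sl3}; hence $\hat\T(\g_{\mathrm{coh}})\cong\hat\T(\sl_3)$ in $\JterS$. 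This identification is the main obstacle, since it requires the explicit Lie bracket on the bigraded cohomology (computed in \cite{CG25}); the mitigating fact is that nearly all structure constants are forced by the few nonvanishing brackets, so only a small number of classes must actually be paired.

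Finally I would apply the functor $\Tag$ to the isomorphism $\hat\T(\g_{\mathrm{coh}})\cong\hat\T(\sl_3)$ and use $\Tag(\hat\T(\sl_3))\cong\sl_3$ (Example~\ref{ex-sl3}) to obtain $\Tag(\hat\T(\g_{\mathrm{coh}}))\cong\sl_3$. Composing with the canonical morphism $\phi_{\g_{\mathrm{coh}}}:\Tag(\hat\T(\g_{\mathrm{coh}}))\to\g_{\mathrm{coh}}$ of Lemma~\ref{Lem: phi-Tg 1}, which is the identity on $J'$ and $M'$ and therefore nonzero, produces a nontrivial Lie algebra homomorphism $\Phi:\sl_3\to\g_{\mathrm{coh}}$. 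Since $\sl_3$ is simple, $\ker\Phi$ is a proper ideal, hence $0$, so $\Phi$ is injective; as $\dim\sl_3=8=\dim\g_{\mathrm{coh}}$, it is an isomorphism. (Equivalently, the same map arises directly from the adjunction of Theorem~\ref{Teo: adj T-Tag} applied to the $\J$-ternary isomorphism above.)
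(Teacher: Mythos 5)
Your proposal is correct and follows essentially the same route as the paper: both realize $H^{2,0}\oplus H^{1,1}\oplus H^{0,2}$ as a short $\sl_2$-Lie algebra, compute its $\J$-ternary structure on weight representatives via the dictionary of Proposition \ref{propdic1}, identify the result with the ternary algebra of $\sl_3$ from Example \ref{ex-sl3}, and then use $\Tag$ together with the canonical morphism of Lemma \ref{Lem: phi-Tg 1} to produce the nontrivial map. The only difference is that you make explicit the final step (injectivity from simplicity of $\sl_3$ plus the dimension count $8=8$) which the paper leaves implicit.
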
 

\begin{proof}
$H^\bullet(\h_3,\Lambda \h_3)$ is a short super Lie algebra with the Poisson bracket $\{-,-\}$ (see \cite{CG25}), hence
$ H^2(\h_3,\C)
\oplus H^1(\h_3,\h_3)\oplus
H^0(\h_3,\Lambda^2\h_3)$ is also short, 
because it is a Lie subalgebra. 
If one computes the cohomology, we get that each $\sl_2$-module in terms of weight basis is given by:
\begin{align*}
    H^2(\h_3,\C) &= \langle e^2z^0,\; -e^1z^0\rangle\\
    H^1(\h_3,\h_3) &= \langle e^2\ot e_1=e,\
    e^2\ot e_2 - e^1\ot e_1=-h,\
    \; -2e^1\ot e_2=-2f\rangle \;\oplus\\
    &\quad \langle e^2\ot e_2 + e^1\ot e_1+2z^*\ot z = \id_0 \rangle\\
    &=\sl_2 \;\oplus\; \C\cdot \id_0 \\
    H^0(\h_3,\Lambda^2\h_3) &= \langle e_1z,\; e_2z\rangle
\end{align*}

\noindent Notice that it contains a copy of $\sl_2$ in $H^1(\h_3,\h_3)$. Hence, using the operations \eqref{eq: operations on J+M} with the Poisson bracket, we obtain:
\begin{align}
    (e^2\ot e_1)\cdot (e^2\ot e_1) &= -e^2\ot e_1\\
    (e^2\ot e_1)\bullet e^2z^* &= -e^2z^*\\
    (e^2\ot e_1)\bullet e_1z &= -e_1z\\
    e^2z^*\star e_1z &= -e^2\ot e_1
\end{align}
\begin{equation}\label{eq: tr_prod H(h3)-1}
    \begin{aligned}
    (e^2z^*,e_1z,e^2z^*) &= 2e^2z^*,\quad (e^2z^*,e_1z,e_1z) = -e_1z\\
    (e_1z,e^2z^*,e^2z^*) &= e^2z^*,\quad (e_1z,e^2z^*,e_1z) = -2e_1z
    \end{aligned}
\end{equation}

\noindent Define $J := \langle a\rangle$ and $M := \langle m_1,m_2\rangle$ for $a = -e^2\ot e_2$, $m_1 = e^2z^0$ and $m_2 = e_1z_0$. So, the operations $\cdot,\bullet$ and $\star$ coincide with \eqref{eq: oper on JM from sl3} and the triple product $(-,-,-)$ coincides with \eqref{eq: tr_prod on M from sl3} of Example \ref{ex-sl3}. Hence $\Tag(J,M)$ generates a copy of $\sl_3$.

\end{proof}

% ------ Bibliography ------ %

\nocite{*}

%\bibliographystyle{amsalpha}
%\bibliography{bibliografia}

\begin{thebibliography}{BCMCE24}

\bibitem[AG96]{AllGao96}
B.N. Allison and Y.~Gao, \emph{{C}entral quotients and coverings of {S}teinberg
  unitary {L}ie algebras}, Can. J. Math \textbf{48} (1996), 449--482.

\bibitem[All76]{All76}
B.N. Allison, \emph{{A} construction of {L}ie algebras from {J}-ternary
  algebras}, Amer. J. Math \textbf{98} (1976), no.~2, 285--294.

\bibitem[BC18]{BC18}
S.~Barbier and K.~Coulembier, \emph{{O}n structure and {TKK} algebras for
  {J}ordan superalgebras}, Commun. Algebra \textbf{46} (2018), 684--704.

\bibitem[BCMCE24]{EBCC23}
P.~Beites, A.~Córdova-Martínez, I.~Cunha, and A.~Elduque, \emph{{S}hort
  $\big({\SL}_2 \times {\SL}_2\big)$-structures on {L}ie algebras}, Rev. Real
  Acad. Cs Ex. Fis Nat. \textbf{118} (2024).

\bibitem[CG25]{CG25}
Leandro Cagliero and Gonzalo Gutierrez, \emph{{G}-tables and the {P}oisson
  structure of the even cohomology of cotangent bundle of the {H}eisenberg
  {L}ie group}, Journal of Algebra \textbf{674} (2025), 205--234.

\bibitem[EO11]{EO11}
A.~Elduque and S.~Okubo, \emph{{S}pecial {F}reudenthal-{K}antor triple systems
  and {L}ie algebras with dicyclic symmetry}, Proc. Roy. Soc. Edinburgh Sect
  \textbf{141} (2011), 1225--1262.

\bibitem[GW98]{GW98}
R.~Goodman and N.~Wallach, \emph{{R}epresentations and invariants of the
  {C}lassical groups}, Cambridge University Press, 1998.

\bibitem[Hei83]{Hein83}
W.~Hein, \emph{{O}n the structure of reduced {J}-ternary algebras of degree
  two}, Journal of Algebra \textbf{82} (1983), no.~1, 157--184.

\bibitem[Kac77]{Kac78}
V.~Kac, \emph{{L}ie superalgebras}, Advances in Mathematics \textbf{26} (1977),
  no.~1, 8--96.

\bibitem[KM21]{Ka19}
I.~Kashuba and O.~Mathieu, \emph{{O}n the {F}ree {J}ordan {A}lgebras}, Advances
  in Mathematics \textbf{383} (2021).

\bibitem[Mar17]{M-17}
M.~E. Martin, \emph{Classification of three-dimensional {J}ordan
  superalgebras}, https://arxiv.org/abs/1708.01963, 2017.

\bibitem[Mey17]{Me2017}
Philippe Meyer, \emph{{C}lassification of finite-dimensional {L}ie
  superalgebras whose even part is a three-dimensional simple {L}ie algebra
  over a field of characteristic not two or three},
  https://arxiv.org/abs/1709.02947, 2017.

\bibitem[PS19]{PS19}
V.~Petrogradsky and I.~Shestakov, \emph{On {J}ordan doubles of slow growth of
  {L}ie superalgebras}, https://arxiv.org/abs/1806.10485v2, 2019.

\bibitem[Sha22]{S22}
Shikui Shang, \emph{{T}he {$\Z_2$}-graded dimensions of the free {J}ordan
  superalgebra {$\cJ$}({D}1,{D}2)}, https://arxiv.org/abs/2211.09393v1, 2022.

\bibitem[Smi11]{Smi11}
O.N. Smirnov, \emph{{I}mbedding of {L}ie triple systems into {L}ie algebras},
  Journal of Algebra \textbf{342} (2011), no.~1, 1--12.

\bibitem[Sta22]{Sta22}
R.~Stasenko, \emph{Short $\sl_2$-structures on simple {L}ie algebras},
  https://arxiv.org/abs/2206.13735, 2022.

\bibitem[Tit62]{Tits}
J.~Tits, \emph{{U}ne classe d'algebres de {L}ie en relation avec les algebres
  de {J}ordan}, Indag. Math \textbf{24} (1962), 530--534.

\end{thebibliography}

\end{document}